\gdef\@journal{%
  \vbox to 5.5\p@{\noindent
    \parbox[t]{\textwidth}{\raggedleft\normalfont\footnotesize\baselineskip 9\p@{\ \\}
  \vss}%
}}
\renewcommand\singlebox{\hskip1000pt minus 1fil\relax} 
\renewcommand\esinglebox{\hskip1000pt minus 1fil \llap{\proofbox}}
\newtheorem{theorem}{Theorem}[section] 
\newtheorem{lemma}[theorem]{Lemma}     
\newtheorem{corollary}[theorem]{Corollary}
\newtheorem{proposition}[theorem]{Proposition}
\title[Energy measures, indices, and derivatives]%
{Energy measures and indices of Dirichlet forms,\\ with applications to derivatives on some fractals}
\author{Masanori Hino}
\let\Bbb\mathbb 
\def\rom#1{\mbox{\leavevmode\skip@\lastskip\unskip\/%
           \ifdim\skip@=\z@\else\hskip\skip@\fi{\rm{#1}}}}
\newcommand{\Thm}[1]{Theorem~\ref{th:#1}}
\newcommand{\Prop}[1]{Proposition~\ref{prop:#1}}
\newcommand{\Lem}[1]{Lemma~\ref{lem:#1}}
\newcommand{\Defn}[1]{Definition~\ref{def:#1}}
\newcommand{\Eq}[1]{\rom{(\ref{eq:#1})}}
\renewcommand{\a}{\alpha}
\newcommand{\gm}{\gamma}
\newcommand{\eps}{\varepsilon}\newcommand{\zt}{\zeta}
\newcommand{\lm}{\lambda}\newcommand{\kp}{\kappa}
\newcommand{\sg}{\sigma}
\newcommand{\ph}{\varphi}
\newcommand{\om}{\omega}
\newcommand{\Sg}{\Sigma}
\newcommand{\bfa}{{\boldsymbol a}}
\newcommand{\bfb}{{\boldsymbol b}}
\newcommand{\bfr}{{\boldsymbol r}}
\newcommand{\bfz}{{\boldsymbol 0}}
\newcommand{\bfone}{{\boldsymbol 1}}
\newcommand{\C}{{\Bbb C}}
\newcommand{\N}{{\Bbb N}}
\newcommand{\Q}{{\Bbb Q}}
\newcommand{\R}{{\Bbb R}}
\newcommand{\Z}{{\Bbb Z}}
\newcommand{\cB}{{\cal B}}
\newcommand{\cC}{{\cal C}}
\newcommand{\cD}{{\cal D}}
\newcommand{\cE}{{\cal E}}
\newcommand{\cF}{{\cal F}}
\newcommand{\cH}{{\cal H}}
\newcommand{\cI}{{\cal I}}
\newcommand{\cK}{{\cal K}}
\newcommand{\cL}{{\cal L}}
\newcommand{\cM}{{\cal M}}
\newcommand{\cN}{{\cal N}}
\newcommand{\cP}{{\cal P}}
\newcommand{\cS}{{\cal S}}
\newcommand{\cFz}{{\cal F}_{\text{\rm dom}}}
\newcommand{\la}{\langle}\newcommand{\ra}{\rangle}
\newcommand{\longto}{\longrightarrow}
\newcommand{\maruM}{{\stackrel{\circ}{\smash{\cal M}\rule{0pt}{1.3ex}}}}
\newcommand{\nab}{\nabla}
\newcommand{\rank}{\mathop{\mathrm{rank }}\nolimits}
\newcommand{\Id}{{\rm Id}}
\providecommand{\esssup}{\mathop{\rm ess\,sup}}
\providecommand{\supp}{\mathop{\rm supp}}
\providecommand{\Osc}{\mathop{\rm Osc}}
\providecommand{\Cp}{\mathop{\rm Cap}\nolimits}
\begin{document}
\maketitle
\begin{abstract}
We introduce the concept of index for regular Dirichlet forms by means of energy measures, and discuss its properties.
In particular, it is proved that the index of strong local regular Dirichlet forms is identical with the martingale dimension of the associated diffusion processes.
As an application, a class of self-similar fractals is taken up as an underlying space. We prove that first-order derivatives can be defined for functions in the domain of the Dirichlet forms and their total energies are represented as the square integrals of the derivatives.
\end{abstract}

\section{Introduction}
The concept of dimensions with regard to stochastic processes has been studied in various contexts. 
For example, Motoo and Watanabe~\cite{MW64} considered a class $\cM$ of martingale additive functionals  of general Markov processes, and proved that there exists a basis $\{x_n\}$ of $\cM$ such that every element in $\cM$ can be represented as a sum of stochastic integrals based on $\{x_n\}$ and a discontinuous part.
This is a broad extension of the study by Ventcel'~\cite{V61}, where the Brownian motion on $\R^d$ was considered.
The number of elements constituting the basis is sometimes called the {\em martingale dimension}, which coincides with the usual dimension of the underlying state space in typical cases.
Some related arguments are found in the papers by Kunita and Watanabe~\cite{KW67}, Cram\'er~\cite{Cr64}, and so on.
Later, as a variation, Davis and Varaiya~\cite{DV74} introduced  the concept of {\em multiplicity of filtration} in a more abstract setting, that is, on filtered probability spaces.

As another variation of these studies, in the first part of this article, we introduce the concept of {\em index} of regular Dirichlet forms $(\cE,\cF)$ using energy measures of functions in $\cF$. This definition is purely analytic.
We then prove some of its properties; among others, we describe the identification of the index with the martingale dimension of the  diffusion process associated with $(\cE,\cF)$ when the Dirichlet form is strong local.
This fact has been proved by Kusuoka~\cite{Ku89,Ku93} when the underlying space is a self-similar fractal (although the original definition of the index is equivalent to but different from that in this article).
Our result is regarded as a natural generalization of Kusuoka's one.

In the latter part of this article, we focus on the case that the underlying space $K$ is a fractal set.
In such a case, it is difficult to obtain an explicit value of the index (in other words, the martingale dimension) of canonical Dirichlet forms, unlike the case of spaces with differential structures.
Thus far, the only result that has been obtained is that Dirichlet forms $(\cE,\cF)$ associated with regular harmonic structures on post-critically finite (p.c.f.) self-similar fractals with some extra assumptions have an index of one~(\cite{Ku89,Hi08}).
As an application of this fact, we prove that every function $f$ in $\cF$ has a ``first-order derivative'' almost everywhere with respect to a minimal energy-dominant measure, and the total energy of $f$ (namely, $2\cE(f,f)$) is described as the square integral of the derivative. 
This representation appears as if the Dirichlet form were defined on a one-dimensional object, which can be regarded as a reflection of the property that the index is one.
Here, of course, derivatives in the usual sense do not exist since the underlying space $K$ does not have a differential structure. 
Instead, we take a good reference function $g$ from $\cF$ and define the derivative of $f\in\cF$ as the infinitesimal ratio of the fluctuation of two functions $f$ and $g$. 
From another viewpoint, we have a Taylor expansion of $f$ with respect to $g$ up to the first order.
The concept that the total energy of $f$ can be described as a square integral of a type of vector-valued gradient was first introduced by Kusuoka~\cite{Ku89,Ku93} (see also \cite{Te00} for a related work).
The contribution of this paper is the observation that only one good reference function is sufficient for the study of the infinitesimal behaviour of every function in $\cF$ and almost every point in $K$ when the index is one.
Recently, Pelander and Teplyaev~\cite{PT08} studied a topic similar to the one addressed in this article and proved the existence of derivatives of a type of smooth function on some self-similar fractals.
In their results, the existence of the derivative is assured almost everywhere with respect to self-similar measures, while in this paper, the derivative is proved to exist almost everywhere with respect to some energy measures. 
Since self-similar measures and energy measures are mutually singular in many cases~(\cite{Hi05,HN06}), these results are not comparable.

As a general theory, a good reference function can be taken from a certain dense subset of $\cF$. 
Exactly what function we can take is not obvious.
We will provide a partial answer to this problem; in the case of non-degenerate generalized Sierpinski gaskets, we prove that every nonconstant harmonic function can be taken as a reference function. As a byproduct of this result, the energy measures of arbitrary nonconstant harmonic functions are mutually absolutely continuous.
This fact is new even for the case of a two-dimensional standard Sierpinski gasket.

This article is organized as follows. 
In section~2, we introduce the concept of index of regular Dirichlet forms and study its properties.
In section~3, the index is characterized in terms of the martingale dimension when the Dirichlet form is strong local.
In section~4, we review the theory of self-similar Dirichlet forms on p.c.f.\ self-similar fractals; we then discuss the index of such fractals and related topics.
In the last section, we prove the existence of derivatives of functions on p.c.f.\ fractals when the index is one.
Some properties of Sierpinski gaskets are also investigated in detail.
\section{Concept of index of regular Dirichlet forms}
Let $K$ be a locally compact, separable, and metrizable space, and $\mu$, a $\sg$-finite Borel measure on $K$ with full support.
We denote the Borel $\sg$-field of $K$ by $\cB(K)$.
Let  $C(K)$ denote the set of all continuous real-valued functions on $K$, and $C_0(K)$, the set of all functions in $C(K)$ with compact support.
For $1\le p\le \infty$, $L^p(K,\mu)$ denotes the real $L^p$-space on the measure space $(K,\cB(K),\mu)$.
Suppose that we are given a symmetric regular Dirichlet form $(\cE,\cF)$ on $L^2(K,\mu)$.
That is, $\cF$ is a dense subspace of $L^2(K,\mu)$, $\cE\colon \cF\times\cF\to\R$ is a nonnegative definite symmetric bilinear form that satisfies the following:
\begin{itemize}
\item (Closedness) If a sequence $\{f_i\}_{i=1}^\infty$ in $\cF$ satisfies 
\[
\lim_{i\to\infty}\sup_{j>i}\cE_1(f_i-f_j,f_i-f_j)=0,
\]
then there exists $f\in\cF$ such that $\lim_{i\to\infty}\cE_1(f_i-f,f_i-f)=0$.
Here, we define 
\[
\cE_1(f,g):=\cE(f,g)+\int_K f(x)g(x)\,\mu(dx),\quad 
f,g\in\cF.
\]
\item (Markov property) For any $f\in\cF$, $\hat f:=\min(\max(0,f),1)$ belongs to $\cF$ and $\cE(\hat f,\hat f)\le \cE(f,f)$.
\item (Regularity) The space $\cF\cap C_0(K)$ is dense  in $\cF$ with norm $\|f\|_\cF:=\cE_1(f,f)^{1/2}$ and dense in $C_0(K)$ with uniform norm.\end{itemize}
The space $\cF$ becomes a Hilbert space under inner product $(f,g)_\cF:=\cE_1(f,g)$.
Hereafter, the topology of $\cF$ is always considered as that derived from norm $\|\cdot\|_\cF$.
We write $\cE(f)$ for $\cE(f,f)$ for simplicity.
For each $f\in\cF$, we can define a positive finite Borel measure $\nu_f$ on $K$ as follows~(\cite[section~3.2]{FOT}): When $f$ is bounded, $\nu_f$ is determined by the identity
\[
  \int_K \ph\,d\nu_f=2\cE(f\ph,f)-\cE(\ph,f^2)\quad
  \mbox{for all }\ph\in\cF\cap C_0(K).
\]
By utilizing the inequality
\begin{equation}\label{eq:energy}
  \left|\sqrt{\nu_f(B)}-\sqrt{\nu_g(B)}\right|^2\le \nu_{f-g}(B)
  \le 2\cE(f-g)
\end{equation}
for any Borel subset $B$ of $K$ and $f$, $g\in\cF\cap L^\infty(K,\mu)$
(cf.~\cite[p.~111, and (3.2.13) and (3.2.14) in p.~110]{FOT}), we can define $\nu_f$ for any $f\in\cF$ by a limiting procedure.
Then, equation \Eq{energy} still holds for any $f,g\in\cF$.
The measure $\nu_f$ is called the {\em energy measure} of $f$. (In the textbook~\cite{FOT} and the papers by the author~\cite{Hi05,HN06,Hi08}, the symbol $\mu_{\la f\ra}$ has been used to denote the energy measure of $f$.
In this paper, we use the symbol $\nu_f$ instead.)
For $f,g\in\cF$, the mutual energy measure $\nu_{f,g}$, which is a signed Borel measure on $K$, is defined as
\[
  \nu_{f,g}=\frac12(\nu_{f+g}-\nu_{f}-\nu_g).
\]
Then, $\nu_{f,f}=\nu_f$ and $\nu_{f,g}$ is bilinear in $f$, $g$ (\cite[p.~111]{FOT}).
The following inequality is also useful in the subsequent argument: for $f,g\in\cF$ and a Borel subset $B$ of $K$,
\begin{equation}
\label{eq:schwarz}
|\nu_{f,g}(B)|\le\sqrt{\nu_f(B)}\sqrt{\nu_g(B)}.
\end{equation}

For two $\sg$-finite (or signed) Borel measures $\mu_1$ and $\mu_2$ on $K$, we write $\mu_1\ll\mu_2$ if $\mu_1$ is absolutely continuous with respect to $\mu_2$, and $\mu_1\perp\mu_2$ if $\mu_1$ and $\mu_2$ are mutually singular.
The measure in the following definition is a natural generalization of the ones that were defined by Kusuoka~\cite{Ku89,Ku93}.
\begin{definition}\label{def:mdem}
A $\sg$-finite Borel measure $\nu$ on $K$ is called a {\em minimal energy-dominant measure} of $(\cE,\cF)$ if the following two conditions are satisfied.
\begin{enumerate}
\item (Domination) For every $f\in \cF$, $\nu_f\ll \nu$.
\item (Minimality) If another $\sg$-finite Borel measure $\nu'$ on $K$ satisfies condition (i) with $\nu$ replaced by $\nu'$, then $\nu\ll\nu'$.
\end{enumerate}
\end{definition}
By definition, two minimal energy-dominant measures are mutually absolutely continuous.
From \Eq{schwarz}, $\nu_{f,g}\ll\nu$ for a minimal energy-dominant measure $\nu$ and $f,g\in\cF$.
As observed from \Prop{dense} below, minimal energy-dominant measures can be realized by energy measures of some functions in $\cF$.
\begin{lemma}\label{lem:basic}
Let $\{f_i\}_{i=1}^\infty$ be a sequence of functions in $\cF$, $f\in\cF$, and let $\nu$ be a $\sg$-finite Borel measure on $K$. Suppose that $\nu_{f_i}\ll\nu$ for every $i\in\N$ and $f_i$ converges to $f$ in $\cF$ as $i\to\infty$. Then, $\nu_f\ll\nu$.
\end{lemma}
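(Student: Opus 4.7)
The plan is to exploit the fundamental inequality \Eq{energy}, which gives uniform control of energy measures in the $\cF$-topology. Concretely, for every Borel set $B \subseteq K$ and every $i$,
\[
\left|\sqrt{\nu_f(B)} - \sqrt{\nu_{f_i}(B)}\right|^2 \le \nu_{f-f_i}(B) \le 2\cE(f - f_i),
\]
and the right-hand side tends to $0$ as $i \to \infty$ by the hypothesis $f_i \to f$ in $\cF$. Crucially, the bound is independent of $B$, so $\sqrt{\nu_{f_i}(B)} \to \sqrt{\nu_f(B)}$ uniformly over all Borel sets $B$.

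With this in hand, the absolute continuity statement is essentially a limit argument. Suppose $B$ is a Borel set with $\nu(B) = 0$. Since each $\nu_{f_i} \ll \nu$, we have $\nu_{f_i}(B) = 0$ for every $i$. Passing to the limit via the uniform convergence above yields $\sqrt{\nu_f(B)} = 0$, hence $\nu_f(B) = 0$. This holds for every $\nu$-null Borel set, so $\nu_f \ll \nu$.

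I do not anticipate any genuine obstacle: everything follows directly from the cited inequality \Eq{energy}, which has already been established as valid for all $f, g \in \cF$ (not merely bounded ones). The only subtlety worth noting is that the energy measures $\nu_{f_i}$ and $\nu_f$ are finite Borel measures, so no $\sigma$-finiteness gymnastics are required, and the $\sigma$-finiteness of $\nu$ plays no essential role in the argument beyond making the notion $\nu_{f_i} \ll \nu$ meaningful in the standard sense. The proof is thus a two-line consequence of \Eq{energy}.
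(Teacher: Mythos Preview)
Your proof is correct and follows the same approach as the paper's own proof: both arguments take a Borel set $B$ with $\nu(B)=0$, note that $\nu_{f_i}(B)=0$ by absolute continuity, and then use inequality \Eq{energy} to conclude $\nu_f(B)=\left|\sqrt{\nu_f(B)}-\sqrt{\nu_{f_i}(B)}\right|^2\le 2\cE(f-f_i)\to 0$. Your added remarks about uniformity in $B$ and the role of $\sigma$-finiteness are accurate but not needed for the argument.
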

\begin{proof}
Suppose $\nu(B)=0$. Then, $\nu_{f_i}(B)=0$ for all $i$. From \Eq{energy},
\[
\nu_{f}(B)=\left|\sqrt{\nu_f(B)}-\sqrt{\nu_{f_i}(B)}\right|^2\le 2\cE(f-f_i)\to 0 \quad \mbox{as }i\to\infty.
\]
Therefore, $\nu_{f}(B)=0$. This implies the claim.
\end{proof}
\begin{lemma}\label{lem:mdem}
Let $\{f_i\}_{i=1}^\infty$ be a sequence of functions in $\cF$ such that the linear span of $\{f_i\}$ is dense in $\cF$.
Take a sequence $\{a_i\}_{i=1}^\infty$ of positive real numbers such that $\sum_{i=1}^\infty a_i \nu_{f_i}(K)$ converges.
Define $\nu=\sum_{i=1}^\infty a_i \nu_{f_i}$. Then, $\nu$ is a minimal energy-dominant measure.
\end{lemma}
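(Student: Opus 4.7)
The plan is to verify the two defining conditions of a minimal energy-dominant measure separately. Since $\nu$ is defined as a convergent sum of finite Borel measures, it is automatically a finite (hence $\sigma$-finite) Borel measure on $K$, so there is nothing to check on that front.

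For the domination property (i), I would first handle finite linear combinations. If $g = \sum_{j=1}^N c_j f_j$, then bilinearity of the mutual energy measure yields $\nu_g = \sum_{j,k=1}^N c_j c_k \nu_{f_j,f_k}$. If $\nu(B)=0$, then $\nu_{f_i}(B)=0$ for every $i$ (since $a_i>0$), so \eqref{eq:schwarz} gives $|\nu_{f_j,f_k}(B)|\le \sqrt{\nu_{f_j}(B)\nu_{f_k}(B)}=0$. Thus $\nu_g\ll\nu$ for every such $g$. For a general $f\in\cF$, density of the linear span of $\{f_i\}$ provides a sequence $g_n\to f$ in $\cF$ with each $\nu_{g_n}\ll\nu$, and \Lem{basic} then yields $\nu_f\ll\nu$.

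For the minimality property (ii), suppose $\nu'$ is a $\sigma$-finite Borel measure with $\nu_f\ll\nu'$ for every $f\in\cF$. Applying this to each $f_i$, if $\nu'(B)=0$ then $\nu_{f_i}(B)=0$ for every $i$, hence $\nu(B)=\sum_{i=1}^\infty a_i\nu_{f_i}(B)=0$. This is exactly $\nu\ll\nu'$.

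There is no substantive obstacle; the only point requiring a small amount of care is the passage from finite linear combinations to arbitrary $f\in\cF$ in step (i), which is precisely what \Lem{basic} was set up to handle. Once that machinery is in place, minimality reduces to an immediate chase through the definition of $\nu$.
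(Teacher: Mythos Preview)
Your proof is correct and follows essentially the same approach as the paper: handle domination for finite linear combinations via \Eq{schwarz}, extend to all of $\cF$ by density using \Lem{basic}, and deduce minimality directly from the series form of $\nu$. The only difference is that you spell out the bilinear expansion of $\nu_g$ explicitly, whereas the paper compresses this into the single phrase ``by using \Eq{schwarz}.''
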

\begin{proof}
If $f$ is a linear combination of $f_1,\dots,f_N$ for some $N\in\N$, then $\nu_f\ll\nu$ by using \Eq{schwarz}.
From \Lem{basic}, $\nu_f\ll\nu$ for all $f\in\cF$.
Therefore, assumption (i) in \Defn{mdem} holds.
Next, suppose that a $\sg$-finite Borel measure $\nu'$ on $K$ satisfies (i) in \Defn{mdem} with $\nu$ replaced by $\nu'$.
Then, $\nu_{f_i}\ll \nu'$ for any $i\in\N$. Since $\nu(B)=\sum_{i=1}^\infty a_i \nu_{f_i}(B)$ for $B\in\cB(K)$, we obtain $\nu\ll\nu'$. Therefore, assumption (ii) in \Defn{mdem} holds.
\end{proof}
\begin{lemma}\label{lem:null}
Let $\nu$ be a minimal energy-dominant measure and $B$, a Borel set of $K$.
Then, $\nu(B)=0$ if and only if $\nu_f(B)=0$ for every $f\in\cF$.
\end{lemma}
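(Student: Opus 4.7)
The forward implication is immediate: by Definition~\ref{def:mdem}(i), $\nu_f \ll \nu$ for every $f \in \cF$, so $\nu(B) = 0$ forces $\nu_f(B) = 0$. All the content lies in the converse.

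For the converse, I plan to build a candidate dominating measure that ``kills'' the set $B$ and then invoke the minimality clause in Definition~\ref{def:mdem}(ii) to conclude that $\nu$ itself must already annihilate $B$. Concretely, define
\[
  \nu'(A) := \nu(A \setminus B), \qquad A \in \cB(K).
\]
Since $\nu'(A) \le \nu(A)$ and $\nu$ is $\sg$-finite, $\nu'$ is a $\sg$-finite Borel measure on $K$.

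The key verification is that $\nu'$ satisfies the domination condition of Definition~\ref{def:mdem}(i). For any $f \in \cF$, the hypothesis $\nu_f(B) = 0$ gives $\nu_f(A \cap B) = 0$, hence $\nu_f(A) = \nu_f(A \setminus B)$ for every $A \in \cB(K)$. If $\nu'(A) = 0$, i.e., $\nu(A \setminus B) = 0$, then $\nu_f(A \setminus B) = 0$ by $\nu_f \ll \nu$, so $\nu_f(A) = 0$. Thus $\nu_f \ll \nu'$ for all $f \in \cF$.

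By the minimality of $\nu$ (Definition~\ref{def:mdem}(ii)), we conclude $\nu \ll \nu'$. But $\nu'(B) = \nu(B \setminus B) = 0$, so $\nu(B) = 0$, which is what we wanted. I do not foresee any real obstacle here; the only subtlety is remembering to check that $\nu'$ is $\sg$-finite so that it qualifies as a competitor in the minimality clause, and this is immediate from $\nu' \le \nu$.
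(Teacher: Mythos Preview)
Your proof is correct, but it takes a different route from the paper's. The paper invokes Lemma~\ref{lem:mdem}: choosing a dense sequence $\{f_i\}$ in $\cF$ and positive weights $\{a_i\}$, it forms $\nu'=\sum_i a_i\nu_{f_i}$, which is itself a minimal energy-dominant measure; since $\nu_{f_i}(B)=0$ for all $i$ by hypothesis, $\nu'(B)=0$, and mutual absolute continuity of $\nu$ and $\nu'$ gives $\nu(B)=0$. Your argument instead restricts $\nu$ to $K\setminus B$ and checks directly from the hypothesis that this restriction still dominates every $\nu_f$, then appeals to minimality. Your approach is a bit more self-contained in that it uses only Definition~\ref{def:mdem} and avoids the explicit construction of Lemma~\ref{lem:mdem}; the paper's approach, on the other hand, reuses a lemma already in hand and simultaneously illustrates that minimal energy-dominant measures can be realized concretely as countable sums of energy measures.
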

\begin{proof}
The only if part is obvious from the definition of the  minimal energy-dominant measure.
Suppose that $\nu_f(B)=0$ for every $f\in\cF$.
Take  $\{f_i\}_{i=1}^\infty$ and $\{a_i\}_{i=1}^\infty$ as given in \Lem{mdem} and $\nu'=\sum_{i=1}^\infty a_i \nu_{f_i}$.
Then, $\nu'$ is a minimal energy-dominant measure by \Lem{mdem}. 
Since  $\nu'$ and $\nu$ are mutually absolutely continuous and $\nu'(B)=0$, we obtain $\nu(B)=0$.
\end{proof}
For a signed Borel measure $\mu_1$ and a $\sg$-finite Borel measure $\mu_2$ on $K$ such that $\mu_1\ll\mu_2$, we denote the Radon--Nikodym derivative of $\mu_1$ with respect to $\mu_2$ by $\frac{d\mu_1}{d\mu_2}$ (or ${d\mu_1}/{d\mu_2}$).
\begin{lemma}\label{lem:energyineq}
Let $\nu$ be a minimal energy-dominant measure and $f,g\in\cF$.
\begin{enumerate}
\item It holds that $\left(\sqrt{{d\nu_f}/{d\nu}}-\sqrt{{d\nu_g}/{d\nu}}\right)^2\le {d\nu_{f-g}}/{d\nu}$ $\nu$-a.e.
In particular,
\begin{equation}\label{eq:sqrt}
 \int_K\left(\sqrt{\frac{d\nu_f}{d\nu}}-\sqrt{\frac{d\nu_g}{d\nu}}\right)^2\,d\nu
 \le \nu_{f-g}(K)\le 2\cE(f-g).
\end{equation}
\item Let $\{f_i\}_{i=1}^\infty$ and $\{g_i\}_{i=1}^\infty$ be sequences in $\cF$ and $f_i\to f$ and $g_i\to g$ in $\cF$ as $i\to\infty$.
Then, $d\nu_{f_i,g_i}/d\nu$ converges to $d\nu_{f,g}/d\nu$ in $L^1(K,\nu)$.
\end{enumerate}
\end{lemma}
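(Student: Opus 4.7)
The plan is to pass to Radon--Nikodym densities against $\nu$ and argue pointwise. For $h,k\in\cF$ set $\phi_h := d\nu_h/d\nu$ and $\phi_{h,k} := d\nu_{h,k}/d\nu$; both densities are $\nu$-a.e.\ defined, the latter signed, using minimality of $\nu$ together with the absolute continuity $\nu_{h,k}\ll\nu$ that follows from \Eq{schwarz}. Bilinearity of the mutual energy measure yields $\phi_{f-g}=\phi_f - 2\phi_{f,g} + \phi_g$ $\nu$-a.e., which is the identity underlying both parts.

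For (i), the key step is the pointwise Cauchy--Schwarz bound $\phi_{f,g}^2 \le \phi_f \phi_g$ $\nu$-a.e. For each $a,b\in\R$ the measure $\nu_{af+bg} = a^2\nu_f + 2ab\nu_{f,g} + b^2\nu_g$ is nonnegative, so $a^2\phi_f + 2ab\phi_{f,g} + b^2\phi_g \ge 0$ off a $\nu$-null set $N_{a,b}$. Taking the union over $(a,b)\in\Q^2$ and invoking continuity of the quadratic form in $(a,b)$ produces a single $\nu$-null set outside of which the form is nonnegative for every real $(a,b)$; the nonpositivity of its discriminant then yields the claimed pointwise Cauchy--Schwarz. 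Combined with the identity above,
\[
\left(\sqrt{\phi_f}-\sqrt{\phi_g}\right)^2 = \phi_f + \phi_g - 2\sqrt{\phi_f\phi_g} \le \phi_f + \phi_g - 2\phi_{f,g} = \phi_{f-g}
\]
$\nu$-a.e., which is the first assertion of (i). Integrating over $K$ and invoking \Eq{energy} delivers the displayed estimate.

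For (ii), use the bilinear splitting $\nu_{f_i,g_i}-\nu_{f,g} = \nu_{f_i-f,\,g_i} + \nu_{f,\,g_i-g}$. The pointwise Cauchy--Schwarz from (i), followed by the ordinary Cauchy--Schwarz inequality in $L^2(K,\nu)$, gives
\[
\int_K \left|\frac{d\nu_{f_i-f,\,g_i}}{d\nu}\right|\,d\nu
\le \int_K\sqrt{\phi_{f_i-f}\phi_{g_i}}\,d\nu
\le \sqrt{\nu_{f_i-f}(K)\,\nu_{g_i}(K)}
\le 2\sqrt{\cE(f_i-f)\,\cE(g_i)},
\]
where \Eq{energy} is used at the last step. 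Since $\{g_i\}$ is $\cF$-convergent and hence $\cE$-bounded while $\cE(f_i-f)\to 0$, the right-hand side tends to $0$; the symmetric estimate for $\nu_{f,\,g_i-g}$ is identical, and together they establish $L^1(K,\nu)$ convergence.

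The only delicate point is the pointwise Cauchy--Schwarz $\phi_{f,g}^2\le\phi_f\phi_g$, since the hypothesis \Eq{schwarz} is only set-wise and one must control the exceptional null set uniformly in a continuous parameter. The rational-parameter trick reduces this to a single countable collection of null sets and is cleaner than invoking the Hahn decomposition of $\nu_{f,g}$ on Borel sets inside its positive and negative supports; once this pointwise bound is in hand, both assertions of the lemma are routine consequences of bilinearity and \Eq{energy}.
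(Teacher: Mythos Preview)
Your proof of (i) matches the paper's exactly: both obtain the pointwise Cauchy--Schwarz inequality $\phi_{f,g}^2\le\phi_f\phi_g$ by expanding $d\nu_{sf-tg}/d\nu\ge 0$ over rational $(s,t)$ and reading off the discriminant, and then deduce the displayed estimate.

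For (ii) you take a slightly different route. The paper uses the polarization identity $\nu_{f,g}=(\nu_{f+g}-\nu_{f-g})/4$ to reduce to the diagonal case $f_i=g_i$, $f=g$, and then factors $|\phi_{f_i}-\phi_f|=\bigl|\sqrt{\phi_{f_i}}-\sqrt{\phi_f}\bigr|\bigl(\sqrt{\phi_{f_i}}+\sqrt{\phi_f}\bigr)$, applying Cauchy--Schwarz in $L^2(K,\nu)$ together with \Eq{sqrt} to get the explicit bound $\sqrt{2\cE(f_i-f)}\bigl(\sqrt{2\cE(f_i)}+\sqrt{2\cE(f)}\bigr)$. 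Your bilinear splitting $\nu_{f_i,g_i}-\nu_{f,g}=\nu_{f_i-f,g_i}+\nu_{f,g_i-g}$ followed by the pointwise Cauchy--Schwarz from (i) and the $L^2$ Cauchy--Schwarz is equally valid and arguably more direct, since it avoids the polarization detour. The paper's version has the minor practical advantage that the diagonal bound it produces is reused verbatim later (in the proof of \Lem{lower1}).
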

\begin{proof}
(i) Since $d\nu_{sf-tg}/d\nu\ge0$ $\nu$-a.e.\ for all $s,t\in\R$, we have, for $\nu$-a.e., for all $s,t\in\Q$,
\[
  s^2\frac{d\nu_{f}}{d\nu}-2st\frac{d\nu_{f,g}}{d\nu}+t^2\frac{d\nu_{g}}{d\nu}\ge0.
\]
Therefore, 
\[
\left(\frac{d\nu_{f,g}}{d\nu}\right)^2\le \frac{d\nu_{f}}{d\nu}\cdot \frac{d\nu_{g}}{d\nu} \quad \nu\mbox{-a.e.}
\]
The assertions are derived from this inequality.

(ii) Since $\nu_{f,g}=(\nu_{f+g}-\nu_{f-g})/4$, it is sufficient to consider the case that $f_i=g_i$ for all $i$ and $f=g$.
From \Eq{sqrt}, we have
\begin{align}\label{eq:energyineq}
\int_K&\left|\frac{d\nu_{f_i}}{d\nu}-\frac{d\nu_f}{d\nu}\right|\,d\nu\nonumber\\
&=\int_K\left|\sqrt{\frac{d\nu_{f_i}}{d\nu}}-\sqrt{\frac{d\nu_f}{d\nu}}\right|
\left(\sqrt{\frac{d\nu_{f_i}}{d\nu}}+\sqrt{\frac{d\nu_f}{d\nu}}\right)
\,d\nu\nonumber\\
&\le\left\{\int_K\left(\sqrt{\frac{d\nu_{f_i}}{d\nu}}-\sqrt{\frac{d\nu_f}{d\nu}}\right)^2\,d\nu\right\}^{1/2}
\left\{\left(\int_K\frac{d\nu_{f_i}}{d\nu}\,d\nu\right)^{1/2}+\left(\int_K\frac{d\nu_f}{d\nu}\,d\nu\right)^{1/2}\right\}\nonumber\\
&\le \sqrt{2\cE(f_i-f)}\left(\sqrt{2\cE(f_i)}+\sqrt{2\cE(f)}\right),
\end{align}
which converges to $0$ as $i\to\infty$.
\end{proof}
Let $\{f_i\}_{i=1}^\infty$ be a sequence in $\cF$ and $\nu$, a minimal energy-dominant measure.
For $i,j\in\N$, denote the Radon--Nikodym derivative $d\nu_{f_i,f_j}/d\nu$ by $Z^{i,j}$.
When $\nu$ is a finite measure, one of the concrete ways to construct $Z^{i,j}$ is as follows. Let $\{\cB_n\}_{n=1}^\infty$ be a sequence of $\sg$-fields on $K$ such that $\cB_1\subset \cB_2\subset\cdots$, $\bigvee_{n=1}^\infty\cB_n=\cB(K)$, and each $\cB_n$ is generated by finitely many Borel subsets of $K$.
For each $n\in\N$, $\cB_n$ is provided by a partition of $K$ consisting of finitely many disjoint Borel sets $B_n^1,\dotsc,B_n^{M_n}$ for some $M_n\in \N$.
Then, for each $i,j\in\N$, the Radon--Nikodym derivative $Z_n^{i,j}$ of $\nu_{f_i,f_j}|_{\cB_n}$ with respect to $\nu|_{\cB_n}$ is defined by
\[
  Z_n^{i,j}(x)=\sum_{\a=1}^{M_n}\frac{\nu_{f_i,f_j}(B_n^\a)}{\nu(B_n^\a)}\cdot 1_{B_n^\a}(x),
  \quad x\in K.
\]
Here, $0/0:=1$ by convention.
We define
\begin{equation}\label{eq:K0}
  K_0=\{x\in K\mid \mbox{For every $i,j\in \N$, $\lim_{n\to\infty}Z_n^{i,j}(x)$ exists}\}.
\end{equation}
From the martingale convergence theorem, $\nu(K\setminus K_0)=0$.
For each $i,j\in \N$, we define
\begin{equation}\label{eq:Zij}
  Z^{i,j}(x)=\left\{
  \begin{array}{cl}
  \lim_{n\to\infty}Z_n^{i,j}(x) & \mbox{if }x\in K_0 \\
  0 & \mbox{if }x\in K\setminus K_0 
  \end{array}\right..
\end{equation}
Then, $Z^{i,j}$ is a Borel measurable function on $K$ and it is equal to $d\nu_{f_i,f_j}/d\nu$.
We thus have the following claim which is evident from the manner in which $Z^{i,j}(x)$ is constructed.
\begin{lemma}\label{lem:Z}
\begin{enumerate}
\item For every $i,j\in \N$ and $x\in K$, $|Z^{i,j}(x)|\le \sqrt{Z^{i,i}(x)}\sqrt{Z^{j,j}(x)}$.
\item For every $N\in \N$ and $x\in K$, the $N\times N$ matrix $\left(Z^{i,j}(x)\right)_{i,j=1}^N$ is symmetric and non\-negative definite.
\end{enumerate}
\end{lemma}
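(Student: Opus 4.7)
The plan is to verify both statements first at the approximation level $Z_n^{i,j}$, and then pass to the limit, using the fact that $\nu(K \setminus K_0) = 0$ covers neither objection nor obstacle because the lemma asserts pointwise inequalities for the specific Borel representatives constructed above. The key observation is that at each fixed $n$ and $x$, the value $Z_n^{i,j}(x)$ is a constant multiple (in $i,j$) of $\nu_{f_i,f_j}(B_n^\a)$, where $B_n^\a$ is the unique cell of the partition containing $x$, so bilinearity of the mutual energy measure transfers directly to bilinearity of $(i,j)\mapsto Z_n^{i,j}(x)$ over the index set.

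First I would fix $n$ and $x \in B_n^\a$ and split into cases. If $\nu(B_n^\a) > 0$, then for any $(c_1,\dotsc,c_N)\in\R^N$, bilinearity gives
\[
\sum_{i,j=1}^N c_i c_j \, \nu_{f_i,f_j}(B_n^\a) = \nu_{\sum_{i=1}^N c_i f_i}(B_n^\a) \ge 0,
\]
and dividing by $\nu(B_n^\a)$ yields nonnegative definiteness of $\bigl(Z_n^{i,j}(x)\bigr)_{i,j=1}^N$. Symmetry is immediate from $\nu_{f_i,f_j}=\nu_{f_j,f_i}$. The inequality in (i) follows either from applying \Eq{schwarz} to $B_n^\a$ and dividing by $\nu(B_n^\a)$, or from specializing nonnegative definiteness to the $2\times 2$ submatrix. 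If instead $\nu(B_n^\a)=0$, then by \Lem{null} $\nu_{f_i,f_j}(B_n^\a)=0$ for all $i,j$ as well, so by the convention $0/0:=1$ we have $Z_n^{i,j}(x)=1$ for every $i,j$; then $|Z_n^{i,j}(x)|=1=\sqrt{Z_n^{i,i}(x)}\sqrt{Z_n^{j,j}(x)}$ and $\sum_{i,j}c_ic_j Z_n^{i,j}(x)=(\sum_i c_i)^2\ge 0$, so both conclusions still hold.

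Now for $x\in K_0$, the limit $Z^{i,j}(x)=\lim_{n\to\infty}Z_n^{i,j}(x)$ exists for all $i,j$, so both the inequality in (i) and the quadratic form inequality $\sum c_i c_j Z^{i,j}(x)\ge 0$ are preserved by taking $n\to\infty$; symmetry passes to the limit in the same way. For $x\in K\setminus K_0$ the definition in \Eq{Zij} sets $Z^{i,j}(x)=0$ for all $i,j$, and both assertions are trivial. Combining the two cases finishes the proof.

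I do not anticipate a serious obstacle: the only subtle point is handling cells with $\nu(B_n^\a)=0$ under the $0/0:=1$ convention, which is easily absorbed as shown above. The entire argument is a direct transport of \Eq{schwarz} and bilinearity through the Radon--Nikodym construction, followed by a routine passage to the limit.
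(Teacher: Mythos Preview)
Your proposal is correct and is exactly the argument the paper has in mind: the paper does not give a formal proof at all but simply declares the lemma ``evident from the manner in which $Z^{i,j}(x)$ is constructed,'' and your write-up supplies precisely those details---verifying symmetry, nonnegative definiteness, and the Schwarz bound for each $Z_n^{i,j}$ via bilinearity and \Eq{schwarz}, handling the $0/0$ convention, and then passing to the limit on $K_0$ (with the trivial case on $K\setminus K_0$). One minor remark: your appeal to \Lem{null} in the $\nu(B_n^\a)=0$ case is fine but slightly roundabout, since $\nu_{f_i}\ll\nu$ already follows from $\nu$ being a minimal energy-dominant measure, and then \Eq{schwarz} kills $\nu_{f_i,f_j}(B_n^\a)$.
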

Even if $\nu$ is an infinite measure, it is evident to see that we can define a version of $Z^{i,j}=d\nu_{f_i,f_j}/d\nu$ such that the claims of \Lem{Z} hold.
Hereafter, we always take a version of the Radon--Nikodym derivatives $Z^{i,j}$ for given $\{f_i\}_{i=1}^\infty$ and $\nu$ in this manner.

Denote the usual real $\ell_2$-space by $\ell_2$, namely,
\[
  \ell_2=\{(a_i)_{i\in\N}\mid \mbox{$a_i\in\R$ for every $i\in\N$ and $\sum_{i=1}^\infty a_i^2<\infty$}\}.
\]
The canonical inner product and the norm on $\ell_2$ will be denoted by $(\cdot,\cdot)_{\ell_2}$ and $\|\cdot\|_{\ell_2}$, respectively.
We define
\begin{equation}\label{eq:Fzero}
  \cFz=\{f\in\cF\mid \mbox{$\nu_f$ is a minimal energy-dominant measure of }(\cE,\cF)\}.
\end{equation}
\begin{proposition}\label{prop:dense}
$\cFz$ is dense in $\cF$.
\end{proposition}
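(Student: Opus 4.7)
The plan is to combine a perturbation lemma with the construction of at least one element of $\cFz$, and then deduce density by a single perturbation of an arbitrary $g\in\cF$.

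I would first establish a perturbation lemma: for any $u,w\in\cF$, the set of $t\in\R$ for which $\nu_{u+tw}$ fails to dominate $\nu_u+\nu_w$ is at most countable. Fix a minimal energy-dominant measure $\mu$ (available by \Lem{mdem}), and write $p=d\nu_u/d\mu$, $q=d\nu_w/d\mu$, $r=d\nu_{u,w}/d\mu$; then $d\nu_{u+tw}/d\mu=p+2tr+t^2q$. By \Lem{Z}(i) (equivalently \Eq{schwarz}), $r^2\le pq$ $\mu$-a.e., so this nonnegative quadratic in $t$ vanishes at a given $x$ only when $q(x)>0$ and $t=-r(x)/q(x)$. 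The level sets of the measurable function $\phi(x)=-r(x)/q(x)$ on $\{q>0\}$ are pairwise disjoint for distinct $t$, so only countably many $t$ give a level set of positive $\mu$-measure; for every other $t$, $d\nu_{u+tw}/d\mu>0$ wherever $p+q>0$, i.e.\ $\nu_{u+tw}\gg\nu_u+\nu_w$.

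Next I would produce some $v\in\cFz$ by an iterative construction. Take a dense sequence $\{f_i\}_{i=1}^\infty$ in $\cF$ and let $\nu_0=\sum_i a_i\nu_{f_i}$ be the minimal energy-dominant measure from \Lem{mdem}. Set $v_0=0$ and $v_N=v_{N-1}+c_Nf_N$, where $c_N\neq0$ is supplied by the perturbation lemma so that $\nu_{v_N}\gg\nu_{v_{N-1}}+\nu_{f_N}$; take $|c_N|$ small enough that $\{v_N\}$ is Cauchy in $\cF$ and set $v=\lim v_N$. To push $\nu_{v_N}\gg\nu_{f_i}$ (which holds for $N\ge i$) through the limit, I would invoke the pointwise bound
\[
\Bigl|\sqrt{d\nu_{v_{N+1}}/d\nu_0}-\sqrt{d\nu_{v_N}/d\nu_0}\Bigr|\le|c_{N+1}|\sqrt{d\nu_{f_{N+1}}/d\nu_0}\qquad \nu_0\text{-a.e.}
\]
coming from \Lem{energyineq}(i), telescope it, and choose the $|c_N|$ adaptively (in terms of $d\nu_{v_{N-1}}/d\nu_0$) so that the right-hand tail becomes negligible relative to the accumulated density, leaving $d\nu_v/d\nu_0>0$ $\nu_0$-a.e. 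That realizes $\nu_v$ as a minimal energy-dominant measure, so $v\in\cFz$.

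Finally, density is immediate: given $g\in\cF$ and $\eps>0$, apply the perturbation lemma to $(g,v)$ with $v\in\cFz$ to obtain $t$ with $|t|<\eps/\|v\|_\cF$ such that $\nu_{g+tv}\gg\nu_v$; since $\nu_v$ is minimal energy-dominant, so is $\nu_{g+tv}$, and $g+tv\in\cFz$ lies within $\eps$ of $g$. The main obstacle is the inductive construction of $v$---absolute continuity need not survive $\cF$-convergence, so quantitative control on $|c_N|$ is essential to keep the telescoped lower bound strictly positive $\nu_0$-a.e.\ on $\bigcup_i\{d\nu_{f_i}/d\nu_0>0\}$, which by \Lem{null} equals $K$ modulo $\nu_0$-null sets.
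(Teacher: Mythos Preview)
Your approach is correct and genuinely different from the paper's. The paper uses an infinite-dimensional Fubini argument: with a c.o.n.s.\ $\{f_i\}$ and $\nu=\sum 2^{-i}\nu_{f_i}$, it considers the contraction $\Psi\colon\ell_2\to\cF$, $\bfa\mapsto\sum a_i2^{-i/2}f_i$, observes that for each fixed $x$ the set $\{\bfa:(d\nu_{\Psi(\bfa)}/d\nu)(x)=0\}$ is the kernel of a nonzero bounded symmetric operator $A_x$ on $\ell_2$, and then integrates against a Gaussian measure $\kp$ on $\ell_2$ (which charges no proper closed subspace) to conclude via Fubini that $\Psi(\bfa)\in\cFz$ for $\kp$-a.e.\ $\bfa$; since such $\bfa$ are dense in $\ell_2$ and $\Psi(\ell_2)$ is dense in $\cF$, existence and density come out in one stroke. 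Your route instead isolates a one-parameter perturbation lemma and uses it twice: once, combined with an adaptive iterative construction, to manufacture a single $v\in\cFz$; and once more to perturb an arbitrary $g$ into $\cFz$. This is more elementary---no auxiliary measure on $\ell_2$---and cleanly separates existence from density, at the cost of the step you correctly flag as the crux. The inductive construction of $v$ can indeed be completed by a Borel--Cantelli control: at stage $N$, having fixed $v_N$, choose $|c_{N+1}|$ so small that the set where $|c_{N+1}|\sqrt{d\nu_{f_{N+1}}/d\nu_0}$ exceeds $2^{-(N+1)}\sqrt{d\nu_{v_N}/d\nu_0}$ has $\nu_0$-measure below $2^{-(N+1)}$; then off a $\nu_0$-null set the telescoped product $\prod_{k>N}(1-2^{-k})$ keeps $\sqrt{d\nu_v/d\nu_0}$ bounded below by a positive multiple of $\sqrt{d\nu_{v_N}/d\nu_0}$ once the latter is positive. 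Both proofs ultimately exploit the same quadratic structure $d\nu_{u+tw}/d\nu_0=p+2tr+t^2q$ with $r^2\le pq$; you slice it one real parameter at a time, the paper over all of $\ell_2$ at once.
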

\begin{proof}
It is sufficient to prove that $d\nu_f/d\nu>0$ $\nu$-a.e.\ for $f$ in some dense subset of $\cF$, where $\nu$ is an arbitrarily fixed minimal energy-dominant measure.

Take a c.o.n.s.\ $\{f_i\}_{i=1}^\infty$ in $\cF$.
Define $\nu=\sum_{i=1}^\infty 2^{-i}\nu_{f_i}$. From \Lem{mdem}, $\nu$ is a minimal energy-dominant measure with $\nu(K)<\infty$.
We define $K_0$ and $Z^{i,j}$ for $i,j\in\N$ as \Eq{K0} and \Eq{Zij}, respectively.
Then, by construction, $\sum_{i=1}^\infty 2^{-i}Z^{i,i}(x)=1$ for $x\in K_0$.
In particular, the following holds:
\begin{equation}\label{eq:nondeg}
\mbox{For each $x\in K_0$, $Z^{i,i}(x)\ne0$ for some $i\in\N$}.
\end{equation}
Fix a Gaussian measure $\kp$ on $\ell_2$ such that the  support of $\kp$ is $\ell_2$ and $\kp$ does not charge any proper closed subspaces of $\ell_2$.

Let $\bfa=(a_i)_{i\in\N}\in\ell_2$ and define
\begin{align*}
g_N=\sum_{i=1}^N a_i 2^{-i/2}f_i \quad(N\in\N),\qquad
g=\sum_{i=1}^\infty a_i 2^{-i/2}f_i.
\end{align*}
Here, the infinite sum given above converges in $\cF$. 
Indeed,
\[
 \sum_{i=1}^\infty |a_i|2^{-i/2}\|f_i\|_\cF
 = \sum_{i=1}^\infty |a_i|2^{-i/2}
 \le\left(\sum_{i=1}^\infty a_i^2\right)^{1/2}
    \left(\sum_{i=1}^\infty 2^{-i}\right)^{1/2}
 =\|\bfa\|_{\ell_2}.
\]
We denote the map $\ell_2\ni\bfa\mapsto g\in\cF$ by $\Psi$, which is a contraction operator.
Then, from \Lem{energyineq}~(ii),
$d\nu_{g_N}/d\nu\,({}=\sum_{i,j=1}^N a_i a_j 2^{-(i+j)/2}Z^{i,j})$ converges to $d\nu_{g}/d\nu$ in $L^1(K,\nu)$.
On the other hand, for $x\in K$,
\begin{align}\label{eq:bounded}
\sum_{i,j=1}^N \left|a_i a_j 2^{-(i+j)/2}Z^{i,j}(x)\right|
&\le \sum_{i,j=1}^N |a_i||a_j|2^{-(i+j)/2}Z^{i,i}(x)^{1/2}Z^{j,j}(x)^{1/2} \quad \mbox{(from \Lem{Z}~(i))}\nonumber\\
&=\left(\sum_{i=1}^N |a_i| 2^{-i/2}Z^{i,i}(x)^{1/2}\right)^2\nonumber\\
&\le\left(\sum_{i=1}^N a_i^2\right)\left(\sum_{i=1}^N 2^{-i}Z^{i,i}(x)\right)\nonumber \\
&\le\sum_{i=1}^\infty a_i^2.
\end{align}
This implies that $(d\nu_{g_N}/d\nu)(x)$ converges as $N\to\infty$ for $x\in K$. The limit must be $(d\nu_{g}/d\nu)(x)$ for $\nu$-a.e.\,$x$.
We define $K_1=\{x\in K_0\mid \lim_{N\to\infty}(d\nu_{g_N}/d\nu)(x)=(d\nu_{g}/d\nu)(x)\}$.

Fix $x\in K$ and take $\bfa=(a_i)_{i\in\N}$ and $\bfb=(b_i)_{i\in\N}$ from $\ell_2$.
Then, $\sum_{i,j=1}^\infty a_i b_j 2^{-(i+j)/2}Z^{i,j}(x)$ converges absolutely.
Indeed, as in \Eq{bounded},
\begin{align*}
\sum_{i,j=1}^N \left|a_i b_j 2^{-(i+j)/2}Z^{i,j}(x)\right|
&\le\left(\sum_{i=1}^N |a_i|2^{-i/2}Z^{i,i}(x)^{1/2}\right)\left(\sum_{j=1}^N |b_j|2^{-j/2}Z^{j,j}(x)^{1/2}\right)\\
&\le\left(\sum_{i=1}^N a_i^2\right)^{1/2}\left(\sum_{j=1}^N b_j^2\right)^{1/2}\left(\sum_{i=1}^N 2^{-i}Z^{i,i}(x)\right)\\
&\le\|\bfa\|_{\ell_2}\|\bfb\|_{\ell_2}.
\end{align*}
From this domination, 
\[
\Phi_x(\bfa,\bfb):=\sum_{i,j=1}^\infty a_i b_j 2^{-(i+j)/2}Z^{i,j}(x),\quad
\bfa=(a_i)_{i\in\N}\in\ell_2,\ \bfb=(b_i)_{i\in\N}\in\ell_2
\]
provides a bounded nonnegative definite and symmetric bilinear form on $\ell_2$.
Then, there exists a bounded nonnegative definite and symmetric operator $A_x$ on $\ell_2$ such that
\[
  \Phi_x(\bfa,\bfb)=(\bfa,A_x\bfb)_{\ell_2},\quad
  \bfa,\bfb\in\ell_2.
\]
The kernel of $A_x$, which is denoted by $\ker A_x$, is equal to $\{\bfa\in\ell_2\mid \Phi_x(\bfa,\bfa)=0\}$.
When $x\in K_1$, $A_x\ne0$ from \Eq{nondeg}, which implies that $\ker A_x$ is a proper closed subspace of $\ell_2$.
Therefore, $\kp(\ker A_x)=0$ for $x\in K_1$, in particular, for $\nu$-a.e.\ $x$.

Now, we set
\begin{align*}
X&:=\{(x,\bfa)\in K\times\ell_2\mid \bfa\in \ker A_x\}\\
&=\left\{(x,\bfa)\in K\times\ell_2\left|\ \sum_{i,j=1}^\infty a_i a_j 2^{-(i+j)/2}Z^{i,j}(x)=0\quad(\bfa=(a_i)_{i\in\N})\right\}\right..
\end{align*}
This is a Borel subset of $K\times\ell_2$.
The above observation together with the Fubini theorem implies $(\nu\otimes\kp)(X)=0$.
Then, $\nu(X_\bfa)=0$ for $\kp$-a.e.\ $\bfa\in\ell_2$, where
$X_\bfa=\{x\in K\mid  (x,\bfa)\in X\}$.
  Consequently, there exists some $S\subset\ell_2$ with $\kp(\ell_2\setminus S)=0$ such that for $\bfa\in S$, $d\nu_g/d\nu>0$ $\nu$-a.e., where $g=\Psi(\bfa)$.
That is, $\nu_g$ is a minimal energy-dominant measure for such $g$. 
The map $\Psi\colon \ell_2\to\cF$ is contractive, and $\Psi(\ell_2)$ is dense in $\cF$.
Since $S$ is dense in $\ell_2$, $\Psi(S)$ is also dense in $\cF$. 
This completes the proof.
\end{proof}
\begin{remark}
In the proof of \Prop{dense}, it is not necessary for the measure $\kp$ to be Gaussian.
It is sufficient for the proof that $\kp$ has a full support and that it does not charge any proper closed subspaces of $\ell_2$. 
\end{remark}

Fix a minimal energy-dominant measure $\nu$ of $(\cE,\cF)$.
Let $\Z_+$ denote the set of all nonnegative integers.
\begin{definition}\label{def:index}
\begin{enumerate}
\item The {\em pointwise index} $p(x)$ of $(\cE,\cF)$ is defined as a measurable function on $K$ taking values in $\Z_+\cup\{+\infty\}$ such that the following hold:
\begin{enumerate}
\item For any $N\in\N$ and any $f_1,\dots,f_N\in\cF$, 
\[
 \rank \left(\frac{d\nu_{f_i,f_j}}{d\nu}(x)\right)_{i,j=1}^N\le p(x)\quad \mbox{for }\nu\mbox{-a.e.\,}x.
\]
\item If another function $p'(x)$ satisfies (a) with  $p(x)$ replaced by $p'(x)$, then $p(x)\le p'(x)$ $\nu$-a.e.\,$x$.
\end{enumerate}
\item The {\em index} $p$ of $(\cE,\cF)$ is defined as $p=\nu\mbox{-}\!\esssup_{x\in K}p(x)\in\Z_+\cup\{+\infty\}$.
In other words, $p$ is the smallest number satisfying the following:
for any $N\in\N$ and any $f_1,\dots,f_N\in\cF$, 
\[
 \rank \left(\frac{d\nu_{f_i,f_j}}{d\nu}(x)\right)_{i,j=1}^N\le p\quad \mbox{for }\nu\mbox{-a.e.\,}x.
\]

\end{enumerate}
\end{definition}
This definition is independent of the choice of a minimal energy-dominant measure $\nu$.
The pointwise index $p(x)$ is unique up to $\nu$-equivalence.
Its existence is assured by the following proposition.
\begin{proposition}\label{prop:rank}
Let $\{f_i\}_{i=1}^\infty$ be a sequence of functions in $\cF$ such that the linear span of $\{f_i\}_{i=1}^\infty$ is dense in $\cF$.
Denote the Radon--Nikodym derivative $d\nu_{f_i,f_j}/d\nu$ by $Z^{i,j}$ for $i,j\in\N$.
For each $N\in\N$ and $x\in K$, define an $N\times N$ matrix $Z_N(x)$ by 
\begin{equation}\label{eq:density}
Z_N(x)=\left(Z^{i,j}(x)\right)_{i,j=1}^N.
\end{equation}
Then, $p(x):=\sup_{N\in\N} \left(\rank Z_N(x)\right)$ is the pointwise index of $(\cE,\cF)$.
\end{proposition}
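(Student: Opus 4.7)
The plan is to verify that the function $p(x):=\sup_{N\in\N}\rank Z_N(x)$ satisfies both conditions (a) and (b) in \Defn{index}(i). Measurability is the first thing to check: for each fixed $N$ and $r$, the set $\{\rank Z_N(x)\le r\}$ is cut out by the simultaneous vanishing of all $(r+1)\times(r+1)$ minors of $Z_N(x)$, which are polynomials in the Borel measurable entries $Z^{i,j}(x)$. Hence $\rank Z_N(\cdot)$ is Borel measurable, and so is the countable supremum $p(x)$.

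For property (a), fix $g_1,\dots,g_N\in\cF$. By the density hypothesis, for each $k$ we can choose a sequence $g_k^{(n)}=\sum_{i=1}^{M_n}c_{k,i}^{(n)}f_i$ of finite linear combinations converging to $g_k$ in $\cF$ (taking a common $M_n$ for all $k$ by padding with zero coefficients). By bilinearity of the energy measure,
\[
 \frac{d\nu_{g_k^{(n)},g_l^{(n)}}}{d\nu}(x)=\sum_{i,j=1}^{M_n}c_{k,i}^{(n)}c_{l,j}^{(n)}Z^{i,j}(x),
\]
so the matrix $\bigl(d\nu_{g_k^{(n)},g_l^{(n)}}/d\nu(x)\bigr)_{k,l=1}^N$ factors as $C^{(n)}Z_{M_n}(x)(C^{(n)})^{T}$ with $C^{(n)}=(c_{k,i}^{(n)})$. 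Its rank is therefore bounded by $\rank Z_{M_n}(x)\le p(x)$ for every $x\in K$ and every $n\in\N$. Applying \Lem{energyineq}(ii) to each of the $N^2$ pairs $(g_k^{(n)},g_l^{(n)})$ gives $L^1(K,\nu)$ convergence of the Radon--Nikodym derivatives; by a diagonal argument we extract a subsequence along which all $N^2$ derivatives converge pointwise $\nu$-a.e.\ to $d\nu_{g_k,g_l}/d\nu$. Since the set $\{A\in\R^{N\times N}\mid \rank A\le p(x)\}$ is closed in $\R^{N\times N}$ for each fixed $x$ (again defined by vanishing of minors), the entrywise limit satisfies $\rank\bigl(d\nu_{g_k,g_l}/d\nu(x)\bigr)_{k,l=1}^N\le p(x)$ for $\nu$-a.e.\ $x$, which is exactly (a).

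Property (b) is immediate: if $p'$ satisfies (a), then applying it to $f_1,\dots,f_N$ gives $\rank Z_N(x)\le p'(x)$ for $\nu$-a.e.\ $x$, and excluding the countable union of exceptional null sets as $N$ ranges over $\N$ yields $p(x)\le p'(x)$ $\nu$-a.e. The main obstacle is the limiting argument in (a), because the $\nu$-null exceptional set from \Lem{energyineq}(ii) depends on the pair $(k,l)$ and on $n$; one must carefully extract pointwise convergence along a single subsequence that works simultaneously for all $N^2$ pairs and then discard the countable union of null sets. Once this is done, the fact that rank is upper semicontinuous under entrywise matrix convergence does the rest.
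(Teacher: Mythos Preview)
Your proof is correct and follows essentially the same route as the paper: factor the approximant matrix through $Z_{M_n}(x)$, invoke \Lem{energyineq}(ii) for $L^1$ convergence, pass to an a.e.\ convergent subsequence, and use that rank is lower semicontinuous (equivalently, that $\{\rank\le r\}$ is closed). You are a bit more explicit about measurability and the minimality condition (b), but the core argument is identical.
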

\begin{proof}
It is sufficient to prove that for any $M\in\N$ and $g_1,\dotsc,g_M\in\cF$, $\rank Y(x)\le p(x)$ for $\nu$-a.e.\,$x$, where $Y(x)=\bigl((d\nu_{g_k,g_l}/d\nu)(x)\bigr)_{k,l=1}^M$.

Denote the linear span of $\{f_i\}_{i=1}^\infty$ by $\widehat\cF$.
First, suppose that $g_k\in\widehat\cF$ for every $k$.
There exist some $\{a_{i,k}\}_{i=1,\dotsc,N,\,k=1,\dotsc,M}\subset\R$ for some $N\in\N$ such that $g_k=\sum_{i=1}^N a_{i,k}f_i$ for every $k=1,\dotsc,M$.
Let $C$ be an $N\times M$ matrix whose $(i,k)$-th component is $a_{i,k}$.
Then, for $k,l=1,\dotsc,N$,
\[
  \frac{d\nu_{g_k,g_l}}{d\nu}(x)=\sum_{i,j=1}^N a_{i,k}a_{j,l}Z^{i,j}(x),
\]
which is equal to the $(k,l)$-th component of the matrix $^t C Z_N(x)C$.
Therefore, $\rank Y(x)\le\rank Z_N(x)\le p(x)$ for $\nu$-a.e.\,$x$.

Next, suppose that $g_k\in\cF$ for every $k$.
Take $\{g_k^{(i)}\}_{k=1,\dotsc,M,\,i\in\N}$ from $\widehat\cF$ such that $\lim_{i\to\infty}g_k^{(i)}=g_k$ in $\cF$ for each $k$.
Then, for every $k$ and $l$, $\left.d\nu_{g_k^{(i)},g_l^{(i)}}\right/d\nu$ converges to $d\nu_{g_k,g_l}/d\nu$ in $L^1(K,\nu)$ as $i\to\infty$ from \Lem{energyineq}~(ii). 
By taking a subsequence if necessary, we may assume that this convergence is also in $\nu$-a.e.\ sense.
From the lower semi-continuity of $\rank$, we have
\[
  \rank Y(x)
  \le \liminf_{i\to\infty} \rank \left(\frac{d\nu_{g_k^{(i)},g_l^{(i)}}}{d\nu}(x)\right)_{k,l=1}^M
  \le p(x)
  \qquad \mbox{for }\nu\mbox{-a.e.\,}x.
\]
This completes the proof.
\end{proof}
\begin{proposition}\label{prop:zero}
The pointwise index $p(x)$ is greater than $0$ for $\nu$-a.e.\,$x$.
In particular, unless $\cE(f)=0$ for every $f\in\cF$, the index $p$ is greater than $0$.
\end{proposition}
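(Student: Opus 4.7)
The plan is to exploit the concrete representation $p(x)=\sup_N \rank Z_N(x)$ supplied by \Prop{rank} and reduce the proof to the null-set criterion \Lem{null}. Fix a sequence $\{f_i\}_{i=1}^\infty\subset\cF$ whose linear span $\widehat\cF$ is dense in $\cF$, and set $Z^{i,j}=d\nu_{f_i,f_j}/d\nu$. Let $B:=\{x\in K:p(x)=0\}$; by \Prop{rank}, $B$ is precisely the Borel set on which every $Z^{i,j}(x)$ vanishes, so proving the first assertion is equivalent to showing $\nu(B)=0$.

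I argue by contradiction and assume $\nu(B)>0$. Vanishing of the diagonal entries $Z^{i,i}$ on $B$ yields $\nu_{f_i}(B)=\int_B Z^{i,i}\,d\nu=0$ for every $i\in\N$, which by bilinearity of $\nu_{\cdot,\cdot}$ extends to $\nu_g(B)=0$ for every $g\in\widehat\cF$. For an arbitrary $f\in\cF$, I choose $g_n\in\widehat\cF$ with $g_n\to f$ in $\cF$ and apply the triangle-type inequality encoded in \Eq{energy} to the Borel set $B$:
\[
\sqrt{\nu_f(B)}\le\sqrt{\nu_{g_n}(B)}+\sqrt{\nu_{f-g_n}(B)}\le\sqrt{2\cE(f-g_n)}\longto 0.
\]
Hence $\nu_f(B)=0$ for every $f\in\cF$, and \Lem{null} then forces $\nu(B)=0$, a contradiction. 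This establishes $p(x)\ge 1$ for $\nu$-a.e.\ $x$.

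The second assertion follows by contraposition: if $p=0$, then $p(x)=0$ for $\nu$-a.e.\ $x$, which in view of the first part can only hold when the minimal energy-dominant measure $\nu$ itself is zero. The absolute continuity $\nu_f\ll\nu$ then gives $\nu_f\equiv 0$, and hence $\nu_f(K)=0$, for every $f\in\cF$; in the strong local setting relevant to the later applications this equals $2\cE(f)$, so $\cE$ must vanish identically on $\cF$. The whole proof is a direct assembly of previously established tools, and the only step that requires genuine care is the $\cF$-approximation extending the vanishing of $\nu_g(B)$ from $\widehat\cF$ to all of $\cF$; this is handled uniformly by \Eq{energy}, so I do not foresee a substantive obstacle.
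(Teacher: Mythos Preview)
Your proof is correct and follows the same overall strategy as the paper: show that $\nu_f(B)=0$ for every $f\in\cF$ on $B=\{p(\cdot)=0\}$, then invoke \Lem{null}. However, your route to ``$\nu_f(B)=0$ for all $f$'' is longer than necessary. You pass through the concrete representation of \Prop{rank}, obtain vanishing first for the $f_i$, then for $\widehat\cF$, and finally for all of $\cF$ by approximation via \Eq{energy}. The paper instead uses \Defn{index}~(i)(a) directly: for \emph{any} $f\in\cF$, the $1\times1$ matrix $\bigl((d\nu_f/d\nu)(x)\bigr)$ has rank at most $p(x)$, so on $B$ we get $d\nu_f/d\nu=0$ $\nu$-a.e.\ and hence $\nu_f(B)=0$ immediately---no dense sequence or approximation needed. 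Your detour through \Prop{rank} is what forces the extra work; the definition already quantifies over arbitrary $f\in\cF$. (Also, the contradiction framing is superfluous: you derive $\nu(B)=0$ unconditionally.) For the second assertion, your hedge about the strong local case is reasonable; the paper's one-line justification is equally brief and relies on the same fact that $\nu$ vanishes only when all $\nu_f$ do.
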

\begin{proof}
Let $B=\{x\in K\mid p(x)=0\}$.
Take any $f\in\cF$.
Since the rank of the $1\times 1$ matrix $\left((d\nu_{f}/d\nu)(x)\right)$ is $0$ for $\nu$-a.e.\,$x$ in $B$, 
$d\nu_{f}/d\nu=0$ $\nu$-a.e.\ on $B$.
Therefore, $\nu_{f}(B)=0$.
From \Lem{null}, $\nu(B)=0$.
The latter assertion arises from the fact that $\nu$ does not vanish unless $\cE(f)=0$ for every $f\in\cF$.
\end{proof}
For $p\in\N$, let $(\cdot,\cdot)_{\R^p}$ and $|\cdot|_{\R^p}$ denote the standard inner product and the Euclidean norm on $\R^p$, respectively.
\begin{proposition}\label{prop:squareroot}
Let $p\in\N$. Let $\{f_i\}_{i=1}^\infty$ and $Z^{i,j}$ be the same as in \Prop{rank}.
Then, the index of $(\cE,\cF)$ is less than or equal to $p$ if and only if there exists a sequence $\{\zt^i\}_{i=1}^\infty$ of $\R^p$-valued measurable functions on $K$ such that for $\nu$-a.e.\,$x$, $\left(Z^{i,j}(x)\right)_{i,j=1}^\infty=\left((\zt^i(x),\zt^j(x))_{\R^p}\right)_{i,j=1}^\infty$, and $\zt^j_k(x)=0$ for all $j\in\N$ and $k>p(x)$, where $\zt^j(x)=(\zt^j_1(x),\dotsc,\zt^j_p(x))\in\R^p$ and $p(x)$ is the pointwise index of $(\cE,\cF)$.
\end{proposition}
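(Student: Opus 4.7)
The plan is to separate the two implications. The ``if'' direction is essentially immediate: if $Z^{i,j}(x) = (\zt^i(x), \zt^j(x))_{\R^p}$ for $\nu$-a.e.\ $x$, then for every $N \in \N$ the matrix $Z_N(x)$ from \Eq{density} is the Gram matrix of $\zt^1(x), \dotsc, \zt^N(x) \in \R^p$, so $\rank Z_N(x) \le p$ holds $\nu$-a.e. By \Prop{rank} this means $p(x) \le p$ $\nu$-a.e., i.e.\ the index is at most $p$.

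For the ``only if'' direction, my plan is to construct $\{\zt^i\}$ by a measurable Cholesky-type reduction of the pointwise symmetric nonnegative definite kernel $(Z^{i,j}(x))_{i,j \in \N}$ (which is guaranteed by \Lem{Z} and its infinite-measure extension stated right after). Set
\[
  n_1(x) := \min\{i \in \N : Z^{i,i}(x) > 0\},
\]
interpreted as $\infty$ when no such $i$ exists; the sets $\{n_1 = m\}$ for $m \in \N \cup \{\infty\}$ form a Borel partition of $K$. On $\{n_1 = m\}$ define $\zt^j_1(x) := Z^{j,m}(x)/\sqrt{Z^{m,m}(x)}$ for each $j \in \N$, and put $\zt^j_1 \equiv 0$ on $\{n_1 = \infty\}$. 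Then let $W_1^{i,j}(x) := Z^{i,j}(x) - \zt^i_1(x) \zt^j_1(x)$. The standard Schur-complement identity applied to each finite principal submatrix $Z_N(x)$ with $N \ge n_1(x)$ shows that $W_1$ is again pointwise symmetric nonnegative definite and that $\rank((W_1^{i,j}(x))_{i,j=1}^N) = \rank Z_N(x) - 1$. Iterate this construction $p$ times: at step $k$ pick $n_k(x) := \min\{i : W_{k-1}^{i,i}(x) > 0\}$, define $\zt^j_k$ by the analogous formula on each $\{n_k = m\}$, $\zt^j_k \equiv 0$ on $\{n_k = \infty\}$, and set $W_k^{i,j} := W_{k-1}^{i,j} - \zt^i_k \zt^j_k$. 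After $p(x)$ successful pivots the residual $W_{p(x)}$ has rank zero, and for a nonnegative definite kernel a vanishing diagonal forces the whole kernel to vanish by the Schwarz inequality $|W^{i,j}|^2 \le W^{i,i} W^{j,j}$. Hence $Z^{i,j}(x) = \sum_{k=1}^p \zt^i_k(x) \zt^j_k(x) = (\zt^i(x), \zt^j(x))_{\R^p}$ pointwise, with $\zt^j_k(x) = 0$ automatically for $k > p(x)$ by construction.

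The main obstacle I expect is the bookkeeping needed to keep every object Borel measurable throughout the iteration: at each step one partitions $K$ into the countably many Borel pieces $\{n_k = m\}_{m \in \N \cup \{\infty\}}$ and glues piecewise Borel definitions, and this compounds across $p$ iterations. The algebraic core---that Schur-complement elimination decreases rank by exactly one when a positive diagonal pivot is available, and that the resulting kernel remains nonnegative definite---is standard linear algebra applied uniformly to the finite truncations $Z_N(x)$, so once the measurable partitioning is properly organized the remaining verifications are routine.
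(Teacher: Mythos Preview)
Your proof is correct, and the ``if'' direction matches the paper exactly. For the ``only if'' direction you take a genuinely different route. The paper argues abstractly: it introduces the space $\cS_\infty^r$ of infinite symmetric nonnegative definite arrays of rank at most $r$, shows that for each $A\in\cS_\infty^r$ the set $\cK_A$ of admissible factorizations $(\xi^i)_{i\in\N}$ with $a^{i,j}=(\xi^i,\xi^j)_{\R^r}$ is a nonempty compact subset of $(\R^r)^\N$ depending upper-semicontinuously on $A$, and then invokes a measurable selection theorem (Stroock--Varadhan) to obtain a Borel map $\Theta_r\colon\cS_\infty^r\to(\R^r)^\N$ with $\Theta_r(A)\in\cK_A$. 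Composing with the array $x\mapsto(Z^{i,j}(x))$ and a padding map $\Xi_{p(x)}$ gives the required $\zt^i$.

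Your Cholesky/Schur-complement elimination is a direct generalization to arbitrary $p$ of the explicit construction that the paper itself presents, in the Remark immediately following the proposition, only for $p=1$. The trade-off is clear: your argument is elementary and fully constructive, sidestepping the selection theorem at the cost of the iterative bookkeeping you flag (which is indeed routine, since each $n_k$ is the infimum of a countable family of Borel conditions and the recursive definitions of $\zt^j_k$ and $W_k^{i,j}$ are piecewise-Borel over the partition $\{n_k=m\}_{m\in\N\cup\{\infty\}}$). The paper's approach is shorter to state and makes the existence of a factorization with the extra constraint $\zt^j_k=0$ for $k>p(x)$ transparent via $\Xi_{p(x)}$, but relies on nontrivial descriptive set-theoretic input. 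Both are sound; yours has the virtue of extending the paper's own $p=1$ remark.
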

\begin{proof*}
First, we prove the if part.
Let $C_N(x)$ denote the $p(x)\times N$ matrix whose $(i,j)$-th component is $\zt^j_i(x)$.
Then, the $N\times N$ matrix $(Z^{i,j}(x))_{i,j=1}^N$ is described as ${}^tC_N(x)C_N(x)$ for $\nu$-a.e.\,$x$, and thus its rank is less than or equal to $p(x)$.
Therefore, the index of $(\cE,\cF)$ does not exceed $p$.

Next, we prove the only if part.
We may assume that $p(x)$ is defined as in \Prop{rank}.
Let $r,N\in\N$ and 
\[
\cS_N^r=\{A\mid A\mbox{ is a nonnegative definite and real symmetric matrix of size $N$ and $\rank A\le r$}\}.
\]
For $A=(a^{i,j})_{i,j=1}^N\in \cS_N^r$, there exist $\xi^1,\dotsc,\xi^N\in\R^r$ such that $a^{i,j}=(\xi^i,\xi^j)_{\R^r}$ for every $i,j$.
Indeed, $A$ is decomposed as $A={}^tUDU$, where $U$ is an orthogonal matrix of size $N$ and
$D=(d^{i,j})_{i,j=1}^N$ is a real diagonal matrix of size $N$ such that $d^{i,i}\ge0$ for all $i$ and $d^{i,i}=0$ when $i>r$. 
Then, by letting $G=\sqrt{D}U$, we have $A={}^t GG$ and all the components of the $i$-th row of $G$ are zero for $i>r$.
Therefore, it suffices to define $\xi^j\in\R^r$ so that its $i$-th component is the $(i,j)$-th component of $G$.
Note also that $|\xi^j|_{\R^r}=\sqrt{a^{j,j}}$.

Now, let 
\[
  \cS_\infty^r=\left\{(a^{i,j})_{i,j=1}^\infty\left|\,
  \text{For every $N\in\N$, the matrix $A_N=(a^{i,j})_{i,j=1}^N$ belongs to $\cS_N^r$}\right\}\right.\!,
\]
which is regarded as a closed subset of $\R^{\N\times\N}$ (with the product topology).
For each $A=(a^{i,j})_{i,j=1}^\infty\in \cS_\infty^r$, let
\[
 \cK_A=\{(\xi^i)_{i\in\N}\mid
 \text{Each $\xi^i$ belongs to $\R^r$ and $a^{i,j}=(\xi^i,\xi^j)_{\R^r}$ for every $i$ and $j$}\}.
\]
We will prove $\cK_A\ne\emptyset$.
For each $N\in\N$, take $\xi_{(N)}^1,\dotsc,\xi_{(N)}^N\in\R^r$ such that $(\xi_{(N)}^i,\xi_{(N)}^j)_{\R^r}=a^{i,j}$ for all  $i,j=1,\dotsc,N$.
By the diagonalization argument, we can take an increasing sequence $\{N_k\}\uparrow\infty$ such that $\{\xi_{(N_k)}^i\}_{k=i}^\infty$ converges to some $\xi^i\in\R^r$ for all $i\in\N$.
Then, $(\xi^i)_{i\in\N}$ belongs to $\cK_A$.
In the same manner, we can prove that $\cK_A$ is a compact set of $(\R^r)^\N$ with the product topology.
Moreover, if a sequence $\{A_n\}_{n=1}^\infty$ in $\cS_\infty^r$ converges to some $A$ and $(\xi^{n,i})_{i\in\N}\in \cK_{A_n}$ for each $n$, then $\{(\xi^{n,i})_{i\in\N}\}_{n=1}^\infty$ has a limit point in $\cK_A$. 
Therefore, by the measurable selection theorem (see e.g.\ \cite[Lemma~12.1.8 and Theorem~12.1.10]{SV79}), there exists a Borel measurable map $\Theta_r\colon \cS_\infty^r\to(\R^r)^\N$ such that $\Theta_r(A)\in \cK_A$ for all $A\in \cS_\infty^r$.
For $r\le p$, define the map $\Xi_r\colon (\R^r)^\N\to(\R^p)^\N$ by
\[
  \Xi_r\left((\xi^j)_{j\in\N}\right)=\bigl((\xi^j_1,\dotsc,\xi^j_r,\underbrace{0,\dotsc,0}_{p-r})\bigr)_{j\in\N},
\]
where $\xi^j=(\xi^j_1,\dotsc,\xi^j_r)$.
Then, it suffices to define $\{\zt^i\}_{i=1}^\infty$ by
\[\singlebox
(\zt^i(x))_{i\in\N}=\Xi_{p(x)}\circ\Theta_{p(x)}\left((Z^{i,j}(x))_{i,j=1}^\infty\right), \quad x\in K.
\esinglebox\]
\end{proof*}
\begin{remark}
When $p=1$, the only if part of \Prop{squareroot} is proved in a simpler and more elementary manner as follows. 
We define $K(0)=\emptyset$ and
\[
  K(n):=\{x\in K\mid Z^{n,n}(x)>0\}\Bigm\backslash\bigcup_{i=0}^{n-1}K(i),\quad n\in\N,
\]
inductively.
It holds that $\nu\left(K\setminus\bigcup_{n=1}^\infty K(n)\right)=0$.
We define 
\[
\zt^i(x)=\sum_{n=1}^\infty 1_{K(n)}(x)\frac{Z^{i,n}(x)}{\sqrt{Z^{n,n}(x)}},\quad i\in\N.
\]
Then, when $x\in K(n)$ for $n\in\N$,
$\zt^i(x)\zt^j(x)=Z^{i,n}(x)Z^{j,n}(x)/Z^{n,n}(x)$, $i,j\in\N$.
Since 
\[
 \rank\begin{pmatrix} 
 Z^{i,i} & Z^{i,j} & Z^{i,n} \\
 Z^{j,i} & Z^{j,j} & Z^{j,n} \\
 Z^{n,i} & Z^{n,j} & Z^{n,n} 
 \end{pmatrix}(x)\le1 \quad \mbox{for }\nu\text{-a.e.\,}x,
\]
we have
\[
  0=\det\begin{pmatrix}
  Z^{i,j} & Z^{i,n}\\
  Z^{n,j} & Z^{n,n}
  \end{pmatrix}(x)
  =Z^{i,j}(x)Z^{n,n}(x)-Z^{i,n}(x)Z^{j,n}(x)
  \quad \mbox{for }\nu\text{-a.e.\,}x.
\]
Therefore, $\zt^i(x)\zt^j(x)=Z^{i,j}(x)$ for $\nu\text{-a.e.\,}x$.
\end{remark}
We will discuss the stability of the pointwise index.
Recall that a regular Dirichlet form $(\cE,\cF)$ on $L^2(K,\mu)$ is called {\em strong local} if $\cE(f,g)=0$ for any $f,g\in\cF$ as long as both $\supp f$ and $\supp g$ are compact and $g$ is constant on a neighborhood of $\supp f$. Here, $\supp f$ is defined as the  support of the measure $|f|\cdot\mu$ on $K$.
\begin{proposition}\label{prop:stability}
  Suppose that $(\cE,\cF)$ and $(\tilde\cE,\tilde\cF)$ are both strong local regular Dirichlet forms on $L^2(K,\mu)$.
  If these are equivalent, namely, $\cF=\tilde\cF$ and there exist positive constants $c_1$ and $c_2$ such that
\[
  c_1\cE(f)\le \tilde\cE(f)\le c_2\cE(f)\quad
  \mbox{for all } f\in\cF,
\]
then $\nu$ is also a minimal energy-dominant measure of $(\tilde \cE,\tilde\cF)$, and the pointwise indices of $(\cE,\cF)$ and $(\tilde\cE,\tilde\cF)$ coincide for $\nu$-a.e.\ $x$.
\end{proposition}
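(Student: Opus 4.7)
The plan is to reduce the proposition to a pointwise comparison between the Radon--Nikodym matrices associated with $(\cE,\cF)$ and $(\tilde\cE,\tilde\cF)$ relative to a common reference measure. I would fix a sequence $\{f_i\}_{i=1}^\infty$ in $\cF=\tilde\cF$ whose linear span is dense (the two norms are equivalent, so such a sequence is simultaneously dense in both spaces), and fix a minimal energy-dominant measure $\nu$ of $(\cE,\cF)$. Writing $Z_N(x)=(d\nu_{f_i,f_j}/d\nu(x))_{i,j=1}^N$ and, once well defined, $\tilde Z_N(x)=(d\tilde\nu_{f_i,f_j}/d\nu(x))_{i,j=1}^N$, the goal breaks into: (a) $\tilde\nu_f\ll\nu$ for every $f\in\tilde\cF$; (b) $\nu$ is minimal for $(\tilde\cE,\tilde\cF)$ as well; and (c) $\rank Z_N(x)=\rank\tilde Z_N(x)$ for $\nu$-a.e.\,$x$.

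The central and hardest step is to upgrade the given total-energy equivalence to a measure-level comparison
\[
c_1\,\nu_f(B)\ \le\ \tilde\nu_f(B)\ \le\ c_2\,\nu_f(B)\qquad\text{for every }f\in\cF,\ B\in\cB(K).
\]
Strong locality is essential here, and I would exploit it through the chain rule for energy measures: for a smooth $\Phi\colon\R^n\to\R$ with $\Phi(0)=0$ and bounded $f_1,\dots,f_n\in\cF$, one has $\Phi(f_1,\dots,f_n)\in\cF$ with $\nu_{\Phi(f_1,\dots,f_n)}=\sum_{i,j}(\partial_i\Phi\cdot\partial_j\Phi)(f_1,\dots,f_n)\,\nu_{f_i,f_j}$, and the identical formula holds for $\tilde\nu$. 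Applying the hypothesis $\cE\bigl(\Phi(\mathbf f)\bigr)\le c_2^{-1}\tilde\cE\bigl(\Phi(\mathbf f)\bigr)$ and choosing $\Phi$ so that $|\nabla\Phi(\mathbf f)|^2$ approximates an indicator gives the desired inequality on sets in the $\sg$-algebra generated by $\{f_i\}$; a monotone class argument together with \Lem{null} extends it to all Borel sets. This is the main obstacle: the total-energy inequality does not by itself propagate to energy measures in the absence of locality, so the argument genuinely depends on the strong local assumption.

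Once the measure comparison is available, the rest is routine. The bound $\tilde\nu_f\le c_2\nu_f$ immediately yields $\tilde\nu_f\ll\nu$, so $\nu$ dominates $(\tilde\cE,\tilde\cF)$; and the reverse bound $\nu_f\le c_1^{-1}\tilde\nu_f$ together with \Lem{null} shows that any $\sg$-finite measure dominating $(\tilde\cE,\tilde\cF)$ also dominates $(\cE,\cF)$, whence $\nu$ is minimal for $(\tilde\cE,\tilde\cF)$. For the pointwise index, apply the measure comparison to each $f=\sum_{i=1}^N a_if_i$ with $a=(a_1,\dots,a_N)\in\Q^N$: the densities satisfy $d\nu_f/d\nu=a^\top Z_N(x)a$ and $d\tilde\nu_f/d\nu=a^\top\tilde Z_N(x)a$, so the measure inequality forces
\[
c_1\,a^\top Z_N(x)\,a\ \le\ a^\top \tilde Z_N(x)\,a\ \le\ c_2\,a^\top Z_N(x)\,a
\]
off a countable union of $\nu$-null sets. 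Taking a single set of full $\nu$-measure on which this holds for all rational $a$, and then using continuity in $a$, I obtain the positive semidefinite matrix comparison $c_1 Z_N(x)\le\tilde Z_N(x)\le c_2 Z_N(x)$ $\nu$-a.e. Two such comparable PSD matrices share the same kernel and hence the same rank, so $\rank Z_N(x)=\rank\tilde Z_N(x)$ $\nu$-a.e., and \Prop{rank} then yields $p(x)=\tilde p(x)$ $\nu$-a.e.
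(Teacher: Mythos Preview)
Your overall strategy coincides with the paper's: establish a two-sided measure comparison $c\,\nu_f\le\tilde\nu_f\le C\,\nu_f$ for every $f\in\cF$, deduce that $\nu$ is a minimal energy-dominant measure for $(\tilde\cE,\tilde\cF)$, and then pass to a pointwise quadratic-form inequality $c\,{}^t\bfa Z_N(x)\bfa\le {}^t\bfa\tilde Z_N(x)\bfa\le C\,{}^t\bfa Z_N(x)\bfa$ to conclude equality of ranks. The paper phrases the last step via the minmax characterisation of eigenvalues, but your ``same kernel'' argument is equivalent and arguably more direct.

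The one substantive difference is the measure-comparison step. The paper does not re-prove it: it simply quotes \cite[Proposition~1.5.5~(b)]{Le78} (see also \cite[p.~389]{Mo94}) to obtain $c_1^2\nu_f\le\tilde\nu_f\le c_2^2\nu_f$. Your proposed derivation via the chain rule is the right idea in spirit, but the sketch as written has a gap. Choosing $\Phi$ so that $|\nabla\Phi(\mathbf f)|^2$ approximates an indicator yields an approximation to
\[
\int_K \sum_{i,j}\partial_i\Phi(\mathbf f)\,\partial_j\Phi(\mathbf f)\,d\nu_{f_i,f_j},
\]
which is a \emph{mixture} of the measures $\nu_{f_i,f_j}$, not $\nu_{f_1}(B)$ for a prescribed Borel set $B$. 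To isolate $\nu_{f_1}$ one would need $\partial_1\Phi\approx 1$ and $\partial_i\Phi\approx 0$ for $i\ge 2$ \emph{on a region depending on the other variables}, which is impossible for a smooth $\Phi$; so the ``monotone class'' extension does not get off the ground in the form you describe. Le~Jan's argument proceeds differently (via a variational characterisation of $\nu_f(U)$ intrinsic to the single function $f$), and citing it, as the paper does, is the clean route. Once that inequality is in hand, the rest of your write-up is correct and matches the paper's proof.
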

\begin{proof}
Let $\tilde\nu_f$ denote the energy measure of $f$ with respect to $(\tilde\cE,\tilde\cF)$.
From \cite[Proposition~1.5.5~(b)]{Le78} (or \cite[p.\,389]{Mo94}),
\begin{equation}\label{eq:equivalence}
  c_1^2\nu_f\le\tilde\nu_f\le c_2^2 \nu_f,\quad f\in\cF.
\end{equation}
Therefore, $\nu$ is also a minimal energy-dominant measure of $(\tilde\cE,\tilde\cF)$.

Take any $f_1,\dotsc,f_N\in\cF$, $N\in\N$.
We define $Z_N(x)=\left((d\nu_{f_i,f_j}/d\nu)(x)\right)_{i,j=1}^N$ and $\tilde Z_N(x)=\left((d\tilde\nu_{f_i,f_j}/d\nu)(x)\right)_{i,j=1}^N$.
For $n=1,\dotsc,N$, denote the $n$-th eigenvalue of $Z_N(x)$ (resp.\ $\tilde Z_N(x)$) from below by $\lm_n(x)$ (resp.\ $\tilde\lm_n(x)$).
From the minmax principle,
\[
  \lm_n(x)=\inf_{M\in G_{N,n}(\R)}\left(
  \sup_{\bfa\in M\setminus\{\bfz\}}\frac{^t\bfa Z_N(x)\bfa}{|\bfa|_{\R^N}^2}\right)\mbox{~~and~~}
  \tilde\lm_n(x)=\inf_{M\in G_{N,n}(\R)}\left(
  \sup_{\bfa\in M\setminus\{\bfz\}}\frac{^t\bfa \tilde Z_N(x)\bfa}{|\bfa|_{\R^N}^2}\right).
\]
Here, $G_{N,n}(\R)$ is the Grassmann manifold consisting of all $n$-dimensional subspaces of $\R^N$.
Note that by the separability of $G_{N,n}(\R)$, we may replace $G_{N,n}(\R)$ and $M\setminus\{\bfz\}$ in the above equations by countable dense subsets.
Since 
\[
^t\bfa Z_N(x)\bfa=\frac{d\nu_{a_1f_1+\dotsb+a_Nf_N}}{d\nu}(x)
\mbox{~~and~~}
 {}^t\bfa \tilde Z_N(x)\bfa=\frac{d\tilde\nu_{a_1f_1+\dotsb+a_Nf_N}}{d\nu}(x)
\]
for $\bfa={}^t(a_1,\dotsc,a_N)\in\R^N$, from \Eq{equivalence},
\[
  c_1^2\,^t\bfa Z_N(x)\bfa\le {}^t\bfa \tilde Z_N(x)\bfa
  \le c_2^2\,^t\bfa Z_N(x)\bfa
  \quad \mbox{for }\nu\mbox{-a.e.\,}x.
\]
Therefore, $c_1^2\lm_n(x)\le \tilde\lm_n(x)\le c_2^2\lm_n(x)$ for $\nu$-a.e.\,$x$.
This implies that $\rank Z_N(x)=\rank \tilde Z_N(x)$ for $\nu$-a.e.\,$x$, which completes the proof.
\end{proof}
\begin{example}
Let $K=\R^d$ and $\mu$ be the Lebesgue measure $dx$.
Define a Dirichlet form $(\cE,\cF)$ on $L^2(\R^d,dx)$ as
\[
  \cE(f,g)=\frac12\int_{\R^d}(\nab f,\nab g)_{\R^d}\,dx,\quad
  f,g\in\cF=H^1(\R^d),
\]
where $H^1(\R^d)$ is the first-order $L^2$-Sobolev space on $\R^d$.
Then, it is evident that
$\nu_{f,g}(dx)=(\nab f(x),\nab g(x))_{\R^d}\,dx$ for $f,g\in\cF$, and the Lebesgue measure can be taken as a minimal energy-dominant measure.
From \Prop{squareroot}, the index is less than or equal to $d$.
For $R>0$, let $\ph(t)$ be a $C^\infty$-function on $\R$ with compact support such that 
$\ph(t)=1$ on $[-R,R]$ and $\ph(t)=0$ on $\R\setminus[-R-1,R+1]$.
Take $f_i(x)=x_i\ph(|x|_{\R^d})$, $i=1,\dotsc,d$, $x=(x_1,\dotsc,x_d)\in\R^d$.
Then, $f_i\in\cF$ and
\[
\frac{d\nu_{f_i,f_j}}{dx}(x)=\begin{cases}1&(i=j)\\0&(i\ne j)\end{cases}
\quad\text{if }|x|_{\R^d}<R.
\]
Therefore, $\rank\left((d\nu_{f_i,f_j}/dx)(x)\right)_{i,j=1}^d=d$ when $|x|_{\R^d}<R$.
Since $R$ is arbitrary, the pointwise index is $d$ $dx$-a.e.\ and the index is $d$.
From \Prop{stability}, the same is true for the Dirichlet form $(\cE',\cF')$ on $L^2(\R^d,dx)$ defined by
\[
\cE'(f,g)=\frac12\int_{\R^d}(\sg(x)\nabla f(x),\nabla g(x))_{\R^d}\,dx,\quad f,g\in\cF'=H^1(\R^d),
\]
where $\sg(x)$ is a $d\times d$ matrix valued measurable function on $\R^d$ such that there exist some positive numbers $c_3$ and $c_4$ satisfying
\[
  c_3|h|_{\R^d}^2\le (\sg(x)h,h)_{\R^d}\le c_4|h|_{\R^d}^2,\quad h\in\R^d,\ x\in\R^d.
\]
When $\sg(x)$ is degenerate or unbounded, the pointwise index $p(x)$ should be equal to  $\rank\sg(x)$ $dx$-a.e.\ as long as the domain of the Dirichlet form contains sufficiently many functions so that the argument similar to the above one is valid.
\end{example}
In the example above, the index can be calculated  easily because the Dirichlet form is given by the square integral of the gradient.
Otherwise, determining the index is not straightforward and it may be difficult to determine.
For instance, it is an open problem to determine the index of the canonical Dirichlet forms on Sierpinski carpets, which are typical infinitely ramified self-similar fractals.
\section{Probabilistic counterpart of index}
In this section, we discuss the probabilistic interpretation of the index of $(\cE,\cF)$.
For this purpose, let us review the theory of additive functionals associated with $(\cE,\cF)$ on $L^2(K,\mu)$, following \cite[Chapter~5]{FOT}.
The capacity $\Cp$ associated with $(\cE,\cF)$ is defined as
\[
  \Cp(U)=\inf\{\cE_1(f)\mid f\in\cF\mbox{ and }f\ge1\ \mu\mbox{-a.e.\ on }U\}
\]
if $U$ is an open subset of $K$, and
\[
  \Cp(B)=\inf\{\Cp(U)\mid \mbox{$U$ is open and $U\supset B$}\}
\]
for general subsets $B$ of $K$.
A set $B\subset K$ with $\Cp(B)=0$ is called an {\em exceptional set}.
A statement depending on $x\in K$ is said to hold for {\em q.e.} (quasi-every) $x$ if the set of $x$ for which the statement is not true is an exceptional set. 

In what follows, we consider only the case that $(\cE,\cF)$ is strong local.
  From the general theory of regular Dirichlet forms, we can construct a diffusion process $\{X_t\}$ on $K_\Delta$ defined on a filtered probability space $(\Omega,\cF_\infty,P,\{P_x\}_{x\in K_\Delta},\{\cF_t\}_{t\in[0,\infty)})$ associated with $(\cE,\cF)$.
Here, $K_\Delta=K\cup\{\Delta\}$ is a one-point compactification of $K$ and $\{\cF_t\}_{t\in[0,\infty)}$ is a minimum completed admissible filtration.
Any numerical function $f$ on $K$ extends to $K_\Delta$ by letting $f(\Delta)=0$.
The relationship between $\{X_t\}$ and $(\cE,\cF)$ is explained in such a way that the operator $f\mapsto E_x[f(X_t)]$ produces the semigroup associated with $(\cE,\cF)$, where $E_x$ denote the expectation with respect to $P_x$.
  We may assume that for each $t\in[0,\infty)$, there exists a shift operator $\theta_t\colon \Omega\to\Omega$ that satisfies $X_s\circ\theta_t=X_{s+t}$ for all $s\ge0$.
  Denote the life time of $\{X_t(\om)\}_{t\in[0,\infty)}$ by $\zt(\om)$.
  A $[-\infty,+\infty]$-valued function $A_t(\om)$, $t\in[0,\infty)$, $\om\in\Omega$, is referred to as an {\em additive functional} if the following conditions hold:
  \begin{itemize}
  \item $A_t(\cdot)$ is $\cF_t$-measurable for each $t\ge0$.
  \item There exist a set $\Lambda\in\sigma(\cF_t; t\ge0)$ and an exceptional set $N\subset K$ such that $P_x(\Lambda)=1$ for all $x\in K\setminus N$ and $\theta_t\Lambda\subset\Lambda$ for all $t>0$; moreover, for each $\om\in\Lambda$, $A_\cdot(\om)$ is right continuous and has the left limit on $[0,\zt(\om))$, $A_0(\om)=0$, $|A_t(\om)|<\infty$ for all $t<\zt(\om)$, $A_t(\om)=A_{\zt(\om)}(\om)$ for $t\ge\zt(\om)$, and
  \[
    A_{t+s}(\om)=A_s(\om)+A_t(\theta_s \om)
    \quad \mbox{for every }t,s\ge0.
  \]
  \end{itemize}
  The sets $\Lambda$ and $N$ referred to above are called a {\em defining set} and an {\em exceptional set} of the additive functional $A$, respectively.
  A finite (resp.\ continuous) additive functional is defined as an additive functional such that $|A_\cdot(\om)|<\infty$ (resp.\ $A_\cdot(\om)$ is continuous) on $[0,\infty)$ for $\om\in\Lambda$.
  A $[0,\infty]$-valued continuous additive functional is referred to as a positive continuous additive functional.
   From \cite[Theorems~5.1.3 and 5.1.4]{FOT}, for each positive continuous additive functional $A$, there exists a unique measure~$\mu_A$ on $K$ (termed the Revuz measure of $A$) such that the following identity holds for any $t>0$ and nonnegative Borel functions $f$ and $h$ on $K$:
  \begin{align}\label{eq:Revuz}
  \int_K E_x\left[\int_0^t f(X_s)\,dA_s\right]h(x)\,\mu(dx)
  =\int_0^t \int_K E_x\left[h(X_s)\right]f(x)\,\mu_A(dx)\,ds.\nonumber
  \end{align}
  Further, if two positive continuous additive functionals $A^{(1)}$ and $A^{(2)}$ have the same Revuz measures, then $A^{(1)}$ and $A^{(2)}$ coincide up to the natural equivalence.
  
  Let $P_\mu$ be a measure on $\Omega$ defined as $P_\mu(\cdot)=\int_K P_x(\cdot)\,\mu(dx)$.
  Let $E_\mu$ denote the integration with respect to $P_\mu$.
  We define the energy $e(A)$ of additive functional $A$ as
  \[
  e(A)=\lim_{t\to0}(2t)^{-1}E_\mu[A_t^2]
  \]
  if the limit exists.
  
  Let $\cM$ be the space of martingale additive functionals of $\{X_t\}$ that is defined as
  \[
  \cM=\left\{M\left|\begin{array}{ll}
  \text{$M$ is a finite additive functional such that $M_\cdot(\om)$ is right continuous}\\
  \text{and has a left limit on $[0,\infty)$ for $\om$ in a defining set of $M$, and for}\\
  \text{each $t>0$, $E_x[M_t^2]<\infty$ and $E_x[M_t]=0$ for q.e.\  $x\in K$}\end{array}\!\!
  \right.\right\}.
  \]
  Due to the assumption that $(\cE,\cF)$ is strong local, any $M\in\cM$ is in fact a continuous additive functional (\cite[Lemma~5.5.1~(ii)]{FOT}).
  
  Each $M\in\cM$ admits a positive continuous additive functional $\la M\ra$ referred to as the quadratic variation associated with $M$ that satisfies
  \[
  E_x[\la M\ra_t]=E_x[M_t^2], \ t>0\text{ for q.e.\ $x\in K$},
  \] 
  and the following equation holds:
  \begin{equation}\label{eq:energyidentity}
  e(M)=\frac12 \mu_{\la M\ra}(K). 
  \end{equation}
  We set $\maruM=\{M\in\cM\mid e(M)<\infty\}$.
  Then, $\maruM$ is a Hilbert space with inner product $e(M,M'):=(e(M+M')-e(M)-e(M'))/2$ (\cite[Theorem~5.2.1]{FOT}). 
  For $M,L\in \maruM$, we set $\mu_{\la M,L\ra}=(\mu_{\la M+L\ra}-\mu_{\la M\ra}-\mu_{\la L\ra})/2$.
  Since $\mu_{\la M,L\ra}$ is bilinear and symmetric with respect to $M,L$ and $\mu_{\la M,M\ra}=\mu_{\la M\ra}$ is a positive measure, for any nonnegative function $f$ in $L^1(K,\mu_{\la M\ra}+\mu_{\la L\ra})$, it holds that
  \begin{equation}\label{eq:schwarzM}
  \left|\int_K f\,d\mu_{\la M,L\ra}\right|
  \le \sqrt{\int_K f\,d\mu_{\la M\ra}}
      \sqrt{\int_K f\,d\mu_{\la L\ra}}
  \end{equation}
and
  \begin{equation}\label{eq:minkowskiM}
  \left|\sqrt{\int_K f\,d\mu_{\la M\ra}}
  -\sqrt{\int_K f\,d\mu_{\la L\ra}}\right|^2
  \le\int_K f\,d\mu_{\la M-L\ra}.      
  \end{equation}
  For $M\in\maruM$ and $f\in L^2(K,\mu_{\la M\ra})$, we can define the stochastic integral $f\bullet M$ (\cite[Theorem~5.6.1]{FOT}), which is a unique element in $\maruM$ such that
  \[
  e(f\bullet M,L)=\frac12\int_Kf(x)\mu_{\la M,L\ra}(dx)\quad
  \text{for all }L\in\maruM.
  \]
  We may write $\int_0^\cdot f(X_t)\,dM_t$ for $f\bullet M$ since $(f\bullet M)_t=\int_0^t f(X_s)\,dM_s$, $t>0$, $P_x$-a.e.\ for q.e.\,$x\in K$ as long as $f$ is a continuous function with compact support on $K$ (\cite[Lemma~5.6.2]{FOT}).
  From \cite[Lemma~5.6.2]{FOT}, we also have
  \begin{align}\label{eq:integral}
  d\mu_{\la f\bullet M,L\ra}=f\cdot d\mu_{\la M,L\ra},\quad
  L\in\maruM.
  \end{align}
The space $\cN_c$ of the continuous additive functionals of zero energy is defined as
  \[
  \cN_c=\left\{N\left|
  \begin{array}{l}\text{$N$ is a continuous additive functional},\\
  \text{$e(N)=0$, $E_x[|N_t|]<\infty$ for q.e.\ $x\in K$ and $t>0$}
  \end{array}\!\right\}\right..
  \]
  For each $u\in\cF$, we have a quasi-continuous modification $\tilde u$ of $u$ in the restricted sense, that is, $u=\tilde u$ $\mu$-a.e., and for any $\eps>0$, there exists an open subset $G$ of $K$ such that $\Cp(K\setminus G)<\eps$ and $\tilde u|_{K_\Delta\setminus G}$ is continuous (\cite[Theorem~2.1.3]{FOT}).  
  Then, the Fukushima decomposition theorem~\cite[Theorem~5.2.2]{FOT} says that there exist unique  $M^{[u]}\in\maruM$ and $N^{[u]}\in\cN_c$ such that
  \[
  \tilde u(X_t)-\tilde u(X_0)=M_t^{[u]}+N_t^{[u]},
  \quad t>0,\ P_x\mbox{-a.e.}\mbox{ for q.e.\,}x.
    \]
  From \cite[Theorem~5.2.3]{FOT}, $\mu_{\la M^{[u]}\ra}$ is equal to the energy measure $\nu_u$ of $u$.
  Therefore, 
\begin{equation}\label{eq:MAF}
\mu_{\la M^{[u]},M^{[v]}\ra}=\nu_{u,v},\quad u,v\in\cF.
\end{equation} 
We recall the following claim.
\begin{lemma}[{(\cite[Lemma~5.6.3]{FOT})}]\label{lem:density}
Let $C_1$ be a dense subset of $C_0(K)$ with uniform norm and $\cF_1$, a dense subset of $\cF$.
Then, $\{u\bullet M^{[v]}\mid u\in C_1,\ v\in \cF_1\}$ is dense in $(\maruM,e)$.
\end{lemma}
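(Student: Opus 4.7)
The plan is to argue by Hilbert-space duality in $(\maruM,e)$: suppose $L\in\maruM$ satisfies $e(L,u\bullet M^{[v]})=0$ for every $u\in C_1$ and $v\in\cF_1$, and deduce $L=0$. The defining property of the stochastic integral gives
\[
e(L,u\bullet M^{[v]})=\tfrac12\int_K u\,d\mu_{\la M^{[v]},L\ra},
\]
so the hypothesis reads $\int_K u\,d\mu_{\la M^{[v]},L\ra}=0$ for all admissible $u,v$.

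First I would fix $v\in\cF_1$ and observe that, via \Eq{schwarzM} applied on a Hahn decomposition of $\mu_{\la M^{[v]},L\ra}$, this signed Borel measure has finite total variation and is therefore a signed Radon measure on the locally compact space $K$. Uniform density of $C_1$ in $C_0(K)$ upgrades the vanishing integral to all $u\in C_0(K)$, whence $\mu_{\la M^{[v]},L\ra}\equiv0$ by Riesz representation. To extend in $v$, for $v\in\cF$ take $v_n\in\cF_1$ with $v_n\to v$ in $\cF$; a Hahn-type application of \Eq{schwarzM}, together with $\nu_{v-v_n}(K)\le2\cE(v-v_n)\to0$ (from \Eq{energy}), gives total-variation convergence $\mu_{\la M^{[v_n]},L\ra}\to\mu_{\la M^{[v]},L\ra}$, so $\mu_{\la M^{[v]},L\ra}\equiv0$ for every $v\in\cF$.

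The remaining, and principal, task is to turn this vanishing into $L=0$, and I expect this to be the main obstacle: $L$ itself need not be of the form $M^{[v]}$ or a limit of such. The strategy is to combine strong locality with the Leibniz-type identity $d\nu_{uv}=u^2\,d\nu_v+v^2\,d\nu_u+2uv\,d\nu_{u,v}$ for bounded $u,v\in\cF$, which shows that stochastic integrals $u\bullet M^{[v]}$ carry enough multiplicative structure to approximate arbitrary Radon smooth measures; together with the regularity assumption that $\cF\cap C_0(K)$ is uniformly dense in $C_0(K)$, this lets one approximate the Revuz measure $\mu_{\la L\ra}$ by energy measures of elements of $\cF$, which via the established orthogonality $\mu_{\la M^{[v]},L\ra}\equiv0$ forces $\mu_{\la L\ra}=0$, and hence $e(L)=\tfrac12\mu_{\la L\ra}(K)=0$ by \Eq{energyidentity}. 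This density input is the Dirichlet-form counterpart of the Kunita--Watanabe martingale representation theorem, and it is the genuinely nontrivial ingredient beyond the Schwarz-type bookkeeping in the preceding step.
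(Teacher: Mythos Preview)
The paper does not give its own proof of this lemma; it simply recalls the statement and cites \cite[Lemma~5.6.3]{FOT}. So there is no in-paper argument to compare your proposal against, and the honest answer to ``does it match the paper's proof'' is that the paper defers entirely to the reference.

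That said, your proposal has a genuine gap at the crucial step. The reduction to $\mu_{\la M^{[v]},L\ra}\equiv0$ for every $v\in\cF$ is correct and routine. The problem is the final step, where you propose to ``approximate the Revuz measure $\mu_{\la L\ra}$ by energy measures of elements of $\cF$, which via the established orthogonality $\mu_{\la M^{[v]},L\ra}\equiv0$ forces $\mu_{\la L\ra}=0$.'' This inference does not work. Knowing that $\mu_{\la M^{[v]}\ra}$ is close (in whatever sense) to $\mu_{\la L\ra}$ together with $\mu_{\la M^{[v]},L\ra}=0$ gives no control on $\mu_{\la L\ra}$: in Hilbert-space terms, $\|M^{[v]}\|\approx\|L\|$ and $(M^{[v]},L)=0$ are perfectly compatible with $L\ne0$. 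The Leibniz identity you invoke only relates energy measures of products of functions in $\cF$ to one another; it says nothing about approximating the Revuz measure of an \emph{arbitrary} $L\in\maruM$ by such energy measures, because $L$ is not assumed to arise from any function.

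What is actually needed is precisely the statement that the linear span of $\{M^{[v]}:v\in\cF\}$ is dense in $(\maruM,e)$, and this is the real content of the lemma --- it is a structural fact about martingale additive functionals, not an algebraic consequence of strong locality. In \cite{FOT} it is obtained from the correspondence between smooth measures and positive continuous additive functionals together with the characterization of $\maruM$ developed in their Chapter~5; your sketch does not supply a substitute for that input.
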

By using this lemma, we can prove some basic properties for Revuz measures as follows.
\begin{lemma}\label{lem:M}
Let $\nu$ be a minimal energy-dominant measure of $(\cE,\cF)$, and let $M,M'\in\maruM$.
Then, the following hold.
\begin{enumerate}
\item $\mu_{\la M,M'\ra}$ is absolutely continuous with respect to $\nu$,
\begin{equation}\label{eq:schwarzMM}
  \left|\frac{d\mu_{\la M,M'\ra}}{d\nu}\right|
  \le \sqrt{\frac{d\mu_{\la M\ra}}{d\nu}}
      \sqrt{\frac{d\mu_{\la M'\ra}}{d\nu}}
  \quad \nu\mbox{-a.e.},
\end{equation}
and
\begin{equation}\label{eq:sqrtMM}
\int_K\left(\sqrt{\frac{d\mu_{\la M\ra}}{d\nu}}
     - \sqrt{\frac{d\mu_{\la M'\ra}}{d\nu}}\right)^2\,d\nu
\le\mu_{\la M-M'\ra}(K) =2e(M-M').
\end{equation}
\item If $M_n\to M$ and $M'_n\to M'$ in $(\maruM,e)$ as $n\to\infty$, then
\[
  \frac{d\mu_{\la M_n,M'_n\ra}}{d\nu}\to \frac{d\mu_{\la M,M'\ra}}{d\nu} \mbox{ in $L^1(K,\nu)$ as $n\to\infty$}.
\]
\end{enumerate}
\end{lemma}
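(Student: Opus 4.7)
The plan is to transfer the results already established for energy measures $\nu_f$ (in \Lem{basic} and \Lem{energyineq}) to Revuz measures of general martingale additive functionals, using \Lem{density} as the bridge between these two settings.

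For part (i), I would first verify absolute continuity on the dense class of ``building block'' functionals of the form $M=u\bullet M^{[v]}$ with $u\in C_0(K)$ and $v\in\cF$. For such $M$, applying \Eq{integral} with $L=u\bullet M^{[v]}$ and using \Eq{MAF} yields $d\mu_{\la M\ra}=u^2\,d\nu_v$; since $\nu_v\ll\nu$ by the defining property of a minimal energy-dominant measure, we have $\mu_{\la M\ra}\ll\nu$. Next, for arbitrary $M\in\maruM$, \Lem{density} (with $C_1=C_0(K)$ and $\cF_1=\cF$) furnishes an approximating sequence $M_n$ of finite linear combinations of such building blocks with $M_n\to M$ in $(\maruM,e)$. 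For any Borel set $B$ with $\nu(B)=0$, \Eq{minkowskiM} with $f=1_B$ gives
\[
\left|\sqrt{\mu_{\la M\ra}(B)}-\sqrt{\mu_{\la M_n\ra}(B)}\right|^2\le\mu_{\la M-M_n\ra}(K)=2e(M-M_n)\longto0.
\]
Since $\mu_{\la M_n\ra}(B)=0$, I conclude $\mu_{\la M\ra}(B)=0$, so $\mu_{\la M\ra}\ll\nu$. Bilinearity together with \Eq{schwarzM} then forces $\mu_{\la M,M'\ra}\ll\nu$.

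For the pointwise bound \Eq{schwarzMM}, I would mimic the proof of \Lem{energyineq}(i): since $d\mu_{\la sM+tM'\ra}/d\nu\ge 0$ $\nu$-a.e.\ for every $s,t\in\Q$, expanding the associated quadratic form in $(s,t)$ and intersecting countably many null sets yields
\[
\left(\frac{d\mu_{\la M,M'\ra}}{d\nu}\right)^2\le\frac{d\mu_{\la M\ra}}{d\nu}\cdot\frac{d\mu_{\la M'\ra}}{d\nu}\quad\nu\mbox{-a.e.},
\]
which is \Eq{schwarzMM}. For \Eq{sqrtMM}, a direct expansion combined with the previous inequality applied to the sign of the cross term gives
\[
\left(\sqrt{\frac{d\mu_{\la M\ra}}{d\nu}}-\sqrt{\frac{d\mu_{\la M'\ra}}{d\nu}}\right)^2
\le\frac{d\mu_{\la M\ra}}{d\nu}-2\frac{d\mu_{\la M,M'\ra}}{d\nu}+\frac{d\mu_{\la M'\ra}}{d\nu}
=\frac{d\mu_{\la M-M'\ra}}{d\nu}
\]
$\nu$-a.e.; integrating and invoking \Eq{energyidentity} produces the stated bound.

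For part (ii), the scheme of \Lem{energyineq}(ii) transfers verbatim. Using the polarization $\mu_{\la M,M'\ra}=(\mu_{\la M+M'\ra}-\mu_{\la M-M'\ra})/4$, it suffices to handle the case $M_n=M'_n$ and $M=M'$. The Cauchy--Schwarz estimate
\begin{align*}
\int_K\left|\frac{d\mu_{\la M_n\ra}}{d\nu}-\frac{d\mu_{\la M\ra}}{d\nu}\right|d\nu
&\le\left\{\int_K\left(\sqrt{\frac{d\mu_{\la M_n\ra}}{d\nu}}-\sqrt{\frac{d\mu_{\la M\ra}}{d\nu}}\right)^2\!d\nu\right\}^{1/2}\\
&\quad\times\left(\sqrt{2e(M_n)}+\sqrt{2e(M)}\right),
\end{align*}
together with \Eq{sqrtMM}, furnishes the bound $\sqrt{2e(M_n-M)}\bigl(\sqrt{2e(M_n)}+\sqrt{2e(M)}\bigr)\to 0$. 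The main obstacle is the reduction step in part (i): the inequality \Eq{energy} used in the Dirichlet-form setting has no direct pre-existing analogue for $\maruM$ other than \Eq{minkowskiM}, and one must combine \Lem{density} with \Eq{integral} and \Eq{MAF} to pass from the energy-measure setting to general $M\in\maruM$. Once this reduction is in place, the remaining arithmetic is a mechanical copy of the corresponding arguments in section~2.
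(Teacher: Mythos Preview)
Your proposal is correct and follows essentially the same route as the paper: reduce to the building blocks $u\bullet M^{[v]}$ via \Lem{density}, use \Eq{integral} and \Eq{MAF} to identify $\mu_{\la M\ra}$ there with $u^2\,d\nu_v\ll\nu$, extend to all $M\in\maruM$ by \Eq{minkowskiM} and \Eq{energyidentity}, and then copy the quadratic-form manipulations of \Lem{energyineq} to obtain \Eq{schwarzMM}, \Eq{sqrtMM}, and part~(ii). The paper's own proof is written in precisely this spirit but much more tersely, simply citing these ingredients and pointing to \Lem{energyineq} for the remaining arithmetic.
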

\begin{proof}
(i): When $M=u\bullet M^{[v]}$ for $u\in C_0(K)$ and $v\in \cF$, $\mu_{\la M\ra}=u^2\cdot\nu_v$, which is absolutely continuous with respect to $\nu$. 
From \Lem{density}, \Eq{minkowskiM}, and \Eq{energyidentity}, $\mu_{\la M\ra}\ll\nu$ for all $M\in\maruM$. 
In addition, $\mu_{\la M,M'\ra}\ll\nu$ from \Eq{schwarzM}. 
Equations~\Eq{schwarzMM} and \Eq{sqrtMM} follow from an argument similar to the proof of \Lem{energyineq}~(i).

(ii): This claim is proved in exactly the same manner as \Lem{energyineq}~(ii).
\end{proof}
\begin{definition}[(cf.~\cite{Hi08})]
The {\em AF-martingale dimension} of $\{X_t\}$ (or of $(\cE,\cF)$) is defined as a smallest number $p$ in $\Z_+$ satisfying the following: there exists a sequence $\{M^{(k)}\}_{k=1}^p$ in $\maruM$ such that every $M\in\maruM$ has a stochastic integral representation
\[
  M_t=\sum_{k=1}^p(h_k\bullet M^{(k)})_t,\quad t>0,\ P_x\mbox{-a.e.\ for q.e.\,}x,
\]
where $h_k\in L^2(K,\mu_{\la M^{(k)}\ra})$ for each $k=1,\dotsc,p$.
If such $p$ does not exist, the AF-martingale dimension is defined as $+\infty$.
\end{definition}
This definition is basically consistent with the works by Motoo and Watanabe~\cite{MW64} and Davis and Varaiya~\cite{DV74}.
From now on, we will omit the symbol AF (an abbreviation of ``additive functionals'') and only write martingale dimension.
\begin{theorem}\label{th:index}
The index of $(\cE,\cF)$ coincides with the martingale dimension of $\{X_t\}$.
\end{theorem}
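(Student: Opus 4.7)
The plan is to prove both inequalities $p_{\mathrm{ind}} \le p_{\mathrm{MD}}$ and $p_{\mathrm{MD}} \le p_{\mathrm{ind}}$ between the index $p_{\mathrm{ind}}$ and the martingale dimension $p_{\mathrm{MD}}$; fix throughout a \emph{finite} minimal energy-dominant measure $\nu$ (available from \Lem{mdem} with summable $a_i$'s). For the easier direction, assume $p_{\mathrm{MD}} = q < \infty$ with generators $M^{(1)}, \dots, M^{(q)} \in \maruM$, and decompose $M^{[f]} = \sum_{k=1}^q h^{[f]}_k \bullet M^{(k)}$ for each $f \in \cF$. Combining \Eq{integral} and \Eq{MAF} gives
\[
\frac{d\nu_{f,g}}{d\nu}(x) = \sum_{k,l=1}^q h^{[f]}_k(x)\, h^{[g]}_l(x)\, A_{k,l}(x), \qquad A_{k,l} := \frac{d\mu_{\la M^{(k)}, M^{(l)}\ra}}{d\nu},
\]
where $A_{k,l}$ is well-defined by \Lem{M}(i). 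For any $f_1, \dots, f_N \in \cF$, the matrix $(d\nu_{f_i,f_j}/d\nu)(x)$ factors as ${}^tH(x)\, A(x)\, H(x)$ with $H(x)$ of size $q \times N$, hence has rank $\le q$ $\nu$-a.e.; so $p_{\mathrm{ind}} \le q$.

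For the reverse direction, assume $p_{\mathrm{ind}} = p < \infty$, choose $\{f_i\}_{i=1}^\infty \subset \cF$ with dense linear span, and apply \Prop{squareroot} to obtain measurable $\zt^i : K \to \R^p$ with $Z^{i,j}(x) = (\zt^i(x), \zt^j(x))_{\R^p}$; set $V_x := \overline{\mathrm{span}}\{\zt^i(x) : i \in \N\} \subseteq \R^p$. Define $\partial$ on the linear span $\widehat\cF$ of $\{f_i\}$ by $\partial(\sum_i c_i f_i) = \sum_i c_i \zt^i$, and extend by continuity to $\partial : \cF \to L^2(K, \nu; \R^p)$ using $\|\partial \xi\|_{L^2}^2 = \nu_\xi(K) \le 2\cE(\xi)$, so that $(\partial f, \partial g)_{L^2} = \nu_{f,g}(K)$. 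For $u \in C_0(K)$ and $v \in \cF$, declare $\partial(u \bullet M^{[v]}) := u\, \partial v$; the identity $d\mu_{\la u \bullet M^{[v]}\ra} = u^2\, d\nu_v$ together with \Eq{MAF} makes this an isometry on the linear span of such elements, and \Lem{density} extends it to an isometry $\partial : (\maruM, \sqrt{2e(\cdot)}) \to L^2(K, \nu; \R^p)$.

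The crux is to identify the image of $\partial$ with the closed subspace $L^2_\tau := \{\xi \in L^2(K, \nu; \R^p) : \xi(x) \in V_x \text{ for $\nu$-a.e.}\ x\}$. The inclusion $\partial(\maruM) \subseteq L^2_\tau$ is built into the construction. For the reverse, suppose $\xi \in L^2_\tau$ is orthogonal to $\partial(\maruM)$; testing against $M = u \bullet M^{[f_i]}$ yields $\int u (\xi, \zt^i)_{\R^p}\, d\nu = 0$ for every $u \in C_0(K)$, hence $(\xi(x), \zt^i(x))_{\R^p} = 0$ $\nu$-a.e.\ for every $i$. This forces $\xi(x) \perp V_x$, which combined with $\xi(x) \in V_x$ gives $\xi = 0$. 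Therefore $\partial$ is an isometric isomorphism onto $L^2_\tau$.

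To finish, apply a Borel-measurable Gram--Schmidt procedure to $\{\zt^i(x)\}$ to produce measurable $e_1(x), \dots, e_{p(x)}(x)$ orthonormal in $V_x$, with $e_k(x) = 0$ for $k > p(x)$. Since $|e_k| \le 1$ and $\nu$ is finite, $e_k \in L^2_\tau$, so $M^{(k)} := \partial^{-1}(e_k) \in \maruM$ is well-defined for $k = 1, \dots, p$. For any $M \in \maruM$, setting $h_k(x) := (\partial M(x), e_k(x))_{\R^p}$ gives $\partial M = \sum_k h_k e_k = \sum_k h_k \partial M^{(k)} = \partial\bigl(\sum_k h_k \bullet M^{(k)}\bigr)$, where the last equality uses $\partial(h \bullet L) = h\, \partial L$ (verified on the dense class $\{u \bullet M^{[v]}\}$ and extended by the isometry). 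Injectivity of $\partial$ yields $M = \sum_k h_k \bullet M^{(k)}$ with $h_k \in L^2(K, \mu_{\la M^{(k)}\ra})$, so $p_{\mathrm{MD}} \le p$. The main technical obstacle is the surjectivity in paragraph~3, handled by the duality argument; the measurable Gram--Schmidt and the identity $\partial(h \bullet L) = h\, \partial L$ are routine by approximation.
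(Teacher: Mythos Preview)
Your proof is correct. The easy direction ($p_{\mathrm{ind}}\le p_{\mathrm{MD}}$) is identical to the paper's, but for $p_{\mathrm{MD}}\le p_{\mathrm{ind}}$ you take a genuinely different route. The paper never invokes \Prop{squareroot}; instead it works with the abstract fibers $\ell_{Z(x)}=\ell_0/\!\sim_x$, performs Gram--Schmidt on a countable dense set $\ell_\Q\subset\ell_0$ to produce $g^{(1)},\dots,g^{(p)}\in\cD$ (\Lem{onb}), builds $M^{(k)}$ as an $e$-limit of truncations $\chi\bigl((1_{K_l}g_i^{(k)})_i\bigr)$, and finally verifies the representation $M=\sum_k h_k\bullet M^{(k)}$ by matching Revuz measures on the dense class \Eq{simple}. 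Your approach instead uses \Prop{squareroot} to realize the fibers concretely inside $\R^p$, packages the map $\chi$ of \Lem{isometry} into an explicit isometric isomorphism $\partial\colon(\maruM,\sqrt{2e})\to L^2_\tau$, and then reads off both the generators $M^{(k)}=\partial^{-1}(e_k)$ and the representation directly from linear algebra in $\R^p$.

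What each buys: the paper's argument is more self-contained (it avoids the measurable-selection machinery behind \Prop{squareroot}) and keeps the martingale side in the foreground. Your argument is conceptually cleaner once $\partial$ is in hand---the surjectivity step (your paragraph~3) replaces the paper's Revuz-measure computation, and the generators appear without any limiting procedure. One small remark: the identity $\partial(h\bullet L)=h\,\partial L$ for general $L\in\maruM$ rests on first knowing $d\mu_{\la L\ra}/d\nu=|\partial L|_{\R^p}^2$ (which follows from \Lem{M}(ii) and $L^2$-convergence of $\partial$), then approximating in $L$ with $h\in C_0(K)$ fixed, and finally approximating in $h$; this is indeed routine but deserves the two-step unpacking rather than a single ``by approximation.''
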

This theorem is a natural generalization of  \cite[Theorem~6.12]{Ku93}.
The remainder of this section describes the proof of this theorem. 

Let 
\[
\ell_0=\{a=(a_i)_{i\in\N}\mid a_i\in\R\mbox{ for all }i\in\N \mbox{ and $a_i=0$ except for finitely many $i$}\}.
\]
Fix a minimal energy-dominant measure $\nu$ with $\nu(K)<\infty$ and a sequence $\{f_i\}_{i=1}^\infty$ in $\cF$ such that the linear span of $\{f_i\}_{i=1}^\infty$ is dense in $\cF$.
Denote the Radon--Nikodym derivative $d\nu_{f_i,f_j}/d\nu$ by $Z^{i,j}$ for $i,j\in\N$.
 For each $x\in K$, define pre-inner product $\la\cdot,\cdot\ra_{Z(x)}$ on $\ell_0$ as
 \[
 \la a,b\ra_{Z(x)}:=\sum_{i,j=1}^\infty a_i b_j Z^{i,j}(x),\quad
 a=(a_i)_{i\in\N}\in \ell_0,\ b=(b_j)_{j\in\N}\in \ell_0.
 \]
 For $x\in K$, let $\ell_{Z(x)}$ be the set of all equivalent classes of $\ell_0$ with respect to the equivalent relation~$\sim_x$ derived from pre-inner product $\la\cdot,\cdot\ra_{Z(x)}$.
 That is, $\ell_{Z(x)}=\ell_0/\!\sim_x$,
 where $a\sim_x b$ if and only if $\la a-b,a-b\ra_{Z(x)}=0$.
 \begin{lemma}\label{lem:dimension}
 The dimension of $\ell_{Z(x)}$ is equal to the pointwise index $p(x)$ of $(\cE,\cF)$ for $\nu$-a.e.\,$x$.
\end{lemma}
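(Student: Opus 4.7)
The plan is to prove the equality pointwise (for every $x\in K$) between $\dim \ell_{Z(x)}$ and $\sup_{N\in\N}\rank Z_N(x)$, where $Z_N(x)=(Z^{i,j}(x))_{i,j=1}^N$, and then invoke \Prop{rank} to identify the right-hand side with the pointwise index $p(x)$ for $\nu$-a.e.\,$x$. With this choice of $p$, the lemma reduces to elementary linear algebra connecting a quotient of $\ell_0$ with ranks of its finite principal submatrices.

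First I would introduce, for each $N\in\N$, the linear map $\pi_{N,x}\colon\R^N\to\ell_{Z(x)}$ sending $(a_1,\dots,a_N)$ to the $\sim_x$-equivalence class of the sequence $(a_1,\dots,a_N,0,0,\dots)\in\ell_0$. By \Lem{Z}~(ii) the matrix $Z_N(x)$ is symmetric and nonnegative definite, and for any symmetric nonnegative definite matrix $A$ one has $\{a\mid {}^t\!aAa=0\}=\ker A$ (an immediate consequence of Cauchy--Schwarz for the form $(a,b)\mapsto {}^t\!aAb$). Applied to $Z_N(x)$, this gives $\ker\pi_{N,x}=\ker Z_N(x)$, so $\dim\pi_{N,x}(\R^N)=\rank Z_N(x)$, whence $\dim \ell_{Z(x)}\ge\rank Z_N(x)$. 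Taking the supremum over $N$ yields $\dim \ell_{Z(x)}\ge p(x)$.

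For the reverse inequality I would pick any linearly independent classes $[a^{(1)}],\dots,[a^{(m)}]$ in $\ell_{Z(x)}$. Since each $a^{(k)}$ lies in $\ell_0$ it has finite support, so there exists $N$ so large that all $a^{(1)},\dots,a^{(m)}$ are supported in $\{1,\dots,N\}$; they then lie in the image of $\pi_{N,x}$, and their independence forces $m\le \dim\pi_{N,x}(\R^N)=\rank Z_N(x)\le p(x)$. Hence $\dim \ell_{Z(x)}\le p(x)$, completing the proof. The only point that requires any care is the passage from the kernel of the quadratic form to the kernel of the matrix, which is automatic for symmetric nonnegative definite matrices; there is no substantial obstacle beyond this observation.
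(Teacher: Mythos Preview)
Your proof is correct and follows essentially the same route as the paper: both arguments reduce to the elementary identity $\dim(\R^N/\!\sim_{x,N})=\rank Z_N(x)$ for each $N$ (via the observation that the null set of the quadratic form of a symmetric nonnegative definite matrix is its kernel), then pass to the supremum over $N$ and invoke \Prop{rank}. The paper states the finite-$N$ identity as an ``elementary fact'' and the passage to the limit in one line, whereas you spell out both directions explicitly through the maps $\pi_{N,x}$; the underlying argument is the same.
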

\begin{proof}
Fix $x\in K$ and take $N\in\N$.
Let $Z_N(x)=\left(Z^{i,j}(x)\right)_{i,j=1}^N$.
Then, it is an elementary fact that 
\[
  \dim\left(\R^N/\!\sim_{x,N}\right)=\rank Z_N(x),
\]
where $a\sim_{x,N}b$ is defined as $\sum_{i,j=1}^N a_ib_j Z^{i,j}(x)=0$ for $a=(a_i)_{i=1}^N\in \R^N$ and $b=(b_j)_{j=1}^N\in \R^N$.
By letting $N\to\infty$, we obtain $\dim \ell_{Z(x)}=p(x)$ for $\nu$-a.e.\,$x$ from \Prop{rank}.
\end{proof}
Let
\[
\cC=\left\{g=(g_i)_{i\in\N}\left|\begin{array}{l}
\text{Each $g_i$ is a bounded Borel function on $K$, and there exists}\\
\text{some $n\in\N$ such that $g_i=0$ for all $i\ge n$}
\end{array}\right.\!\!
\right\}.
\]
Note that for $g\in\cC$, $g(x)=(g_i(x))_{i\in\N}$ belongs to $\ell_0$ for each $x\in K$.
We define a pre-inner product $\la\cdot,\cdot\ra_Z$ on $\cC$ by
\[
\la g,g'\ra_{Z}=\frac12\int_K \la g(x),g'(x)\ra_{Z(x)}\,\nu(dx),
\quad g,g'\in\cC.
\]
  For $g=(g_i)_{i\in\N}\in\cC$, we define 
\begin{align}\label{eq:chi}
\chi(g)=\sum_{i=1}^\infty g_i\bullet M^{[f_i]}\in\maruM.
\end{align}
  We note that the sum above is in fact a finite sum.
\begin{lemma}\label{lem:isometry}
The map $\chi\colon \cC\to\maruM$ preserves the \mbox{(pre-)inner} products.
\end{lemma}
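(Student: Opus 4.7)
The plan is to reduce the claim to a direct bilinear computation, using the structural identities that have already been recorded: the bilinearity of $e$, the stochastic-integral rule \Eq{integral}, the Fukushima identification \Eq{MAF}, and the energy--measure identity \Eq{energyidentity}. Since both sums defining $\chi(g)$ and $\chi(g')$ are finite (by the definition of $\cC$), no convergence issue arises and we may expand $e(\chi(g),\chi(g'))$ term by term.

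More precisely, first I would fix $g=(g_i)_{i\in\N}$ and $g'=(g'_j)_{j\in\N}$ in $\cC$ and write
\[
e(\chi(g),\chi(g'))=\sum_{i,j=1}^\infty e\bigl(g_i\bullet M^{[f_i]},\,g'_j\bullet M^{[f_j]}\bigr),
\]
where the sum is finite. Using $e(M,L)=\tfrac12\mu_{\la M,L\ra}(K)$ (which follows from \Eq{energyidentity} applied to $M\pm L$), each summand equals $\tfrac12\mu_{\la g_i\bullet M^{[f_i]},\,g'_j\bullet M^{[f_j]}\ra}(K)$. Applying \Eq{integral} once gives
\[
d\mu_{\la g_i\bullet M^{[f_i]},\,g'_j\bullet M^{[f_j]}\ra}
=g_i\,d\mu_{\la M^{[f_i]},\,g'_j\bullet M^{[f_j]}\ra},
\]
and, using the symmetry of the mutual Revuz measure together with a second application of \Eq{integral}, this becomes $g_ig'_j\,d\mu_{\la M^{[f_i]},M^{[f_j]}\ra}$. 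The identification \Eq{MAF} turns $\mu_{\la M^{[f_i]},M^{[f_j]}\ra}$ into $\nu_{f_i,f_j}$, which has density $Z^{i,j}$ with respect to $\nu$, so
\[
e\bigl(g_i\bullet M^{[f_i]},\,g'_j\bullet M^{[f_j]}\bigr)
=\frac12\int_K g_i(x)g'_j(x)Z^{i,j}(x)\,\nu(dx).
\]
Summing over $i$ and $j$ yields
\[
e(\chi(g),\chi(g'))=\frac12\int_K\sum_{i,j=1}^\infty g_i(x)g'_j(x)Z^{i,j}(x)\,\nu(dx)
=\frac12\int_K\la g(x),g'(x)\ra_{Z(x)}\,\nu(dx)=\la g,g'\ra_Z,
\]
which is the claim.

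There is no genuine obstacle to overcome; the only point that requires care is the second application of \Eq{integral}, where one must invoke the symmetry $\mu_{\la M,L\ra}=\mu_{\la L,M\ra}$ before pulling the second bounded Borel factor out. Everything else is finite-sum bilinearity, and the boundedness and finite support of the $g_i$ guarantee that each stochastic integral $g_i\bullet M^{[f_i]}$ is well-defined in $\maruM$ (since $g_i\in L^2(K,\mu_{\la M^{[f_i]}\ra})=L^2(K,\nu_{f_i})$).
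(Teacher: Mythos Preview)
Your proof is correct and follows essentially the same route as the paper: expand $e(\chi(g),\chi(g'))$ by bilinearity, apply \Eq{integral} (twice, with symmetry) and \Eq{MAF} to reduce each term to $\frac12\int_K g_i g'_j Z^{i,j}\,d\nu$, and sum. The paper's proof is the same computation written more tersely, with the two applications of \Eq{integral} and the symmetry step left implicit.
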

\begin{proof}
 For $g=(g_i)_{i\in\N}\in\cC$ and $g'=(g'_i)_{i\in\N}\in\cC$, from \Eq{energyidentity}, \Eq{integral}, and \Eq{MAF},
  \begin{align*}
  e(\chi(g),\chi(g'))
  &=\frac12\sum_{i,j}\int_K g_i(x)g_j'(x)\,\mu_{\la M^{[f_i]},M^{[f_j]}\ra}(dx)\\
  &=\frac12\sum_{i,j}\int_K g_i(x) g'_j(x) Z^{i,j}(x)\,\nu(dx)\\
  &=\la g,g'\ra_{Z}.
  \end{align*}
This completes the proof.
\end{proof}

By virtue of \Lem{density} and the fact that the linear span of $\{f_i\}_{i=1}^\infty$ is dense in $\cF$, $\chi(\cC)$ is dense in $\maruM$.
Define
\[
 \cD=\{g=(g_i)_{i\in\N}\mid
\text{Each $g_i$ is a Borel function on $K$, and $(g_i(x))_{i\in\N}\in\ell_0$ for $\nu$-a.e.\,$x$}\}.
\]
\begin{lemma}\label{lem:onb}
Suppose that the index $p$ of $(\cE,\cF)$ is finite and nonzero.
Then, there exist $g^{(k)}=(g_i^{(k)})_{i\in\N}\in\cD$, $k=1,\dotsc,p$ such that
\begin{align*}
\la g^{(k)}(x),g^{(l)}(x)\ra_{Z(x)}&=0 \quad \nu\mbox{-a.e.\,$x$ for $k\ne l$}\\
\intertext{and}
\la g^{(k)}(x),g^{(k)}(x)\ra_{Z(x)}&=\left\{\begin{array}{cl}
1& \mbox{when }k\le p(x)\\
0& \mbox{when }k>p(x)
\end{array}\right.
\quad \mbox{for }\nu\mbox{-a.e.\,}x,
\end{align*}
where $p(x)$ is the pointwise index of $(\cE,\cF)$.
\end{lemma}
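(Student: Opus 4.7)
The plan is to reduce the construction to a measurable Gram--Schmidt orthonormalization in $\R^p$. By \Prop{squareroot}, we may fix $\R^p$-valued Borel functions $\zt^i$, $i\in\N$, satisfying $Z^{i,j}(x)=(\zt^i(x),\zt^j(x))_{\R^p}$ for $\nu$-a.e.\ $x$. By \Lem{dimension}, the linear span $V(x):=\mathrm{span}\{\zt^i(x):i\in\N\}\subset\R^p$ has dimension $p(x)$ $\nu$-a.e. If we can produce $\R^p$-valued Borel functions $w^{(1)},\dotsc,w^{(p)}$ on $K$ such that $w^{(1)}(x),\dotsc,w^{(p(x))}(x)$ is an orthonormal basis of $V(x)$, together with expansions $w^{(k)}(x)=\sum_j g^{(k)}_j(x)\zt^j(x)$ with Borel coefficients $g^{(k)}_j(x)$ and only finitely many nonzero terms per $x$, then setting $g^{(k)}(x)=0$ wherever $k>p(x)$ yields
\[
\la g^{(k)}(x),g^{(l)}(x)\ra_{Z(x)}
=\Bigl(\textstyle\sum_i g^{(k)}_i(x)\zt^i(x),\,\sum_j g^{(l)}_j(x)\zt^j(x)\Bigr)_{\R^p}
=(w^{(k)}(x),w^{(l)}(x))_{\R^p},
\]
which is exactly the required orthonormality pattern.

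The construction is Gram--Schmidt driven by a measurable selection of indices. Define $i_k\colon K\to\N\cup\{\infty\}$ inductively, starting with $i_0(x):=0$, by
\[
i_k(x)=\min\{i\in\N\mid \zt^{i_1(x)}(x),\dotsc,\zt^{i_{k-1}(x)}(x),\zt^i(x)\text{ are linearly independent in }\R^p\},
\]
with $i_k(x):=\infty$ when no such $i$ exists. Linear independence is equivalent to nonvanishing of the corresponding $k\times k$ Gram determinant, which is a polynomial in the $Z^{a,b}(x)$ for $a,b\in\{i_1(x),\dotsc,i_{k-1}(x),i\}$; hence each set $\{x:i_k(x)=j\}$ is Borel, and $i_k(x)<\infty$ exactly when $k\le p(x)$ ($\nu$-a.e.). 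On $\{x:k\le p(x)\}$, apply Gram--Schmidt to $\zt^{i_1(x)}(x),\dotsc,\zt^{i_k(x)}(x)$ to obtain an orthonormal tuple $w^{(1)}(x),\dotsc,w^{(k)}(x)$; the resulting coefficients are rational functions of the Gram matrix entries $Z^{i_\ell(x),i_m(x)}(x)$ with positive denominators on this set, hence Borel in $x$. Since each $w^{(k)}(x)$ is then a linear combination of $\zt^{i_1(x)}(x),\dotsc,\zt^{i_k(x)}(x)$, defining $g^{(k)}_j(x)$ to be the coefficient of $\zt^j(x)$ when $j\in\{i_1(x),\dotsc,i_k(x)\}$ and $0$ otherwise gives $(g^{(k)}_j(x))_{j\in\N}\in\ell_0$ and $g^{(k)}\in\cD$; on $\{x:k>p(x)\}$ we put $g^{(k)}(x)\equiv 0$.

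The only nontrivial point is the Borel measurability of this selection scheme. It is handled by the standard fact that linear (in)dependence of finitely many Borel $\R^p$-valued functions is detected by the determinant of their Gram matrix, together with the observation that $i_k(x)$ is the minimum of a countable family of indices selecting among Borel sets. With measurability in place, the desired identities for $\la g^{(k)}(x),g^{(l)}(x)\ra_{Z(x)}$ follow immediately from the displayed formula in the first paragraph and the fact that $\{w^{(k)}(x)\}_{k=1}^{p(x)}$ is an orthonormal basis of $V(x)\subset\R^p$.
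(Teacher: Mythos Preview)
Your proof is correct, but it takes a different route from the paper's. Both arguments are Gram--Schmidt at heart, but they differ in where the orthogonalization is carried out.

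The paper works intrinsically in the quotient space $\ell_{Z(x)}$. It fixes once and for all an enumeration $u^{(1)},u^{(2)},\dotsc$ of the countable set $\ell_\Q\subset\ell_0$ (independent of $x$), and runs Gram--Schmidt on this fixed sequence with respect to $\la\cdot,\cdot\ra_{Z(x)}$. Because the input vectors do not depend on $x$, the coefficients of the resulting $h^{(n)}(x)$ are explicit rational functions of the $Z^{i,j}(x)$, and measurability is automatic; one then picks out the $p(x)$ indices at which the process actually produces a nonzero vector. This avoids any appeal to \Prop{squareroot} or to measurable selection of indices.

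You instead pass through the embedding of \Prop{squareroot} into $\R^p$, apply Gram--Schmidt to the $\zt^i(x)$ there, and pull the coefficients back. This is more geometric and perhaps more intuitive, but it relies on the machinery already invested in \Prop{squareroot} (which itself invoked a measurable selection theorem). One minor point: your appeal to \Lem{dimension} for $\dim V(x)=p(x)$ is slightly indirect, since that lemma concerns $\ell_{Z(x)}$; the missing link is the isometry $\ell_{Z(x)}\to V(x)$, $(a_i)\mapsto\sum_i a_i\zt^i(x)$, induced by $Z^{i,j}(x)=(\zt^i(x),\zt^j(x))_{\R^p}$, or equivalently the fact that $\rank Z_N(x)=\dim\mathrm{span}\{\zt^1(x),\dotsc,\zt^N(x)\}$. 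With that noted, your argument is complete.
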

\begin{proof}
The proof is based on the Gram--Schmidt orthogonalization.
Let 
\[
 \ell_\Q=\{u=(u_i)_{i\in\N}\in\ell_0\mid u_i\in\Q\mbox{ for all }i\in\N \},
\]
which is dense in $\ell_{Z(x)}$ for $\nu$-a.e.\,$x\in K$.
Here, by abuse of notation, an element of $\ell_0$ is also regarded as its equivalent class in $\ell_{Z(x)}$.
We regard each element of $\ell_\Q$ as an $\ell_0$-valued constant function on $K$ and $\ell_\Q$ as a subset of $\cD$.
Since $\ell_\Q$ is a countable set, we can write $\ell_\Q=\{u^{(1)},u^{(2)},u^{(3)},\ldots\}$ and $u^{(n)}=(u_i^{(n)})_{i\in\N}$.
We define a map 
\[
R\colon \cD\ni f=(f_i)_{i\in\N}\mapsto g=(g_i)_{i\in\N}\in\cD
\]
 by
\[
  g_i(x)=\left\{\begin{array}{cl}
  f_i(x)\left/\la (f_j(x))_{j\in\N},(f_j(x))_{j\in\N}\ra_{Z(x)}^{1/2}\right. & \mbox{if }(f_j(x))_{j\in\N}\not\sim_x 0\\
  0 & \mbox{otherwise}
  \end{array}\right.,\quad x\in K,\ i\in\N.
\]
We define $h^{(1)}=R(u^{(1)})$ and 
\[
h^{(n+1)}=R\left(u^{(n+1)}-\sum_{m=1}^n\la u^{(n+1)},h^{(m)}(\cdot)\ra_{Z(\cdot)}h^{(m)}\right),
\quad n\in\N,
\]
 inductively.
By taking account of \Lem{dimension}, for $\nu$-a.e.\,$x$, there exists a unique $n_1<n_2<\dots<n_{p(x)}$, $n_i\in\N$ for each $i$, such that
\[
  \la h^{(n_i)}(x),h^{(n_j)}(x)\ra_{Z(x)}=
  \begin{cases}1&(i=j)\\0&(i\ne j)\end{cases}
  \quad\mbox{for } i,j\in\{1,\dotsc,p(x)\}.
\]
Set 
\[
  g^{(k)}(x)=\left\{\begin{array}{cl}
  h^{(n_k)}(x) & \mbox{if }k\le p(x)\\
  0 & \mbox{if }k> p(x)
  \end{array}\right.
\]
for $k=1,\dotsc,p$. Here, note that $n_k$ depends on $x$.
Then, $g^{(1)},\dotsc,g^{(p)}$ satisfy the desired properties.
\end{proof}
\begin{proof}[of \Thm{index}]
First, we prove that the martingale dimension is less than or equal to the index $p$.
It suffices to assume $1\le p<\infty$.
Take $g^{(1)},\dotsc,g^{(p)}$ in \Lem{onb}, and define
\[
  K_l=\left\{x\in K\left|\begin{array}{l}  \mbox{For all $k=1,\dotsc,p$, $|g_i^{(k)}(x)|\le l$ for $i=1,\dots,l$}\\\mbox{and $g_i^{(k)}(x)=0$ for $i>l$}\end{array}\!\!\right.\right\}, \quad l\in\N.
\]
Then, $\{K_l\}_{l=1}^\infty$ is a nondecreasing sequence and $\nu\left(K\setminus\bigcup_{l=1}^\infty K_l\right)=0$.

Let $k\in\{1,\dotsc,p\}$ and $l\in\N$.
Keeping in mind that $(1_{K_l}\cdot g_i^{(k)})_{i\in\N}\in\cC$, we define
\[
M_l^{(k)}=\chi\left((1_{K_l}\cdot g_i^{(k)})_{i\in\N}\right)=\sum_{i=1}^\infty \left(1_{K_l}\cdot g_i^{(k)}\right)\bullet M^{[f_i]}.
\]
Note that $\sum_{i=1}^\infty$ in the above equation can be replaced by $\sum_{i=1}^l$.
For $l<m$, we have
\begin{align}\label{eq:eM}
e\left(M_m^{(k)}-M_l^{(k)}\right)
&=e\left(\sum_{i=1}^\infty \left(1_{K_m\setminus K_l}\cdot g_i^{(k)}\right)\bullet M^{[f_i]}\right)\nonumber\\
&=\frac12 \sum_{i,j=1}^\infty \int_{K_m\setminus K_l}g_i^{(k)}g_j^{(k)}\,d\mu_{\la M^{[f_i]},M^{[f_j]}\ra}\nonumber\\
&=\frac12\int_{K_m\setminus K_l}\left(\sum_{i,j=1}^\infty g_i^{(k)}g_j^{(k)}Z^{i,j}\right)\,d\nu
\qquad\mbox{(from \Eq{MAF})}
\nonumber\\
&=\frac12\int_{K_m\setminus K_l}1_{\{p(\cdot)\ge k\}}(x)\,\nu(dx) 
\qquad\mbox{(from \Lem{onb})}\nonumber\\
&\to 0 \qquad\mbox{as }m> l\to\infty.
\end{align}
Here, the infinite sums are actually finite sums.
From this, $\{M_l^{(k)}\}_{l=1}^\infty$ is a Cauchy sequence in $(\maruM,e)$. 
We denote the limit by $M^{(k)}$.
From a calculation similar to \Eq{eM}, we have 
$\mu_{\la M_l^{(k)}\ra}(dx)=1_{K_l\cap\{p(\cdot)\ge k\}}(x)\,\nu(dx)$ and $\mu_{\la M_l^{(k)},M_l^{(m)}\ra}=0$ for $k\ne m$, for every $l\in\N$.
By \Lem{M}~(ii), we obtain 
\begin{equation}\label{eq:muMk}
\mu_{\la M^{(k)}\ra}(dx)=1_{\{p(\cdot)\ge k\}}(x)\,\nu(dx)
\end{equation}
and
\begin{equation}\label{eq:muMkl}
\mu_{\la M^{(k)},M^{(m)}\ra}=0, \quad k\ne m
\end{equation}
by letting $l\to\infty$.

Now, we will prove that any $M\in\maruM$ is expressed as
\begin{equation}\label{eq:expression}
M=\sum_{k=1}^p h_k\bullet M^{(k)},\quad h_k=\frac{d\mu_{\la M,M^{(k)}\ra}}{d\nu},\ k=1,\dotsc,p.
\end{equation}
Here, since
\[
  h_k^2
  \le\frac{d\mu_{\la M\ra}}{d\nu}\cdot
  \frac{d\mu_{\la M^{(k)}\ra}}{d\nu}
  \le \frac{d\mu_{\la M\ra}}{d\nu}
  \quad \nu\mbox{-a.e.}
\]
from \Lem{M}~(i) and \Eq{muMk}, $h_k\in L^2(K,\nu)\subset L^2(K,\mu_{\la M^{(k)}\ra})$ and $h_k\bullet M^{(k)}$ is well-defined.

To begin with, suppose that $M$ is described as
\begin{equation}\label{eq:simple}
\begin{array}{l}
M=u\bullet M^{[f_i]},\mbox{ where $i\in\N$ and $u$ is a bounded Borel function on $K$}\\
\mbox{such that $u=0$ on $K\setminus K_l$ for some $l\in\N$}.
\end{array}
\end{equation}
We may assume $l\ge i$.
In order to prove \Eq{expression}, it suffices to prove that the Revuz measures of both sides coincide.
We have
\[
  \mu_{\la M\ra}(dx)=u(x)^2\mu_{\la f_i\ra}(dx)
  =u(x)^2Z^{i,i}(x)\nu(dx)
\]
and
\begin{align*}
  \mu_{\left\la \sum_{k=1}^p h_k\bullet M^{(k)}\right\ra}(dx)
  &=\sum_{k,m=1}^p h_k(x)h_m(x)\mu_{\la M^{(k)},M^{(m)}\ra}(dx)\\
  &=\sum_{k=1}^p h_k(x)^2 1_{\{p(\cdot)\ge k\}}(x)\nu(dx).
  \quad\mbox{(from \Eq{muMk} and \Eq{muMkl})}
\end{align*}
Since $\mu_{\la M^{[f_i]},M_l^{(k)}\ra}(dx)=1_{K_l}(x)\mu_{\la M^{[f_i]},M^{(k)}\ra}(dx)$ and $u(x)1_{K_l}(x)=u(x)$, we also have
\begin{align*}
h_k(x)&= u(x)\frac{d\mu_{\la M^{[f_i]},M^{(k)}\ra}}{d\nu}(x)\\
&= u(x)\frac{d\mu_{\la M^{[f_i]},M_l^{(k)}\ra}}{d\nu}(x)\\
&=\sum_{j=1}^l u(x)g_j^{(k)}(x)Z^{i,j}(x)\\
&= \la v(x),g^{(k)}(x)\ra_{Z(x)},
\end{align*}
where $v(x)=(v_j(x))_{j\in\N}\in\ell_0$ is defined as $v_j(x)=0$ for $j\ne i$ and $v_j(x)=u(x)$ for $j=i$.
For $\nu$-a.e.\,$x\in K$, $\{g^{(k)}(x)\}_{k=1}^{p(x)}$ is an orthonormal basis of $\ell_{Z(x)}$.
Therefore,
\[
  \sum_{k=1}^{p(x)}h_k(x)^2=\la v(x),v(x)\ra_{Z(x)}
  =u(x)^2 Z^{i,i}(x)
  \quad \nu\mbox{-a.e.\,}x.
\]
Combining these equalities, we conclude that $\mu_{\la M\ra}=\mu_{\left\la \sum_{k=1}^p h_k\bullet M^{(k)}\right\ra}$ and \Eq{expression} holds.

The set of all linear combinations of additive functionals $M$ expressed as \Eq{simple} is dense in $\maruM$, which is proved in the same manner as \Lem{density}. 
Thus, \Eq{expression} holds for every $M\in\maruM$ by approximation.
Therefore, the martingale dimension is less than or equal to $p$.

Next, we prove that the index is less than or equal to  the martingale dimension $p$. 
We may assume $1\le p<\infty$.
Then, there exist $M^{(1)},\dotsc,M^{(p)}\in\maruM$ such that every $M\in\maruM$ has a representation $M=\sum_{k=1}^p h_k\bullet M^{(k)}$.
Take arbitrary finitely many functions $g_1,\dotsc,g_N$ from $\cF$.
For each $i=1,\dotsc,N$, we write $M^{[g_i]}=\sum_{k=1}^p h_k^i\bullet M^{(k)}$.
Then,
\[
\nu_{g_i,g_j}
=\mu_{\la M^{[g_i]},M^{[g_j]}\ra}
=\sum_{m,n=1}^p h_m^i h_n^j\cdot\mu_{\la M^{(m)},M^{(n)}\ra}.
\]
Therefore, $Z_N(x)={}^tU(x)R(x)U(x)$, where
$Z_N(x)=\left((d\nu_{g_i,g_j}/{d\nu})(x)\right)_{i,j=1}^N$,
and $U(x)$ is a $p\times N$ matrix and $R(x)$ is a $p\times p$ matrix provided by
\[
U(x)=\left(h_k^i(x)\right)_{\begin{subarray}{l}k=1,\dotsc,p\\i=1,\dotsc,N\end{subarray}},\quad 
R(x)=\left(\frac{d\mu_{\la M^{(m)},M^{(n)}\ra}}{d\nu}(x)\right)_{m,n=1}^p.
\]
This implies that $\rank Z_N(x)\le p$ $\nu$-a.e.\,$x$. Thus, the index is less than or equal to $p$.
\end{proof}
\begin{remark}
The additive functionals $\{M^{(k)}\}_{k=1}^p$ constructed in the first part of the proof satisfy 
\[
  \mu_{\la M^{(k)}\ra}(dx)=1_{\{p(\cdot)\ge k\}}(x)\mu_{\la M^{(m)}\ra}(dx)\quad\mbox{for }k>m
\]
from \Eq{muMk}. Therefore, $\la M^{(k)}\ra_t=\int_0^t 1_{\{p(\cdot)\ge k\}}(X_s)\,d\la M^{(m)}\ra_s$ for $k>m$, where $p(\cdot)$ is taken to be Borel measurable, and in particular,
\[
  d\la M^{(1)}\ra_t \gg d\la M^{(2)}\ra_t \gg\cdots\gg d\la M^{(p)}\ra_t,
  \quad P_x\mbox{-a.e.\ for q.e.\,$x$}.
\]
This is consistent with the definition of the multiplicity of filtration in \cite{DV74}.
\end{remark}
\begin{corollary}\label{cor:one}
Suppose that the index of $(\cE,\cF)$ is one.
For $M'\in\maruM$, the following properties are equivalent.
\begin{enumerate}
\item $\mu_{\la M'\ra}$ is a minimal energy-dominant measure.
\item For any $M\in\maruM$, there exists $h\in L^2(K,\mu_{\la M'\ra})$ such that $M=h\bullet M'$.
\end{enumerate}
In particular, for $f\in\cF$, $M':=M^{[f]}$ satisfies property~(ii) above if and only if $f\in\cFz$.
\end{corollary}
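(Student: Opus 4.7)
The plan is to invoke \Thm{index} in order to convert the index-one hypothesis into a martingale-dimension-one statement, and then use an explicit ``basis'' martingale as an intermediary. Since the index of $(\cE,\cF)$ equals one, the construction in the proof of \Thm{index} (specialized to $p=1$) supplies an $M^{(1)}\in\maruM$ with $\mu_{\la M^{(1)}\ra}=1_{\{p(\cdot)\ge1\}}\cdot\nu=\nu$ (the last equality by \Prop{zero}), such that every $M\in\maruM$ can be written as $M=h\bullet M^{(1)}$ with $h\in L^2(K,\nu)$. In particular, $\mu_{\la M^{(1)}\ra}$ is itself a minimal energy-dominant measure. Throughout, $\nu$ denotes a minimal energy-dominant measure of $(\cE,\cF)$.

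For (ii)~$\Rightarrow$~(i), I would apply~(ii) to $M=M^{(1)}$ to produce $h_0\in L^2(K,\mu_{\la M'\ra})$ with $M^{(1)}=h_0\bullet M'$. Then \Eq{integral} gives $\nu=\mu_{\la M^{(1)}\ra}=h_0^2\cdot\mu_{\la M'\ra}$, so $\nu\ll\mu_{\la M'\ra}$; combined with $\mu_{\la M'\ra}\ll\nu$ from \Lem{M}~(i), the two measures are mutually absolutely continuous, and therefore $\mu_{\la M'\ra}$ is itself a minimal energy-dominant measure.

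For (i)~$\Rightarrow$~(ii), write $M'=h'\bullet M^{(1)}$ with $h'\in L^2(K,\nu)$; then $\mu_{\la M'\ra}=(h')^2\,\nu$. Since~(i) forces $\mu_{\la M'\ra}$ and $\nu$ to be mutually absolutely continuous, we must have $h'\neq 0$ $\nu$-a.e. Given arbitrary $M\in\maruM$, write $M=h\bullet M^{(1)}$ and set $\tilde h:=h/h'$ on $\{h'\neq 0\}$ and $\tilde h:=0$ elsewhere. Then
\[
\int_K \tilde h^2\,d\mu_{\la M'\ra}=\int_K \tilde h^2(h')^2\,d\nu=\int_K h^2\,d\nu<\infty,
\]
so $\tilde h\in L^2(K,\mu_{\la M'\ra})$. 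For every $L\in\maruM$, the defining identity of the stochastic integral together with \Eq{integral} yields
\[
e(\tilde h\bullet M',L)
=\tfrac12\int_K \tilde h\,d\mu_{\la M',L\ra}
=\tfrac12\int_K \tilde h h'\,d\mu_{\la M^{(1)},L\ra}
=e(h\bullet M^{(1)},L)=e(M,L),
\]
using $\tilde h h'=h$ $\nu$-a.e. in the last equality. The Hilbert structure on $\maruM$ then gives $\tilde h\bullet M'=M$.

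The final assertion follows immediately by specializing $M'=M^{[f]}$: the Fukushima decomposition gives $\mu_{\la M^{[f]}\ra}=\nu_f$, so~(i) amounts to the condition that $\nu_f$ is a minimal energy-dominant measure, which is exactly the definition~\Eq{Fzero} of $f\in\cFz$. The main subtlety is the associativity-type identity $\tilde h\bullet(h'\bullet M^{(1)})=(\tilde h h')\bullet M^{(1)}$; as the computation above shows, this follows by testing against arbitrary $L\in\maruM$, and requires nothing beyond \Eq{integral} and the characterization of the stochastic integral already recorded in the excerpt.
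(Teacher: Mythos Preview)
Your proof is correct and follows essentially the same route as the paper's: both invoke the basis martingale $M^{(1)}$ with $\mu_{\la M^{(1)}\ra}=\nu$ from the proof of \Thm{index}, deduce (ii)~$\Rightarrow$~(i) by representing $M^{(1)}$ through $M'$ to get $\nu\ll\mu_{\la M'\ra}$, and deduce (i)~$\Rightarrow$~(ii) by writing $M'=h'\bullet M^{(1)}$, observing $h'\neq 0$ $\nu$-a.e., and dividing. Your version is slightly more explicit about the reverse absolute continuity via \Lem{M}~(i) and about justifying the associativity $\tilde h\bullet(h'\bullet M^{(1)})=(\tilde h h')\bullet M^{(1)}$ by testing against $L\in\maruM$, but these are elaborations rather than differences in approach.
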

\begin{proof}
From \Thm{index}, equation~\Eq{muMk}, and \Prop{zero}, there exists $M^{(1)}\in\maruM$ such that $\mu_{\la M^{(1)}\ra}=\nu$ and for each $M\in\maruM$, there exists $h_1\in L^2(K,\nu)$ satisfying $M=h_1\bullet M^{(1)}$.

Suppose (i) holds. There exists $\ph\in L^2(K,\nu)$ such that $M'=\ph\bullet M^{(1)}$.
Since $d\mu_{\la M'\ra}=\ph^2\,d\mu_{\la M^{(1)}\ra}= \ph^2\,d\nu$, $\ph\ne0$ $\nu$-a.e.
For $M\in\maruM$, take $h_1\in L^2(K,\nu)$ as above and define $h=h_1/\ph$. Then,
$
\int_K h^2\,d\mu_{\la M'\ra}=\int_K h_1^2\,d\nu<\infty$ and 
\[
h\bullet M'=(h_1/\ph) \bullet (\ph\bullet M^{(1)})= h_1 \bullet M^{(1)}=M.
\]
Therefore, (ii) holds.

Next, suppose (ii) holds. 
There exists $h\in L^2(K,\nu)$ such that $M^{(1)}=h\bullet M'$.
Then, $d\nu= d\mu_{\la M^{(1)}\ra}=h^2\,d\mu_{\la M'\ra}$.
This implies that $\nu\ll\mu_{\la M'\ra}$.
Therefore, (i) holds.
\end{proof}

\section{Index of p.c.f.\ fractals}
In this section, we take self-similar fractals as $K$.
We follow \cite{Ki} to provide a framework.
Let $K$ be a compact metrizable topological space, $N$ be an integer greater than one, and set $S=\{1,2,\ldots,N\}$.
Further, let $\psi_i\colon K\to K$ be a continuous injective map for $i\in S$.
Set $\Sg=S^\N$. For $i\in S$, we define a shift operator $\sg_i\colon \Sg\to\Sg$ by $\sg_i(\om_1\om_2\cdots)=i\om_1\om_2\cdots$.
Suppose that there exists a continuous surjective map $\pi\colon \Sg\to K$ such that $\psi_i\circ \pi=\pi\circ\sg_i$ for every $i\in S$.
We term $\cL=(K,S,\{\psi_i\}_{i\in S})$ a self-similar structure.

We also define $W_0=\{\emptyset\}$, $W_m=S^m$ for $m\in \N$, and
denote $\bigcup_{m\ge0}W_m$ by $W_*$.
For $w=w_1w_2\cdots w_m\in W_m$, we define $\psi_w=\psi_{w_1}\circ\psi_{w_2}\circ\cdots\circ\psi_{w_m}$
and $K_w=\psi_w(K)$.
By convention, $\psi_\emptyset$ is the identity map from $K$ to $K$.
For $w=w_1w_2\cdots w_m\in W_m$ and $w'=w'_1w'_2\cdots w'_{m'}\in W_{m'}$, $ww'$ denotes $w_1w_2\cdots w_mw'_1w'_2\cdots w'_{m'}\in W_{m+m'}$.
For $m\ge0$, let $\cB_m$ be a $\sg$-field on $K$ generated by $\{K_w\mid w\in W_m\}$.
Then, $\{\cB_m\}_{m=0}^\infty$ is a filtration on $K$ and the $\sg$-field generated by $\{\cB_m\mid m\ge0\}$ is equal to $\cB(K)$. 

We set 
\[
\cP=\bigcup_{m=1}^\infty \sg^m\left(\pi^{-1}\left(\bigcup_{i,j\in S,\,i\ne j}(K_i\cap K_j)\right)\right)\quad\text{and}\quad V_0=\pi(\cP),
\]
where $\sg^m\colon\Sigma\to\Sigma$ is a shift operator that is defined by $\sg^m(\om_1\om_2\cdots)=\om_{m+1}\om_{m+2}\cdots$.
The set $\cP$ is referred to as the post-critical set.
We assume that $K$ is connected and the self-similar structure $(K,S,\{\psi_i\}_{i\in S})$ is {\em post-critically finite} (p.c.f.), that is, $\cP$ is a finite set.
Figure~\ref{fig:pcf} shows some typical examples of p.c.f.\ self-similar sets $K$, where the set of black points is $V_0$.
The three-dimensional Sierpinski gasket is realized in $\R^3$, and other sets are realized in $\R^2$.
\begin{figure}[h]
\epsfig{file=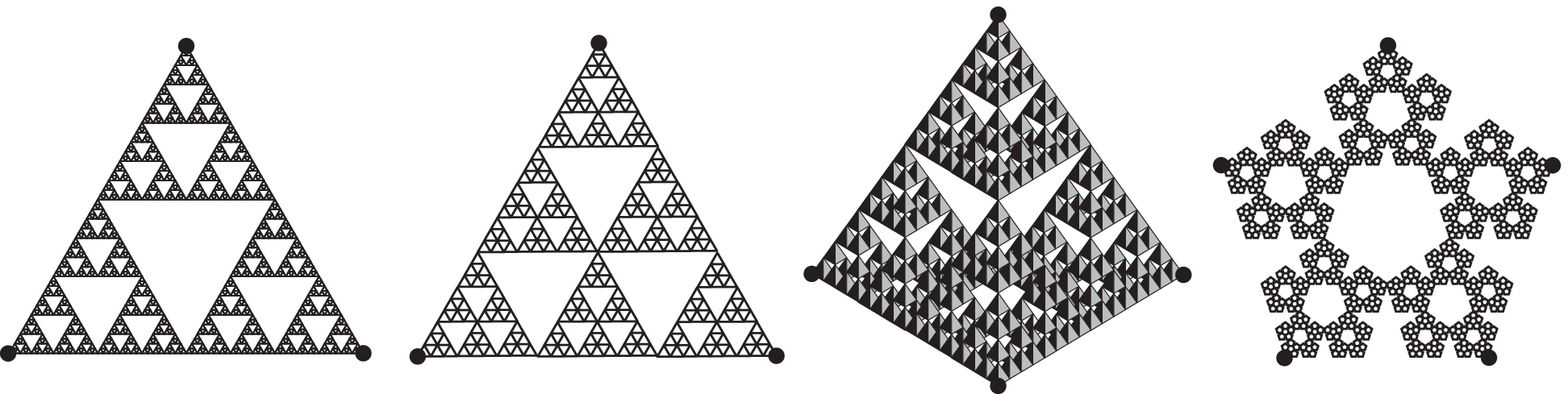, width=\textwidth}\medskip

\epsfig{file=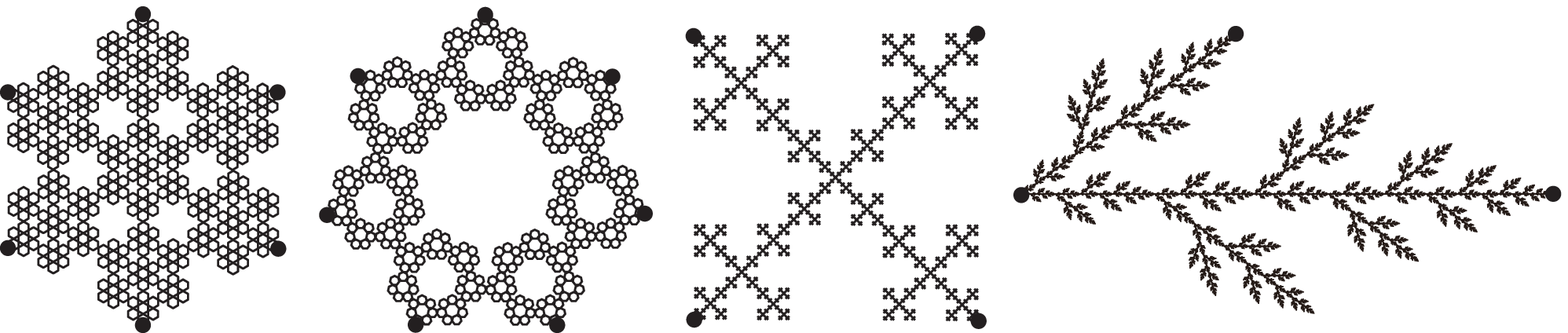, width=\textwidth}
\caption{Examples of p.c.f.\ self-similar sets. From the upper left, two-dimensional standard Sierpinski gasket, two-dimensional level-3 S. G., three-dimensional standard S. G., Pentakun (pentagasket), snowflake, heptagasket, the Vicsek set, and Hata's tree-like set.}
\label{fig:pcf}
\end{figure}

Let $V_m=\bigcup_{w\in W_m}\psi_w(V_0)$ for $m\in\N$ and $V_*=\bigcup_{m=0}^\infty V_m$.
For any $x\in K\setminus V_*$, there exists a unique element $\om=\om_1\om_2\cdots\in \Sg$ such that $\pi(\om)=x$.
We denote $\om_1\om_2\cdots\om_m\in W_m$ by $[x]_m$ for each $m\in\N$, and define $[x]_0=\emptyset$.
The sequence $\{K_{[x]_m}\}_{m=0}^\infty$ is a fundamental system of neighborhoods of $x$ (\cite[Proposition~1.3.6]{Ki}).

For a finite set $V$, let $l(V)$ be the space of all real-valued functions on $V$.
We equip $l(V)$ with an inner product $(\cdot,\cdot)_{l(V)}$   defined by $(u,v)_{l(V)}=\sum_{q\in V}u(q)v(q)$.
Let $D=(D_{qq'})_{q,q'\in V_0}$ be a symmetric linear operator on $l(V_0)$ (also considered to be a square matrix of size $\#V_0$) such that the following conditions hold:
\begin{enumerate}
\item[(D1)] $D$ is nonpositive definite,
\item[(D2)] $Du=0$ if and only if $u$ is constant on $V_0$,
\item[(D3)] $D_{qq'}\ge0$ for all $q\ne q'\in V_0$.
\end{enumerate}
We define $\cE^{(0)}(u,v)=(-Du,v)_{l(V_0)}$ for $u,v\in l(V_0)$.
This is a Dirichlet form on $l(V_0)$, where $l(V_0)$ is identified with the $L^2$ space on $V_0$ with the counting measure (\cite[Proposition~2.1.3]{Ki}).
For $\bfr=\{r_i\}_{i\in S}$ with $r_i>0$, we define a bilinear form $\cE^{(m)}$ on $l(V_m)$ as
\begin{equation}\label{eq:em}
  \cE^{(m)}(u,v)=\sum_{w\in W_m}\frac{1}{r_w}\cE^{(0)}(u\circ\psi_w|_{V_0},v\circ\psi_w|_{V_0}),\quad
  u,v\in l(V_m).
\end{equation}
Here, $r_w=r_{w_1}r_{w_2}\cdots r_{w_m}$ for $w=w_1w_2\cdots w_m$ and $r_\emptyset=1$.
We refer to $(D,\bfr)$ as a {\em harmonic structure} if for every $v\in l(V_0)$,
\[
\cE^{(0)}(v,v)=\inf\{\cE^{(1)}(u,u)\mid u\in l(V_1)\mbox{ and }u|_{V_0}=v\}.
\]
Then, for $m\in\Z_+$ and $u\in l(V_{m+1})$, we obtain $\cE^{(m)}(u|_{V_m},u|_{V_m})\le \cE^{(m+1)}(u,u)$.

We consider only a harmonic structure that is {\em regular}, namely, $0<r_i<1$ for all $i\in S$.
Several studies have been conducted on the existence of regular harmonic structures such as \cite{Li,HMT06,Pe07}. 
We only remark here that all nested fractals have canonical regular harmonic structures.
Nested fractals are self-similar sets that are realized in Euclidean spaces and have good symmetry. For the precise definition, see \cite{Li,Ki}. All the fractals shown in Figure~\ref{fig:pcf} except Hata's tree-like set are nested fractals.
Hata's tree-like set also has regular harmonic structures; see \Prop{Hata} below.

We assume that a regular harmonic structure $(D,\bfr)$ is given.
Let $\mu$ be a Borel probability measure on $K$ with full support.
We can then define a strong local and regular Dirichlet form $(\cE,\cF)$ on $L^2(K,\mu)$ associated with $(D,\bfr)$ by
\begin{align*}
\cF&=\left\{u\in C(K)\subset L^2(K,\mu)\left|\,
\lim_{m\to\infty}\cE^{(m)}(u|_{V_m},u|_{V_m})<\infty\right.\right\},\\
\cE(u,v)&= \lim_{m\to\infty}\cE^{(m)}(u|_{V_m},v|_{V_m}),\quad u,v\in\cF.
\end{align*}
(See the beginning of \cite[section~3.4]{Ki}.)

For a map $\psi\colon K\to K$ and a function $f$ on $K$, $\psi^* f$ denotes the pullback of $f$ by $\psi$, that is, $\psi^* f=f\circ \psi$.
The Dirichlet form $(\cE,\cF)$ constructed above satisfies the self-similarity:
\begin{align}\label{eq:selfsimilarity}
  \cE(f,g)=\sum_{i\in S}\frac1{r_i}\cE(\psi_i^* f,\psi_i^* g),\quad f,g\in\cF.
 \end{align}
From \cite[Theorem~3.3.4]{Ki}, there exists a constant $c_5>0$ such that
\begin{equation}\label{eq:poincare}
  \Osc_{x\in K}f(x)\le c_5\sqrt{\cE(f)},
  \quad f\in\cF\subset C(K),
\end{equation}
where, in general, $\Osc_{x\in B}f(x):=\sup_{x\in B}f(x)-\inf_{x\in B}f(x)$ for $B\subset K$.
By utilizing this inequality, it is easy to prove that the capacity associated with $(\cE,\cF)$ of any nonempty subset of $K$ is uniformly positive (see, e.g., \cite[Proposition~4.2]{Hi08}).

The energy measures have the following properties.
For the proof, see e.g.\ \cite[Lemma~4]{HN06} and its proof.
\begin{lemma}\label{lem:energymeas}
Let $f\in\cF$ and $\nu_f$ be the energy measure of $f$.
\begin{enumerate}
\item For $m\in\Z_+$ and a Borel subset $B$ of $K$,
\[
  \nu_f(B)=\sum_{w\in W_m}\frac1{r_w}\nu_{\psi_w^*f}(\psi_w^{-1}(B)).
\]

\item The measure $\nu_f$ has no atoms. 
In particular, $\nu_f(V_*)=0$.
\item For $w\in W_*$ and a Borel subset $B$ of $K_w$,
\[
  \nu_f(B)=\frac1{r_w}\nu_{\psi_w^*f}(\psi_w^{-1}(B)).
\]
\end{enumerate}
\end{lemma}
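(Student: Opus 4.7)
First, I would prove (i) by iterating the self-similarity identity \Eq{selfsimilarity}, then derive (ii) from (i) by a telescoping argument on cells, and finally obtain (iii) by combining (i) with the atomlessness from (ii). The main obstacle will be the key vanishing limit in (ii).

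For (i), iterate \Eq{selfsimilarity} $m$ times to get
\[
\cE(f,g)=\sum_{w\in W_m}\frac{1}{r_w}\cE(\psi_w^*f,\psi_w^*g),\quad f,g\in\cF.
\]
By \Eq{poincare}, $\cF\subset C(K)$, so every $f\in\cF$ is bounded (as $K$ is compact), and hence $\nu_f$ is characterized by the identity $\int_K\ph\,d\nu_f=2\cE(f\ph,f)-\cE(\ph,f^2)$ for $\ph\in\cF\cap C(K)$. Using $\psi_w^*(f\ph)=\psi_w^*f\cdot\psi_w^*\ph$ and $\psi_w^*(f^2)=(\psi_w^*f)^2$, the iterated self-similarity transforms this into
\[
\int_K\ph\,d\nu_f=\sum_{w\in W_m}\frac{1}{r_w}\int_K(\ph\circ\psi_w)\,d\nu_{\psi_w^*f}.
\]
A density argument using regularity together with the change of variables $y=\psi_w(x)$ then extends this to all Borel $B\subset K$, giving (i).

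For (ii), since $V_*$ is countable, the second assertion reduces to showing that $\nu_f$ has no atoms. Fix $x\in K\setminus V_*$ with unique $\om\in\Sg$ satisfying $\pi(\om)=x$. Applying (i) with $B=\{x\}$ at level $m$, only $w=[x]_m$ contributes, giving
\[
\nu_f(\{x\})\le\frac{\nu_{\psi_{[x]_m}^*f}(K)}{r_{[x]_m}}=\frac{2\cE(\psi_{[x]_m}^*f)}{r_{[x]_m}}=:2a_{[x]_m}.
\]
The consistency $a_w=\sum_{i\in S}a_{wi}$ makes $\{a_{[x]_m}\}_m$ nonincreasing, and $\sum_{w\in W_m}a_w=\cE(f)<\infty$. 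The main obstacle is showing that $\lim_m a_{[x]_m}=0$; for this I would follow the strategy of \cite[Lemma~4]{HN06}, combining continuity of $f$ with an averaging argument across cells to preclude a persistent positive limit. Atoms at points of $V_*$ are ruled out analogously after noting that each such point lies in only finitely many cells at every level. Part (iii) then follows immediately by applying (i) at level $m=|w|$: for Borel $B\subset K_w$ and $w'\in W_m\setminus\{w\}$, we have $\psi_{w'}^{-1}(B)\subset\psi_{w'}^{-1}(K_w\cap K_{w'})\subset V_0$, a finite set, and hence $\nu_{\psi_{w'}^*f}$-null by the atomlessness from (ii), so only the $w'=w$ term survives.
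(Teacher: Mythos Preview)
Your outline is correct and aligns with the paper's treatment: the paper gives no independent proof of this lemma but simply refers to \cite[Lemma~4]{HN06}, and your proposal follows exactly that route---deriving (i) from the self-similarity identity \Eq{selfsimilarity} applied to the defining relation for $\nu_f$, deferring the key atomlessness step in (ii) to the argument in \cite{HN06}, and deducing (iii) from (i) together with (ii). The explicit details you supply for (i) and (iii) are accurate (in particular, for (iii) the inclusion $\psi_{w'}^{-1}(B)\subset V_0$ for $w'\ne w$ is exactly what is needed), so there is nothing to correct.
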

In addition, note that since $1\in\cF$ and $\cE(1,1)=0$, 
\begin{equation}\label{eq:total}
\nu_f(K)=2\cE(f) \quad\mbox{for all $f\in\cF$}.
\end{equation}
The underlying measure $\mu$ does not play an important role with regard to the energy measures since they are independent of the choice of $\mu$.

For each $u\in l(V_0)$, there exists a unique function $h\in\cF$ such that $h|_{V_0}=u$ and $\cE(h)=\inf\{\cE(g)\mid g\in\cF,\ g|_{V_0}=u\}$.
Such a function $h$ is termed a {\em harmonic function}.
The space of all harmonic functions is denoted by $\cH$.
For any $w\in W_*$ and $h\in\cH$, $\psi_w^* h\in\cH$.
By using the linear map $\iota\colon l(V_0)\ni u\mapsto h\in \cH$, we can identify $\cH$ with $l(V_0)$.
In particular, $\cH$ is a finite dimensional subspace of $\cF$, and the norm $\|\cdot\|_\cF$ on $\cH$ is equivalent to $(\iota^{-1}(\cdot), \iota^{-1}(\cdot))_{l(V_0)}^{1/2}$.
For each $i\in S$, we define a linear operator $A_i\colon l(V_0)\to l(V_0)$ as 
\begin{equation}\label{eq:A}
A_i=\iota^{-1}\circ \psi_i^* \circ \iota,
\end{equation}
which is also considered as a square matrix of size $\#V_0$.

Let $\cI$ be the set of all constant functions on $K$, which is a one-dimensional subspace of $\cH$.
Let $Q$ be an arbitrary projection operator from $\cH$ to $\cI$.
Define $P=\Id-Q$, where $\Id$ is an identity operator on $\cH$.
For each $i\in S$, set $A'_i=P A_i$.
For $w=w_1w_2\dotsm w_m\in W_m$, define $A_w=A_{w_m}\dotsm A_{w_2}A_{w_1}$ and $A'_w=A'_{w_m}\dotsm A'_{w_2}A'_{w_1}$.
By convention, $A_\emptyset=\Id$ and $A'_\emptyset=P$.
Let $\bfone\in l(V_0)$ be a constant function on $V_0$ with value $1$.
Since each $A_i$ has an eigenvalue $1$ with $\bfone$ as eigenfunctions, $A_iQ=Q$.
Then, $PA_iP=(\Id-Q)A_i(\Id-Q)=A_i-QA_i=PA_i$.
Therefore, for any $w\in W_*$,
\begin{equation}\label{eq:P}
A'_w=PA_w.
\end{equation}
In addition, since $DQ=0$ and $D$ is symmetric,
\begin{equation}\label{eq:D}
{}^tPDP=(\Id-{}^tQ)D(\Id-Q)=D. 
\end{equation}
For $m\ge0$, let $\cH_m$ denote the set of all functions $f$ in $\cF$ such that $\psi_w^*f\in\cH$ for all $w\in W_m$.
Let $\cH_*=\bigcup_{m\ge0}\cH_m$.
Functions in $\cH_*$ are referred to as {\em piecewise harmonic functions}.
From \cite[Lemma~2.1]{Hi08}, $\cH_*$ is dense in $\cF$. 
\begin{lemma}\label{lem:matrix}
Let $w\in W_*$ and $f,h\in\cH$.
Then, 
\begin{equation}\label{eq:nuh}
\nu_{f,h}(K_w)=-\frac2{r_w}{}^t(A'_w\iota^{-1}(f))D(A'_w\iota^{-1}(h)).
\end{equation}
\end{lemma}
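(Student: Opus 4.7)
The plan is to unfold the right-hand side by following the scaling and harmonicity properties the paper has already set up, keeping the identification $\iota\colon l(V_0)\to\cH$ in view. By polarization of the definition of the mutual energy measure ($\nu_{f,h}=(\nu_{f+h}-\nu_f-\nu_h)/2$), \Lem{energymeas}~(iii) extends from $\nu_f$ to $\nu_{f,h}$: for any Borel $B\subset K_w$,
\[
  \nu_{f,h}(B)=\frac1{r_w}\nu_{\psi_w^*f,\psi_w^*h}\bigl(\psi_w^{-1}(B)\bigr).
\]
Setting $B=K_w$ and using that $\psi_w\colon K\to K$ is injective with $\psi_w(K)=K_w$, so $\psi_w^{-1}(K_w)=K$, this becomes $\nu_{f,h}(K_w)=r_w^{-1}\nu_{\psi_w^*f,\psi_w^*h}(K)$.

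Next I would apply \Eq{total}, polarized in the same way, to obtain $\nu_{g_1,g_2}(K)=2\cE(g_1,g_2)$ for any $g_1,g_2\in\cF$; hence
\[
  \nu_{f,h}(K_w)=\frac{2}{r_w}\cE(\psi_w^*f,\psi_w^*h).
\]
Because $f,h\in\cH$ and $\psi_w^*\cH\subset\cH$, both pull-backs are harmonic. For harmonic functions the definition of a harmonic structure gives $\cE^{(m)}(g|_{V_m},g|_{V_m})=\cE^{(0)}(g|_{V_0},g|_{V_0})$ at every level $m$, so letting $m\to\infty$ yields $\cE(g_1,g_2)=\cE^{(0)}(g_1|_{V_0},g_2|_{V_0})=-{}^t(g_1|_{V_0})D(g_2|_{V_0})$ for $g_1,g_2\in\cH$. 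Combining this with the iteration rule $\iota^{-1}(\psi_w^*g)=A_w\iota^{-1}(g)$ (immediate from \Eq{A} and $\psi_w^*=\psi_{w_m}^*\cdots\psi_{w_1}^*$), I arrive at
\[
  \nu_{f,h}(K_w)=-\frac{2}{r_w}{}^t\!\bigl(A_w\iota^{-1}(f)\bigr)D\bigl(A_w\iota^{-1}(h)\bigr).
\]

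The only remaining point is to replace $A_w$ by $A'_w=PA_w$ (see \Eq{P}). This is where \Eq{D} does the work: since ${}^tPDP=D$,
\[
  {}^t\!(A'_wu)\,D\,(A'_wv)={}^tu\,{}^tA_w\,{}^tP\,D\,P\,A_w\,v={}^tu\,{}^tA_w\,D\,A_w\,v={}^t\!(A_wu)\,D\,(A_wv)
\]
for all $u,v\in l(V_0)$, so the $P$-insertion is harmless and \Eq{nuh} follows. I do not expect a genuine obstacle here; the only mild bookkeeping is making sure the polarization arguments in the first two steps are invoked cleanly, and that the order of composition in the definition of $A_w$ is consistent with $\psi_w=\psi_{w_1}\circ\cdots\circ\psi_{w_m}$.
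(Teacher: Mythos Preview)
Your proof is correct and follows essentially the same route as the paper's: obtain the identity with $A_w$ in place of $A'_w$, then invoke \Eq{P} and \Eq{D} to insert $P$. The only difference is that the paper outsources the $A_w$-version to \cite[Lemma~4~(ii)]{HN06} (after reducing to $f=h$), whereas you derive it directly from \Lem{energymeas}~(iii), \Eq{total}, and the identity $\cE(g_1,g_2)=\cE^{(0)}(g_1|_{V_0},g_2|_{V_0})$ for harmonic $g_1,g_2$; your version is thus more self-contained but not materially different.
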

\begin{proof}
It is sufficient to prove \Eq{nuh} when $f=h$.
From \cite[Lemma~4~(ii)]{HN06}, equation~\Eq{nuh} with  $A'_w$ replaced by $A_w$ holds.
By combining \Eq{P} and \Eq{D}, we obtain \Eq{nuh}.
\end{proof}

In what follows, we fix a minimal energy-dominant measure $\nu$ with $\nu(K)<\infty$.
\begin{proposition}\label{prop:equivalence}
Let $m\in\Z_+$.
Define $\nu'_m=\sum_{w\in W_m}r_w^{-1}(\psi_w)_*\nu$.
That is, 
\[
\nu'_m(B)=\sum_{w\in W_m}\frac{1}{r_w}\nu(\psi_w^{-1}(B)),
\quad B\in\cB(K).
\]
Then, $\nu$ and $\nu'_m$ are mutually absolutely continuous.
In particular, $\nu(B)=0$ implies $\nu(\psi_w^{-1}(B))=0$.
Moreover, for any $f,g\in \cF$ and $w\in W_m$,
\begin{equation}\label{eq:scale}
  \frac{d\nu_{f,g}}{d\nu'_m}(x)=\frac{d\nu_{\psi_w^* f,\psi_w^* g}}{d\nu}(\psi_w^{-1}(x))
  \quad \nu\mbox{-a.e.\,$x$ on }K_w.
\end{equation}
\end{proposition}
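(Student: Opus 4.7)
The plan is to establish the two absolute-continuity statements (from which the ``in particular'' assertion is immediate) and then verify the Radon--Nikodym identity. The principal tools are \Lem{energymeas}~(i) (the cellwise decomposition of $\nu_f$), \Lem{energymeas}~(iii) (the single-cell scaling), \Lem{null} (characterising $\nu$-null sets as those that are simultaneously null for every energy measure), and the fact $\nu(V_*) = 0$, which follows from \Lem{energymeas}~(ii) and \Lem{null}.

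The direction $\nu \ll \nu'_m$ is the easy one. If $\nu'_m(B) = 0$, then $\nu(\psi_w^{-1}(B)) = 0$ for every $w \in W_m$, so by \Lem{null} $\nu_h(\psi_w^{-1}(B)) = 0$ for every $h \in \cF$. Specialising to $h = \psi_w^* f$ (which lies in $\cF$ by the self-similarity \Eq{selfsimilarity}) and summing over $w$ via \Lem{energymeas}~(i) yields $\nu_f(B) = 0$ for every $f \in \cF$, so \Lem{null} gives $\nu(B) = 0$.

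The reverse $\nu'_m \ll \nu$ is the main obstacle, since reversing the argument above requires that \emph{every} $g \in \cF$ arise as $\psi_w^* f$ for some $f \in \cF$, not merely those already in the image of $\psi_w^*$. This extension claim is where the p.c.f.\ structure enters: given $g \in \cF$, I would define $f := g \circ \psi_w^{-1}$ on $K_w$ (well-defined by injectivity of $\psi_w$), assign arbitrary (say $0$) values at the vertices in $V_m \setminus \psi_w(V_0)$, and on each cell $K_{w'}$ with $w' \in W_m$, $w' \ne w$, take $f|_{K_{w'}}$ to be the harmonic function (i.e.\ $\psi_{w'}^* f \in \cH$) with those prescribed boundary values on $\psi_{w'}(V_0)$. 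Continuity at shared vertices of $V_m$ is automatic from consistency, and iterating \Eq{selfsimilarity} gives $\cE(f) = r_w^{-1}\cE(g) + \sum_{w' \ne w} r_{w'}^{-1}\cE(\psi_{w'}^* f) < \infty$ (harmonic pieces have finite energy), so $f \in \cF$ with $\psi_w^* f = g$. Granting this, if $\nu(B) = 0$ then by minimality $\nu_f(B) = 0$ for every $f \in \cF$, and nonnegativity of each summand in \Lem{energymeas}~(i) forces $\nu_{\psi_w^* f}(\psi_w^{-1}(B)) = 0$ for every $f \in \cF$ and every $w \in W_m$; the extension promotes this to $\nu_g(\psi_w^{-1}(B)) = 0$ for every $g \in \cF$, and a final application of \Lem{null} yields $\nu(\psi_w^{-1}(B)) = 0$, hence $\nu'_m(B) = 0$.

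For the scaling identity, fix $w \in W_m$. Since $V_m \subset V_*$ we have $\nu(V_m) = 0$, and for $w' \ne w$ the set $\psi_{w'}^{-1}(K_w \cap K_{w'}) \subset \psi_{w'}^{-1}(V_m) \subset V_0$ is $\nu$-null. Consequently, for Borel $A \subset K_w$ only the $w' = w$ term contributes to the defining sum of $\nu'_m(A)$, giving $\nu'_m(A) = r_w^{-1}(\psi_w)_*\nu(A)$. Combining this with \Lem{energymeas}~(iii)---which extends bilinearly from $\nu_f$ to $\nu_{f,g}$, so that $\nu_{f,g}(A) = r_w^{-1}\nu_{\psi_w^* f,\psi_w^* g}(\psi_w^{-1}(A))$ for $A \subset K_w$---a standard change of variables along $\psi_w$ identifies $d\nu_{f,g}/d\nu'_m$ on $K_w$ with $(d\nu_{\psi_w^* f,\psi_w^* g}/d\nu)\circ\psi_w^{-1}$, as claimed.
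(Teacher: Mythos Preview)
Your proof is correct. The absolute-continuity arguments match the paper's closely: the paper likewise invokes \Lem{null} and \Lem{energymeas} for both directions, and for $\nu'_m\ll\nu$ also appeals to the existence of an $\hat f\in\cF$ with $\psi_w^*\hat f=f$, which it simply asserts without the explicit harmonic-extension construction you supply.

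The one genuine difference is in the derivation of the scaling identity~\Eq{scale}. You restrict to $K_w$, observe that there both $\nu'_m$ and $\nu_{f,g}$ are $r_w^{-1}$-scaled pushforwards by $\psi_w$ of $\nu$ and $\nu_{\psi_w^*f,\psi_w^*g}$ respectively, and conclude by the change-of-variables rule for Radon--Nikodym derivatives of pushforward measures. The paper instead computes the derivative concretely via the filtration $\{\cB_n\}$: for $x\in K_w\setminus V_*$ and $n\ge m$ it writes the ratio $\nu_{f,g}(K_{[x]_n})/\nu'_m(K_{[x]_n})$, uses \Lem{energymeas} to rewrite it as $\nu_{\psi_w^*f,\psi_w^*g}(K_{[\psi_w^{-1}(x)]_{n-m}})/\nu(K_{[\psi_w^{-1}(x)]_{n-m}})$, and lets $n\to\infty$ invoking the martingale convergence theorem. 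Your abstract pushforward argument is cleaner and avoids the martingale machinery; the paper's approach is more hands-on and ties the identity directly to the cell structure already in use elsewhere in the section.
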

\begin{proof}
Let $B$ be a Borel set of $K$.
Suppose $\nu'_m(B)=0$.
Let $f\in\cF$ and $w\in W_m$. Then,
$0=((\psi_w)_*\nu)(B)=\nu(\psi_w^{-1}(B\cap K_w))$.
Since $\nu_{\psi_w^* f}\ll\nu$, we have
$0=\nu_{\psi_w^* f}(\psi_w^{-1}(B\cap K_w))=r_w\nu_f(B\cap K_w)$ by \Lem{energymeas}~(iii).
Since $w\in W_m$ is arbitrary, $\nu_f(B)=0$.
From \Lem{null}, $\nu(B)=0$.
Therefore, $\nu\ll\nu'_m$.

Next, suppose $\nu(B)=0$.
Let $f\in\cF$ and $w\in W_m$.
Then, there exists $\hat f\in\cF$ such that $\psi_w^* \hat f=f$.
From \Lem{energymeas}~(i), $0=\nu_{\hat f}(B)\ge r_w^{-1}\nu_{f}(\psi_w^{-1}(B))$.
From \Lem{null}, $\nu(\psi_w^{-1}(B))=0$, that is, $((\psi_w)_*\nu)(B)=0$.
Therefore, $\nu'_m(B)=0$.
This implies $\nu'_m\ll\nu$.

For the proof of \Eq{scale}, take $x\in K_w\setminus V_*$ and $n\ge m$.
Then, from \Lem{energymeas},
\[
  \frac{\nu_{f,g}(K_{[x]_n})}{\nu'_m(K_{[x]_n})}
  = \frac{r_w^{-1}\nu_{\psi_w^* f, \psi_w^* g}(\psi_w^{-1}(K_{[x]_n}))}
  {r_w^{-1}\nu(\psi_w^{-1}(K_{[x]_n}))}
  = \frac{\nu_{\psi_w^* f, \psi_w^* g}(K_{[\psi_w^{-1}(x)]_{n-m}})}
  {\nu(K_{[\psi_w^{-1}(x)]_{n-m}})}.
\]
Letting $n\to\infty$, we obtain \Eq{scale}.
\end{proof}
For each $q\in V_0$, define $h_q\in\cH$ so that $h_q(x)=\begin{cases}1&(x=q)\\0&(x\ne q)\end{cases}$, $x\in V_0$. 
Let $Z^{q,q'}=\left.d\nu_{h_q,h_{q'}}\right/d\nu$ for $q,q'\in V_0$.
\begin{proposition}\label{prop:indexK}
Define 
$
p=\nu\mbox{-}\!\esssup_{x\in K} \rank\left(Z^{q,q'}(x)\right)_{q,q'\in V_0}
$.
Then, the index of $(\cE,\cF)$ is $p$.
\end{proposition}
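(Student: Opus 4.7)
The plan is to establish both inequalities: $p \le \text{index}(\cE,\cF)$ and $\text{index}(\cE,\cF) \le p$. The first is immediate from the definition of index applied to the finite family $\{h_q\}_{q \in V_0} \subset \cF$. The work lies in the reverse inequality.

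For the hard direction, I would exploit the fact that the piecewise harmonic functions $\cH_* = \bigcup_m \cH_m$ are dense in $\cF$. Fix $f_1,\dots,f_N \in \cF$ and approximate each $f_i$ by a sequence in $\cH_m$ (with a common $m$ by taking max). By \Lem{energyineq}~(ii) and the lower semicontinuity argument used in \Prop{rank}, it suffices to bound $\rank(d\nu_{f_i,f_j}/d\nu)(x)$ by $p$ for $\nu$-a.e.\,$x$ when all $f_i \in \cH_m$. For such $f_i$, each pullback $\psi_w^* f_i$ (with $w \in W_m$) lies in $\cH$ and therefore expands as $\sum_{q \in V_0} c_q^{i,w} h_q$ with $c_q^{i,w} = f_i(\psi_w(q))$. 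Applying \Prop{equivalence}, for $\nu$-a.e.\ $x \in K_w$,
\[
\frac{d\nu_{f_i,f_j}}{d\nu'_m}(x) = \frac{d\nu_{\psi_w^*f_i,\psi_w^*f_j}}{d\nu}(\psi_w^{-1}(x)) = \sum_{q,q'} c_q^{i,w} c_{q'}^{j,w} Z^{q,q'}(\psi_w^{-1}(x)),
\]
which equals the $(i,j)$-entry of $\,{}^t C_w Z(\psi_w^{-1}(x)) C_w$ for $C_w = (c_q^{i,w})_{q,i}$. Since $\nu \ll \nu'_m \ll \nu$ with positive scalar Radon--Nikodym derivatives $\nu$-a.e., multiplying the matrix by this scalar preserves rank, so $\rank(d\nu_{f_i,f_j}/d\nu)(x) \le \rank Z(\psi_w^{-1}(x))$.

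The key technical step, and the part I expect to be the main obstacle, is transferring the null set $B := \{y \in K \mid \rank(Z^{q,q'}(y))_{q,q'\in V_0} > p\}$ through $\psi_w$: we need $\nu(\psi_w(B)) = 0$ so that the rank bound $\le p$ holds $\nu$-a.e.\,on $K_w$. Using the representation $\nu'_m = \sum_{w' \in W_m} r_{w'}^{-1}(\psi_{w'})_* \nu$, one has
\[
\nu'_m(\psi_w(B)) = \sum_{w' \in W_m} r_{w'}^{-1}\nu(\psi_{w'}^{-1}(\psi_w(B))).
\]
For $w' = w$, injectivity gives $\psi_w^{-1}(\psi_w(B)) = B$ with $\nu(B) = 0$. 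For $w' \neq w$ in $W_m$, one has $\psi_{w'}^{-1}(\psi_w(B)) \subset \psi_{w'}^{-1}(K_w \cap K_{w'}) \subset V_*$; by \Lem{energymeas}~(ii) every $\nu_f$ charges no atoms and $\nu_f(V_*) = 0$, so \Lem{null} yields $\nu(V_*) = 0$. Hence $\nu'_m(\psi_w(B)) = 0$, and mutual absolute continuity gives $\nu(\psi_w(B)) = 0$.

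Finally, since $K = \bigcup_{w \in W_m} K_w$ (a finite union), combining the bounds over all $w \in W_m$ yields $\rank(d\nu_{f_i,f_j}/d\nu)(x) \le p$ for $\nu$-a.e.\,$x \in K$. This holds for arbitrary $f_1,\dots,f_N \in \cF$, proving $\text{index}(\cE,\cF) \le p$ and completing the proof.
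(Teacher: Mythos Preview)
Your argument is correct and follows the same route as the paper's proof: reduce to $f_i\in\cH_m$ via the density of $\cH_*$, expand each $\psi_w^*f_i$ in the basis $\{h_q\}_{q\in V_0}$, and invoke \Prop{equivalence} to relate the matrix on $K_w$ to ${}^tC_w\, Z(\psi_w^{-1}(x))\,C_w$. The paper streamlines the reduction step slightly by fixing the dense sequence $\{f_i\}$ inside $\cH_*$ at the outset and citing \Prop{rank} directly, rather than approximating arbitrary $f_i\in\cF$, but this difference is cosmetic.

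Your explicit treatment of the null-set transfer $\nu(B)=0\Rightarrow\nu(\psi_w(B))=0$ (using the decomposition of $\nu'_m$, the p.c.f.\ property that distinct $m$-cells meet only inside $V_*$, and $\nu(V_*)=0$ from \Lem{energymeas}~(ii) and \Lem{null}) is a detail the paper's proof passes over: after writing
\[
\frac{d\nu_{f_i,f_j}}{d\nu}(x)=\frac{d\nu'_m}{d\nu}(x)\cdot\frac{d\nu_{\psi_w^*f_i,\psi_w^*f_j}}{d\nu}(\psi_w^{-1}(x))
\quad\nu\text{-a.e.\ on }K_w,
\]
the paper concludes the rank bound without checking that a $\nu$-null exceptional set in the variable $y=\psi_w^{-1}(x)$ pulls back to a $\nu$-null set in $x\in K_w$. \Prop{equivalence} provides only the direction $\nu(B)=0\Rightarrow\nu(\psi_w^{-1}(B))=0$, which is the wrong one here. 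Your argument correctly supplies the missing direction.
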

\begin{proof}
It suffices to prove that the index is less than or equal to $p$.
Take a sequence $\{f_i\}_{i=1}^\infty$ from $\cH_*$ such that the linear span of $\{f_i\}_{i=1}^\infty$ is dense in $\cF$.
For $N\in\N$, define an $N\times N$ matrix $Z_N(x)$, $x\in K$, by
$
  Z_N(x)=\left(({d\nu_{f_i,f_j}}/{d\nu})(x)\right)_{i,j=1}^N.
$
From \Prop{rank}, it is sufficient to show that 
\begin{equation}\label{eq:esssup}
\nu\mbox{-}\!\esssup_{x\in K} \rank Z_N(x)\le p.
\end{equation}
Take a sufficiently large $m\in\N$ such that $f_i\in \cH_m$ for all $i=1,\dotsc, N$.
Let $w\in W_m$.
Since $\psi_w^* f_i$ is expressed as a linear combination of $\{h_q\}_{q\in V_0}$ for every $i=1,\dotsc, N$, it is easy to prove 
\[
\nu\mbox{-}\!\esssup_{x\in K} \rank \left(\frac{d\nu_{\psi_m^*f_i, \psi_m^*f_j}}{d\nu}(x)\right)_{i,j=1}^N\le p,
\]
as in the first part of the proof of \Prop{rank}.
From \Prop{equivalence},
\[
  \frac{d\nu_{f_i,f_j}}{d\nu}(x)=\frac{d\nu'_m}{d\nu}(x) \frac{d\nu_{\psi_w^* f_i,\psi_w^* f_j}}{d\nu}(\psi_w^{-1}(x))
  \quad \nu\mbox{-a.e.\ on }K_w,
\]
where $\nu'_m$ is provided in \Prop{equivalence}.
Therefore, $\nu\mbox{-}\!\esssup_{x\in K_w} \rank Z_N(x)\le p$.
Since $w\in W_m$ is arbitrary, we obtain \Eq{esssup}. 
\end{proof}
For determining the index of $(\cE,\cF)$, the following result is the most general one available at present, which is an extension of the result described in \cite{Ku89}.
\begin{theorem}[({\cite[Theorem~4.4]{Hi08}})]\label{th:main0}
Suppose that for each $q\in V_0$, $q$ is a fixed point of $\psi_i$ for some $i\in S$ and $K\setminus \{q\}$ is connected.
Then, the index (or equivalently, the martingale dimension) of $(\cE,\cF)$ is one.
\end{theorem}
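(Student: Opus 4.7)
By \Prop{indexK}, it suffices to show that the matrix $\bigl(Z^{q,q'}(x)\bigr)_{q,q' \in V_0}$ has rank at most one for $\nu$-a.e.\ $x \in K$. Setting $e_q:=\iota^{-1}(h_q)$, \Lem{matrix} gives
\[
\bigl(\nu_{h_q,h_{q'}}(K_w)\bigr)_{q,q' \in V_0}\;=\;-\frac{2}{r_w}\,{}^tA'_w\,D\,A'_w
\qquad\text{for every }w\in W_*,
\]
and the rank of this matrix equals $\rank A'_w$: by (D2), $\ker D=\cI$, whereas by \Eq{P} the map $A'_w=PA_w$ takes values in the $P$-image, a complement of $\cI$, on which $-D$ is strictly positive definite. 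Using \Prop{equivalence} to control the Radon--Nikodym factor $d\nu'_m/d\nu$ and applying the martingale differentiation theorem to each finite signed measure $\nu_{h_q,h_{q'}}$ along the refining filtration $\{\cB_m\}$, for $\nu$-a.e.\ $x\in K\setminus V_*$ the matrix $Z(x)$ arises as the limit of the normalized matrices $\nu(K_{[x]_n})^{-1}\bigl(\nu_{h_q,h_{q'}}(K_{[x]_n})\bigr)_{q,q'}$ as $n\to\infty$. Consequently the problem reduces to showing that, after normalization, the matrix products $A'_{[x]_n}=PA_{\om_n}\cdots A_{\om_1}$ collapse to a rank-one matrix for $\nu$-typical $x=\pi(\om)$.

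The natural strategy is multiplicative ergodic theory applied to the random product $(A'_{\om_n}\cdots A'_{\om_1})_{n\ge1}$. I would equip $\Sg$ with a shift-invariant Bernoulli (or Markov) measure comparable to the pushforward of $\nu$ under the symbolic coding, and consider the associated Lyapunov exponents $\lm_1\ge\lm_2\ge\cdots$. If the top exponent is simple, that is $\lm_1>\lm_2$, then by the Oseledets theorem the normalized products $A'_{[x]_n}/\|A'_{[x]_n}\|$ converge $\om$-almost everywhere to a matrix of rank one. Dividing the identity from the previous paragraph by $\nu(K_{[x]_n})$ and passing to the limit, one concludes that $Z(x)$ itself has rank one, and \Prop{zero} then rules out rank zero, yielding index exactly one.

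The principal obstacle is to establish strict simplicity of the top Lyapunov exponent $\lm_1>\lm_2$, and it is here that both geometric hypotheses are essential. The fixed-point assumption--that each $q\in V_0$ is fixed by some $\psi_i$--forces $A_i$ to preserve the distinguished direction $e_q$ as a maximal eigenvector, so that after projection by $P$ the restriction $A'_i$ strictly contracts a cone of harmonic functions vanishing at $q$; this provides enough ``building blocks'' on which the product has controlled dominant behaviour. The connectedness of $K\setminus\{q\}$ supplies the irreducibility input: no proper subspace of the range of $P$ in $\cH$ can be invariant under every $A'_i$ simultaneously, for such an invariant subspace would force the existence of a nonconstant piecewise harmonic function whose energy is confined to a disconnected piece of $K$, contradicting the hypothesis. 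The combination of these two ingredients--strict cone contraction plus irreducibility--is precisely the input required by Furstenberg's positivity criterion for products of matrices, and it yields $\lm_1>\lm_2$. Verifying this spectral gap carefully, under only the stated hypotheses rather than additional smoothness or symmetry, is the delicate technical heart of the argument.
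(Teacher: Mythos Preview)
The paper does not prove this theorem. It is imported verbatim from \cite[Theorem~4.4]{Hi08}, as the bracketed citation in the theorem heading indicates, and no argument is supplied here. There is therefore no proof in the present paper to compare your attempt against.

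On the merits of your sketch: the reduction via \Prop{indexK} and \Lem{matrix} to the rank of $A'_{[x]_n}$ is correct and is exactly how the paper handles the special case of Hata's tree in \Prop{Hata}. The genuine gap lies in the step where you ``equip $\Sg$ with a shift-invariant Bernoulli (or Markov) measure comparable to the pushforward of $\nu$.'' Such comparability is not available in general: energy measures on p.c.f.\ fractals are typically mutually singular with self-similar measures (this is the content of \cite{Hi05,HN06}, cited in the introduction), so an Oseledets-type statement proved for a Bernoulli reference measure gives no information about $\nu$-a.e.\ behaviour. Whatever argument one uses must work directly with the energy measure itself, and the two hypotheses---that each $q\in V_0$ is a fixed point of some $\psi_i$, and that $K\setminus\{q\}$ is connected---must enter the analysis of the products $A'_w$ without appealing to an auxiliary ergodic reference measure. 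Your Furstenberg-style heuristic for the role of those hypotheses is a reasonable first guess, but as written it is only a heuristic, and the irreducibility claim (that connectedness of $K\setminus\{q\}$ rules out proper invariant subspaces for the $A'_i$) is neither obviously true nor, by itself, sufficient for a spectral gap.
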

The author expects that the index is always one without the assumption of \Thm{main0}, but does not know the proof nor the counterexamples at the moment.
All nested fractals satisfy the assumption of \Thm{main0} (see e.g.\ \cite[Theorem~1.6.2 and Proposition~1.6.9]{Ki} for the proof).
A typical example that does not satisfy the assumption is Hata's tree-like set (shown in the lower right-hand side of Figure~\ref{fig:pcf}).
More precisely, let $c\in\C\setminus\R$ such that $0<|c|<1$ and $0<|1-c|<1$.
Hata's tree-like set is a unique nonempty compact subset $K$ of $\C$ such that $K=\psi_1(K)\cup \psi_2(K)$,
where $\psi_1(z)=c\bar z$ and $\psi_2(z)=(1-|c|^2)\bar z+|c|^2$ for $z\in\C$.
Then, $V_0=\{c,0,1\}$ and $c$ is not a fixed point of $\psi_1$ nor $\psi_2$.
However, even in this case, we can prove that the index is one by the following direct computation.\begin{figure}[h]
\epsfig{file=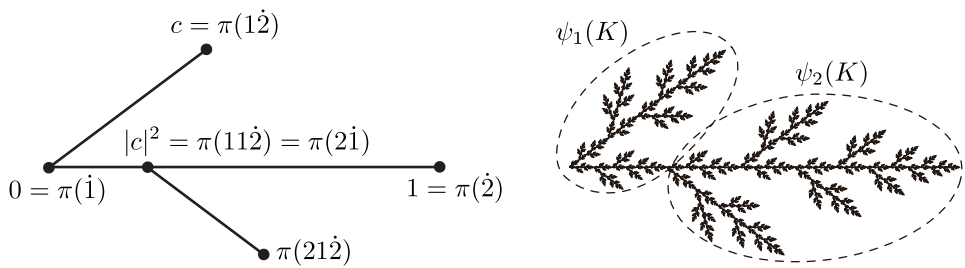}
\caption{Skeleton of Hata's tree-like set and images by contraction maps.}
\label{fig:hata}
\end{figure}
\begin{proposition}\label{prop:Hata}
Let $K$ be Hata's tree-like set and $\mu$, a Borel probability measure on $K$ with full support.
For a Dirichlet form $(\cE,\cF)$ on $L^2(K,\mu)$ associated with an arbitrary regular harmonic structure $(D,\bfr)$, the index is one.
\end{proposition}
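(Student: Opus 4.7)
The plan is to apply \Prop{indexK}, which reduces the claim to showing that the matrix $(Z^{q,q'}(x))_{q,q'\in V_0}$ has rank at most one for $\nu$-a.e.\ $x$, where $V_0 = \{c,0,1\}$; the opposite inequality then comes from \Prop{zero}. By \Lem{matrix} together with \Eq{P}, \Eq{D}, and \Prop{equivalence}, for $x\in K\setminus V_*$ with $[x]_m = w_1\cdots w_m$,
\[
(Z^{q,q'}(x))_{q,q'\in V_0}
 = \lim_{m\to\infty}\frac{-2/r_{[x]_m}}{\nu(K_{[x]_m})}\,{}^t A_{[x]_m}\,D\,A_{[x]_m}.
\]
Since $\ker D = \cI\cdot\R$ is one-dimensional and $A_w\bfone = \bfone$ for every $w$, the quadratic form ${}^tA_w D A_w$ factors through the two-dimensional quotient $l(V_0)/\cI \cong \R^2$. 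Letting $B_w$ denote the induced $2\times 2$ operator on this quotient, the task reduces to showing that, for every infinite word $w_1 w_2 \cdots$, the normalized product $B_{w_m}\cdots B_{w_1}/\|B_{w_m}\cdots B_{w_1}\|$ converges to a rank-one matrix as $m \to \infty$.

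A direct computation, using $\psi_1(0)=0$, $\psi_1(1)=c$, $\psi_2(1)=1$ and the branching relation $|c|^2 = \psi_1(c) = \psi_2(0)$, gives
\[
A_1 = \begin{pmatrix} \beta & \alpha & \gamma \\ 0 & 1 & 0 \\ 1 & 0 & 0 \end{pmatrix},
\qquad
A_2 = \begin{pmatrix} \delta & \epsilon & \zeta \\ \beta & \alpha & \gamma \\ 0 & 0 & 1 \end{pmatrix},
\]
where $\alpha = h_0(|c|^2)$, $\beta = h_c(|c|^2)$, $\gamma = h_1(|c|^2)$ and $\delta = h_c(\psi_2(c))$, $\epsilon = h_0(\psi_2(c))$, $\zeta = h_1(\psi_2(c))$. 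The maximum principle and the identity $h_c + h_0 + h_1 \equiv 1$ force all six parameters to lie in $[0,1]$ with $\alpha+\beta+\gamma = 1 = \delta+\epsilon+\zeta$. Taking $Qv = v(0)\bfone$, the induced $2\times 2$ operators on $\mathop{\rm Im}P$ in the basis $(e_c,e_1)$ become
\[
B_1 = \begin{pmatrix} \beta & \gamma \\ 1 & 0 \end{pmatrix},
\qquad
B_2 = \begin{pmatrix} \delta-\beta & \zeta-\gamma \\ -\beta & 1-\gamma \end{pmatrix}.
\]

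The final step is an explicit Perron--Frobenius-type argument: construct a proper closed convex cone $C \subset \R^2$ invariant under both $B_1$ and $B_2$, on which each $B_i$ acts as a strict contraction in the Hilbert projective metric. The nested images $B_{w_m}\cdots B_{w_1}(C)$ then have Hilbert diameter tending to zero, forcing the required rank-one projective limit for every infinite word. Combined with nondegeneracy of $D$ on $l(V_0)/\cI$, this yields $\rank(Z^{q,q'}(x)) \leq 1$ for every $x \in K\setminus V_*$, and hence the index equals exactly one.

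The main obstacle is the construction of the invariant cone and the verification of strict Hilbert-metric contraction. The six parameters $\alpha,\ldots,\zeta$ are not independent; they are coupled through the linear system defined by $D$, and the required sign and magnitude conditions on the entries of $B_1, B_2$ must be extracted from conditions (D1)--(D3), regularity $r_i \in (0,1)$, and the maximum principle. Once this finite-dimensional analysis is carried out, the contraction argument and the rank-one limit are standard.
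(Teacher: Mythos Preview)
Your reduction via \Prop{indexK} and \Lem{matrix} to the study of the normalized products ${}^tA_{[x]_m}DA_{[x]_m}$ is correct and matches the paper's setup exactly, as does the passage to $2\times2$ matrices $B_1,B_2$ acting on $l(V_0)/\cI$. The divergence is at the final step.

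The paper does not attempt a Hilbert-metric contraction. Instead it first classifies \emph{all} regular harmonic structures on Hata's tree: up to a scalar they form a one-parameter family
\[
D=\begin{pmatrix} -h & h & 0\\ h & -(h+1) & 1\\ 0 & 1 & -1\end{pmatrix},\qquad \bfr=(r,1-r^2),\quad rh=1,
\]
so that in your notation $\b=\dl=0$, $\a=\eps=1-r^2$, $\gm=\zt=r^2$. With these values one reads off immediately that $\rank A'_2=1$ (equivalently $B_2=\begin{pmatrix}0&0\\0&1-r^2\end{pmatrix}$ has rank one). Hence $\rank A'_w\le1$ for every $w\in W_m$ except $w=11\cdots1$; the single infinite word $111\cdots$ is a $\nu$-null set, and the martingale-convergence argument finishes as you describe.

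Your proposed route, by contrast, has a genuine obstruction: with the explicit values above, $B_1=\begin{pmatrix}0&r^2\\1&0\end{pmatrix}$ satisfies $B_1^2=r^2\,\Id$, so projectively $B_1$ is an involution. It is therefore \emph{not} a strict Hilbert-metric contraction on any proper invariant cone, and the normalized products $B_1^m/\|B_1^m\|$ do not converge to a rank-one matrix. Thus the claim ``the normalized product converges to a rank-one matrix for every infinite word'' is false for $111\cdots$, and more importantly no common cone can make both $B_i$ strict contractions. The argument cannot be completed along the lines you sketch; what rescues the situation is precisely the rank-one degeneracy of $B_2$, which makes the contraction machinery unnecessary. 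The ``obstacle'' you flag is not merely technical---the explicit classification of harmonic structures is what makes the key rank computation possible, and once it is done the rest is a one-line observation rather than a dynamical argument.
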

\begin{proof}
From an elementary calculation, all harmonic structures $(D,\bfr)$ are provided by
\[
  D=a\begin{pmatrix} -h & h& 0\\ h& -(h+1) &1 \\ 0 & 1 & -1\end{pmatrix},\quad
  \bfr=(r,1-r^2)
\]
for $a>0$, $0<r<1$ and $rh=1$ (cf.~\cite[Example~3.1.6]{Ki}), where we identify $l(\{c,0,1\})$ with $\R^3$.
We may assume $a=1$ to prove the claim.
Then, the matrices $A_1$ and $A_2$ defined by \Eq{A} are given by
\[
  A_1=\begin{pmatrix} 0 & 1-r^2 & r^2 \\ 0&1&0\\ 1&0&0\end{pmatrix},\quad
  A_2=\begin{pmatrix} 0 & 1-r^2 & r^2 \\ 0&1-r^2&r^2\\ 0&0&1\end{pmatrix}.
\]
As projections on $\cH$, take $Q=\begin{pmatrix}0&0&1\\0&0&1\\0&0&1\end{pmatrix}$ and $P=\Id-Q=\begin{pmatrix}1&0&-1\\0&1&-1\\0&0&0\end{pmatrix}$.
Then,
\[
  A'_1:=PA_1=\begin{pmatrix} -1 & 1-r^2 & r^2 \\ -1&1&0\\ 0&0&0\end{pmatrix}\quad\mbox{and}\quad
  A'_2:=PA_2=\begin{pmatrix} 0 & 1-r^2 & r^2-1 \\ 0&1-r^2&r^2-1\\ 0&0&0\end{pmatrix}.
\]
Therefore, $\rank A'_1=2$ and $\rank A'_2=1$.
This implies that for $w\in W_m$ with $m\in\N$, $\rank A'_w\le1$ unless $w=\underbrace{11\dotsm1}_{m}$.

For $q,q'\in V_0$, define $h_q$ and $Z^{q,q'}$ as in the statement before \Prop{indexK}.
For $n\in\N$ and $x\in K\setminus V_*$, let $Z_n^{q,q'}(x)=\nu_{h_q,h_{q'}}(K_{[x]_n})/\nu(K_{[x]_n})$, where $0/0:=1$.
Then, from \Lem{matrix}, matrix $\left(Z_n^{q,q'}(x)\right)_{q,q'\in V_0}$ is equal to $-2(r_{[x]_n}\nu(K_{[x]_n}))^{-1}\,{}^t\!A'_{[x]_n} D A'_{[x]_n}$.
Its rank is less than or equal to $1$ unless $[x]_n= \underbrace{11\dotsm1}_{n} $.
Since $\left(Z_n^{q,q'}(x)\right)_{q,q'\in V_0}$ converges to $Z(x):=\left(Z^{q,q'}(x)\right)_{q,q'\in V_0}$ $\nu$-a.e.\,$x$ as $n\to\infty$ by the martingale convergence theorem, and $\bigcap_{n=1}^\infty \{x\in K\setminus V_*\mid [x]_n= \underbrace{11\dotsm1}_{n}\}=\emptyset$, we conclude that $\rank Z(x)\le 1$ $\nu$-a.e.\,$x$.
From \Prop{indexK} and \Prop{zero}, the index is one.
\end{proof}
\begin{remark}
The index discussed in this section may be different from that defined in~\cite{PT08}.
The author does not know whether these two definitions are equivalent or not.
\end{remark}
\section{Derivatives of functions on p.c.f.\ fractals}
In this section, we discuss the concept of derivatives of functions on p.c.f.\ self-similar fractals.
We use the same notations as those in the previous section.
\begin{lemma}\label{lem:upper}
There exists a constant $c_6>0$ such that for any $f\in\cF$, $x\in K\setminus V_*$, and $n\in\Z_+$,
\[
  \Osc_{y\in K_{[x]_n}} f(y)\le c_6 \sqrt{r_{[x]_n} \nu_f(K_{[x]_n})}.
\]
\end{lemma}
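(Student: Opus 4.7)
The plan is to pull the problem back to the base fractal $K$ via the contraction $\psi_w$ with $w=[x]_n$, apply the global oscillation bound \Eq{poincare} there, and then transport both sides back to $K_w$ using the self-similar scaling of the energy form and of the energy measure.

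Concretely, first I would note that iterating the self-similarity identity \Eq{selfsimilarity} shows that for every $w\in W_*$ the pullback $\psi_w^*f$ belongs to $\cF$ whenever $f\in\cF$, and it satisfies the scaling $\cE(\psi_w^*f)\le r_w\,\cE(f)$ (indeed, summing \Eq{selfsimilarity} tells us each summand is finite). In particular $\psi_w^*f$ is in the domain of the Poincar\'e-type inequality \Eq{poincare}, so
\[
 \Osc_{y\in K}(\psi_w^*f)(y)\le c_5\sqrt{\cE(\psi_w^*f)}.
\]

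Next I would observe that since $\psi_w\colon K\to K_w$ is a continuous surjection (in fact a bijection onto $K_w$), a simple change of variables gives
\[
 \Osc_{y\in K_w}f(y)=\Osc_{z\in K}f(\psi_w(z))=\Osc_{z\in K}(\psi_w^*f)(z).
\]
On the other hand, applying \Lem{energymeas}~(iii) with $B=K_w=\psi_w(K)$ and using the injectivity of $\psi_w$ so that $\psi_w^{-1}(K_w)=K$, followed by the total-energy identity \Eq{total}, yields
\[
 \nu_f(K_w)=\frac{1}{r_w}\nu_{\psi_w^*f}(K)=\frac{2}{r_w}\cE(\psi_w^*f),
\]
i.e.\ $\cE(\psi_w^*f)=\tfrac12 r_w\,\nu_f(K_w)$.

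Combining these three displays with $w=[x]_n$ (valid since $x\in K\setminus V_*$ guarantees $[x]_n$ is unambiguously defined) gives
\[
 \Osc_{y\in K_{[x]_n}}f(y)\le c_5\sqrt{\tfrac12 r_{[x]_n}\nu_f(K_{[x]_n})},
\]
so the claim holds with $c_6:=c_5/\sqrt2$, independent of $f$, $x$, and $n$. There is no real obstacle here beyond checking that the pullback $\psi_w^*f$ really lies in $\cF$ and that the set-theoretic identity $\psi_w^{-1}(K_w)=K$ is legitimate; both are immediate from the standing assumptions, so the lemma reduces to a one-line scaling computation.
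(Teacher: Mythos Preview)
Your proof is correct and follows exactly the same approach as the paper: pull back by $\psi_{[x]_n}$, apply the oscillation bound \Eq{poincare}, and convert $\cE(\psi_{[x]_n}^*f)$ to $\tfrac12 r_{[x]_n}\nu_f(K_{[x]_n})$ via \Lem{energymeas}~(iii) and \Eq{total}. The paper simply compresses these steps into a single displayed chain of equalities and inequalities, arriving at the same constant $c_6=c_5/\sqrt2$.
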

\begin{proof*}
From \Eq{poincare}, \Eq{total}, and \Lem{energymeas}~(iii), we have
\[ \singlebox
\Osc_{y\in K_{[x]_n}} f(y)
=\Osc_{y\in K}(\psi^*_{[x]_n}f)(y)
\le c_5\sqrt{\cE(\psi^*_{[x]_n}f)}
=c_5\sqrt{\frac{1}{2}r_{[x]_n}\nu_f(K_{[x]_n})}.
\esinglebox \]
\end{proof*}

For $n\in\Z_+$, define a map $H_n\colon \cF\to\cH_n$ so that $H_n(f)$ is a unique function in $\cH_n$ satisfying $f=H_n(f)$ on $V_n$.
Note that $\cE(H_n(f))\le \cE(f)$ for all $n$ and $\|f-H_n(f)\|_\cF\to0$ as $n\to\infty$.
We write $H$ for $H_0$.

Let $g\in\cFz$, where $\cFz$ is defined in \Eq{Fzero}.
Let $g_n$ denote $H_n(g)$ for $n\in\Z_+$.
Take a sequence $\{n(k)\}\uparrow\infty$ such that
\begin{equation}\label{eq:convergent}
\sum_{k=1}^\infty \sqrt{\cE(g-g_{n(k)})}<\infty.
\end{equation}
\begin{lemma}\label{lem:lower1}
For $\nu_g$-a.e.\,$x$, 
\[
  \lim_{k\to\infty}\frac{\nu_{g_{n(k)}}(K_{[x]_{n(k)}})}{\nu_g(K_{[x]_{n(k)}})}=1.
\]
Here, by convention, $0/0:=1$.
\end{lemma}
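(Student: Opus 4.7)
The plan is to set $f_k = g - g_{n(k)}$ and reduce the claim, via inequality \Eq{energy}, to showing that
$$R_k(x) := \frac{\nu_{f_k}(K_{[x]_{n(k)}})}{\nu_g(K_{[x]_{n(k)}})} \longrightarrow 0 \quad \text{for $\nu_g$-a.e.\,$x$.}$$
Since $g \in \cFz$, the measure $\nu_g$ is a minimal energy-dominant measure, so $\nu_{f_k}\ll\nu_g$. The set of $x$ for which $\nu_g(K_{[x]_n})=0$ for some $n$ is contained in a countable union of $\nu_g$-null cells, so for $\nu_g$-a.e.\,$x$ the ratio $R_k(x)$ is well-defined and finite (and the convention $0/0:=1$ needs to be invoked only on a $\nu_g$-null set, on which $\nu_{f_k}$ also vanishes). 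Applying \Eq{energy} with $B=K_{[x]_{n(k)}}$ and dividing through by $\nu_g(K_{[x]_{n(k)}})$ yields
$$\left|\sqrt{\frac{\nu_{g_{n(k)}}(K_{[x]_{n(k)}})}{\nu_g(K_{[x]_{n(k)}})}}-1\right|^2 \le R_k(x),$$
so the reduction is complete.

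Next I will show the stronger statement $\sum_{k=1}^\infty \sqrt{R_k(x)}<\infty$ for $\nu_g$-a.e.\,$x$. Partitioning $K$ into the cells $\{K_w\}_{w\in W_{n(k)}}$ (which are disjoint modulo the $\nu_g$-null set $V_*$ by \Lem{energymeas}~(ii)) and integrating the function $\sqrt{R_k}$, which is constant on each cell of positive $\nu_g$-measure, gives
$$\int_K \sqrt{R_k(x)}\,d\nu_g(x) = \sum_{w\in W_{n(k)}} \sqrt{\nu_{f_k}(K_w)\,\nu_g(K_w)} \le \sqrt{\nu_{f_k}(K)\,\nu_g(K)} = 2\sqrt{\cE(f_k)\,\cE(g)},$$
by the Cauchy--Schwarz inequality applied to the sequences $\sqrt{\nu_{f_k}(K_w)}$ and $\sqrt{\nu_g(K_w)}$, together with \Eq{total}. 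Summing over $k$ and invoking \Eq{convergent},
$$\int_K \sum_{k=1}^\infty \sqrt{R_k(x)}\,d\nu_g(x) \le 2\sqrt{\cE(g)}\sum_{k=1}^\infty \sqrt{\cE(g-g_{n(k)})}<\infty,$$
so $\sum_k \sqrt{R_k(x)}<\infty$, and in particular $R_k(x)\to0$, for $\nu_g$-a.e.\,$x$.

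Combining the two steps completes the proof. The argument is really a soft one: the only potential obstacle is handling the cells of $\nu_g$-measure zero, which is harmless because $\nu_{f_k}\ll\nu_g$ forces the corresponding numerators to vanish and the exceptional set of $x$ has $\nu_g$-measure zero. Note that the particular summability $\sum_k\sqrt{\cE(g-g_{n(k)})}<\infty$, rather than the weaker $\sum_k\cE(g-g_{n(k)})<\infty$, is used here exactly to absorb the square root coming from the $L^1$-bound on $\sqrt{R_k}$.
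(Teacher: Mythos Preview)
Your proof is correct. Both your argument and the paper's follow the same overall scheme: bound an auxiliary nonnegative quantity whose convergence to zero implies the claim, show its $\nu_g$-integral is controlled by $\sqrt{\cE(g)}\sqrt{\cE(g-g_{n(k)})}$, and then invoke the summability \Eq{convergent} to conclude via a Borel--Cantelli/Fubini step.

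The technical route differs slightly. The paper works with Radon--Nikodym densities: it bounds $\int_K\bigl|1-d\nu_{g_{n(k)}}/d\nu_g\bigr|\,d\nu_g$ using \Eq{energyineq}, then passes to the conditional expectation $M_k$ of this density with respect to $\cB_{n(k)}$, and finally compares $M_k$ to the desired ratio by Jensen. You instead stay at the cell level throughout: you use the set-wise inequality \Eq{energy} to reduce to $R_k(x)=\nu_{f_k}(K_{[x]_{n(k)}})/\nu_g(K_{[x]_{n(k)}})$, and bound $\int_K\sqrt{R_k}\,d\nu_g$ directly by Cauchy--Schwarz on the finite sum over $W_{n(k)}$. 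Your approach is a bit more elementary in that it avoids Radon--Nikodym derivatives and conditional expectations altogether; the paper's route, on the other hand, fits naturally into the $L^1$-convergence framework of \Lem{energyineq} already developed. The resulting bounds differ only by a harmless constant ($2$ versus $4$).
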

\begin{proof}
For each $k\in\N$, from \Eq{energyineq}, 
\begin{align*}
\int_K\left|1-\frac{d\nu_{g_{n(k)}}}{d\nu_g}\right|\,d\nu_g
&\le\sqrt{2\cE(g-g_{n(k)})}\left(\sqrt{2\cE(g)}+\sqrt{2\cE(g_{n(k)})}\right)\\
&\le 4\sqrt{\cE(g)}\sqrt{\cE(g-g_{n(k)})}.
\end{align*}
Let $M_k$ be the conditional expectation of $\left|1-\frac{d\nu_{g_{n(k)}}}{d\nu_g}\right|$ with respect to $\nu_g$ for given $\cB_{n(k)}$, that is,
\[
  M_k(x)=\frac1{\nu_g(K_{[x]_{n(k)}})}\int_{K_{[x]_{n(k)}}}\left|1-\frac{d\nu_{g_{n(k)}}}{d\nu_g}\right|\,d\nu_g,\quad x\in K\setminus V_*,
\]
where the right-hand side is defined as $0$ if $\nu_g(K_{[x]_{n(k)}})=0$.
Then, 
\[
\int_K M_k\,d\nu_g 
= \int_K\left|1-\frac{d\nu_{g_{n(k)}}}{d\nu_g}\right|\,d\nu_g
\le 4\sqrt{\cE(g)}\sqrt{\cE(g-g_{n(k)})}.
\]
From \Eq{convergent}, 
\[
\int_K \left(\sum_{k=1}^\infty M_k\right)\,d\nu_g=\sum_{k=1}^\infty\int_K M_k\,d\nu_g<\infty.
\]
Therefore, for $\nu_g$-a.e.\,$x$, $\sum_{k=1}^\infty M_k(x)<\infty$, in particular, $\lim_{k\to\infty}M_k(x)=0$.
Since
\[
M_k(x)
\ge \frac1{\nu_g(K_{[x]_{n(k)}})}\left|\int_{K_{[x]_{n(k)}}}\left(1-\frac{d\nu_{g_{n(k)}}}{d\nu_g}\right)\,d\nu_g\right|
=\left|1-\frac{\nu_{g_{n(k)}}(K_{[x]_{n(k)}})}{\nu_{g}(K_{[x]_{n(k)}})}\right|,
\]
we obtain the claim.
\end{proof}
\begin{lemma}\label{lem:lower2}
There exists a constant $c_7>0$ satisfying the following:
for $\nu_g$-a.e.\,$x$, there exists $k_0(x)\in\N$ such that
\[
  \Osc_{y\in K_{[x]_{n(k)}}}g(y)
  \ge c_7 \sqrt{r_{[x]_{n(k)}}\nu_g(K_{[x]_{n(k)}})},
  \quad k\ge k_0(x).
\]
Moreover, for $\nu_g$-a.e.\,$x$, $\Osc_{y\in K_{[x]_{n}}}g(y)>0$ for every $n\in\Z_+$.
\end{lemma}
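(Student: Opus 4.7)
My plan is to bound the oscillation of $g$ on $K_{[x]_{n(k)}}$ from below by the oscillation of its harmonic replacement $g_{n(k)}=H_{n(k)}(g)$, and then convert the latter into a bound in terms of $\nu_g$ using \Lem{lower1}.

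First, since $g_n=g$ on $V_n$ and $\psi_{[x]_n}(V_0)\subset V_n$, the values of $g$ and $g_n$ agree on the boundary $\psi_{[x]_n}(V_0)$ of the cell $K_{[x]_n}$. Consequently
\[
  \Osc_{y\in K_{[x]_n}}g(y)\ge \Osc_{y\in\psi_{[x]_n}(V_0)}g(y)=\Osc_{y\in\psi_{[x]_n}(V_0)}g_n(y)=\Osc_{V_0}\bigl(\psi_{[x]_n}^{*}g_n\bigr),
\]
and the function $h:=\psi_{[x]_n}^{*}g_n$ lies in the finite-dimensional space $\cH$. On $\cH$, both $h\mapsto\Osc_{V_0}(h)$ and $h\mapsto\sqrt{\cE(h)}$ are seminorms vanishing exactly on the one-dimensional subspace $\cI$ of constants (by (D2) and the definition of oscillation, respectively), so they induce equivalent norms on the finite-dimensional quotient $\cH/\cI$. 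Hence there is a constant $c>0$, depending only on $(D,\bfr)$, such that $\Osc_{V_0}(h)\ge c\sqrt{\cE(h)}$ for every $h\in\cH$.

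Combining this with the self-similar scaling of energy (as in the proof of \Lem{upper}, using \Eq{total} and \Lem{energymeas}~(iii)), $\cE(\psi_{[x]_n}^{*}g_n)=\tfrac{1}{2}r_{[x]_n}\nu_{g_n}(K_{[x]_n})$, I obtain
\[
  \Osc_{y\in K_{[x]_n}}g(y)\ge c\sqrt{\tfrac{1}{2}\,r_{[x]_n}\,\nu_{g_n}(K_{[x]_n})}\qquad\text{for every }n\in\Z_+.
\]
Now specialise to $n=n(k)$. By \Lem{lower1}, for $\nu_g$-a.e.\,$x$ the ratio $\nu_{g_{n(k)}}(K_{[x]_{n(k)}})/\nu_g(K_{[x]_{n(k)}})$ tends to $1$, so there is $k_0(x)\in\N$ with $\nu_{g_{n(k)}}(K_{[x]_{n(k)}})\ge\tfrac{1}{2}\nu_g(K_{[x]_{n(k)}})$ for all $k\ge k_0(x)$. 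Plugging this in yields the desired bound with $c_7:=c/2$.

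For the final assertion, observe that the set $\bigcup\{K_w:w\in W_*,\,\nu_g(K_w)=0\}$ is a $\nu_g$-null set, so for $\nu_g$-a.e.\,$x$ we have $\nu_g(K_{[x]_n})>0$ for every $n\in\Z_+$. Fix such an $x$ (also belonging to the full-measure set on which the first assertion holds), and let $n\in\Z_+$ be arbitrary. Choose $k\ge k_0(x)$ large enough that $n(k)\ge n$; then $K_{[x]_{n(k)}}\subset K_{[x]_n}$ and the first assertion gives
\[
  \Osc_{y\in K_{[x]_n}}g(y)\ge \Osc_{y\in K_{[x]_{n(k)}}}g(y)\ge c_7\sqrt{r_{[x]_{n(k)}}\,\nu_g(K_{[x]_{n(k)}})}>0.
\]
The only delicate point is the equivalence of norms step, which however is immediate from finite-dimensionality once one checks that $\cE$ and $\Osc_{V_0}$ have the same kernel on $\cH$; there is no substantial obstacle beyond bookkeeping with the self-similar scaling.
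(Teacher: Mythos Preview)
Your proof is correct and follows essentially the same approach as the paper: both combine the finite-dimensional equivalence of the seminorms $\Osc_{V_0}(\cdot)$ and $\sqrt{\cE(\cdot)}$ on $\cH$ with the self-similar scaling of energy measures and \Lem{lower1}. The paper phrases the harmonic replacement as $H(\psi_{[x]_{n(k)}}^*g)$ rather than $\psi_{[x]_{n(k)}}^*g_{n(k)}$ (these coincide), and handles the second assertion via $K\setminus(V_*\cup\supp\nu_g)$ instead of the union of null cells, but the arguments are otherwise identical.
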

\begin{proof}
From \Lem{lower1}, for $\nu_g$-a.e.\,$x$, there exists $k_0(x)\in\N$ such that 
\begin{equation}\label{eq:lower21}
\nu_{g_{n(k)}}(K_{[x]_{n(k)}})\ge\frac12\nu_g(K_{[x]_{n(k)}}),
\quad k\ge k_0(x).
\end{equation}
Let $\widehat\cH=\{h\in\cH\mid \int_K h\,d\mu=0\}$.
Since $\widehat\cH$ is a finite dimensional space and both of the maps $\widehat\cH\ni h\mapsto \Osc_{y\in V_0}h(y)\in \R$ and $\widehat\cH\ni h\mapsto \sqrt{\cE(h)}\in\R$ provide norms on $\widehat\cH$, there exists a constant $c_8>0$ depending only on $(\cE,\cF)$ such that for every $h\in\widehat\cH$, 
\[
  \sqrt{\cE(h)}\le c_8 \Osc_{y\in V_0}h(y).
\]
This inequality also holds  for $h\in\cH$.
Since $\psi_{[x]_{n(k)}}^* g_{n(k)}$ coincides with $H(\psi_{[x]_{n(k)}}^* g)\in\cH$, we have
\begin{align}\label{eq:lower22}
\sqrt{r_{[x]_{n(k)}}\nu_{g_{n(k)}}(K_{[x]_{n(k)}})}
&=\sqrt{\nu_{H(\psi_{[x]_{n(k)}}^* g)}(K)} \qquad\mbox{(from \Lem{energymeas}~(iii))}\nonumber\\
&= \sqrt{2\cE({H(\psi_{[x]_{n(k)}}^* g)})} \qquad \mbox{(from \Eq{total})}\nonumber\\
&\le \sqrt{2}c_8\Osc_{y\in V_0}H(\psi_{[x]_{n(k)}}^* g)(y)\nonumber\\
&=  \sqrt{2}c_8\Osc_{y\in \psi_{[x]_{n(k)}} (V_0)}g(y)\nonumber\\
&\le  \sqrt{2}c_8\Osc_{y\in K_{[x]_{n(k)}}}g(y).
\end{align}
Combining \Eq{lower21} and \Eq{lower22}, we obtain the first claim.

To prove the second claim, let 
$B=\{x\in K\setminus V_*\mid\nu_g(K_{[x]_{n}})=0 \mbox{ for some }n\in\Z_+\}$.
 Since $\{K_{[x]_{n}}\}_{n=0}^\infty$ is fundamental system of neighborhoods of each $x\in K\setminus V_*$, $B$ is nothing but $K\setminus( V_*\cup\supp\nu_g)$.
Therefore, $\nu_g(B)=0$.
This and the first claim imply the second claim.
\end{proof}
The following is one of the main theorems of this section.
\begin{theorem}\label{th:main}
Suppose that the index of $(\cE,\cF)$ is one.
Let $g\in\cFz$.
Then, for every $f\in\cF$, for $\nu_g$-a.e.\,$x$, there exists a unique real number $\frac{df}{dg}(x)$ such that
\begin{equation}\label{eq:R}
  f(y)-f(x)=\frac{df}{dg}(x)(g(y)-g(x))+R_x(y),\quad y\in K,
\end{equation}
where $R_x(y)$ is negligible in the following sense:\begin{equation}\label{eq:negligible}
  \lim_{k\to\infty}\frac{\sup_{y\in K_{[x]_{n(k)}}}|R_x(y)|}{\Osc_{y\in K_{[x]_{n(k)}}} g(y)}=0,
\end{equation}
where the sequence $\{n(k)\}\uparrow\infty$ is taken so that \Eq{convergent} holds.
In particular, 
\begin{equation}\label{eq:negligible2}
  \liminf_{n\to\infty}\frac{\sup_{y\in K_{[x]_{n}}}|R_x(y)|}{\Osc_{y\in K_{[x]_{n}}} g(y)}=0.
\end{equation}
Moreover, it holds that
\begin{equation}\label{eq:Ef}
\cE(f)=\frac12\int_K\left(\frac{df}{dg}(x)\right)^2\,\nu_g(dx),
\end{equation}
and the map $\cF\ni f\mapsto \frac{df}{dg}\in L^2(K,\nu_g)$ is a bounded linear map.
If $g$ belongs to $\cH_*$, \Eq{negligible} can be replaced by the stronger claim
\[
\lim_{n\to\infty}\frac{\sup_{y\in K_{[x]_n}}|R_x(y)|}{\Osc_{y\in K_{[x]_n}} g(y)}=0.
\]
\end{theorem}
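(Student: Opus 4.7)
The plan is to define the derivative pointwise as the Radon--Nikodym derivative
\[
  \frac{df}{dg}(x) := \frac{d\nu_{f,g}}{d\nu_g}(x),\qquad \nu_g\text{-a.e.\,}x,
\]
which is well-defined because $g\in\cFz$ makes $\nu_g$ a minimal energy-dominant measure and hence $\nu_{f,g}\ll\nu_g$ by \Eq{schwarz}. The cornerstone of the argument is the pointwise identity
\[
  \frac{d\nu_f}{d\nu_g}(x) = \left(\frac{d\nu_{f,g}}{d\nu_g}(x)\right)^{\!2} \qquad \nu_g\text{-a.e.\,}x,
\]
which I obtain by applying \Defn{index}\,(ii) with $N=2$ and $(h_1,h_2)=(f,g)$: the matrix $\bigl((d\nu_{h_i,h_j}/d\nu_g)\bigr)_{i,j=1,2}$ has rank $\le 1$ $\nu_g$-a.e., its $(2,2)$-entry equals $1$, so vanishing of its determinant yields the identity. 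From this, \Eq{Ef} is immediate via $\nu_f(K)=2\cE(f)$ (\Eq{total}); linearity and boundedness of $\cF\ni f\mapsto df/dg\in L^2(K,\nu_g)$ (with operator norm $\le\sqrt{2}$) follow since $\|df/dg\|_{L^2(\nu_g)}^2=\nu_f(K)=2\cE(f)\le 2\|f\|_\cF^2$.

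Next, I would establish the Taylor expansion. Put $c:=(df/dg)(x)$ and $\varphi:=f-cg\in\cF$, so that $R_x(y)=\varphi(y)-\varphi(x)$ and $R_x(x)=0$; then
\[
  \sup_{y\in K_{[x]_n}}|R_x(y)|\le\Osc_{y\in K_{[x]_n}}\varphi(y)\le c_6\sqrt{r_{[x]_n}\nu_{f-cg}(K_{[x]_n})}
\]
by \Lem{upper}. Combined with $\Osc_{K_{[x]_{n(k)}}}g\ge c_7\sqrt{r_{[x]_{n(k)}}\nu_g(K_{[x]_{n(k)}})}$ from \Lem{lower2}, the ratio in \Eq{negligible} is dominated by $(c_6/c_7)\sqrt{\nu_{f-cg}(K_{[x]_{n(k)}})/\nu_g(K_{[x]_{n(k)}})}$. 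Expanding $\nu_{f-cg}=\nu_f-2c\,\nu_{f,g}+c^2\nu_g$ and using the martingale-convergence identification of Radon--Nikodym derivatives with cellwise ratios (the construction \Eq{K0}--\Eq{Zij} of Section~2, applied jointly to $\nu_f,\nu_{f,g},\nu_g$, all dominated by $\nu_g$), the ratio tends $\nu_g$-a.e.\ to
\[
  \frac{d\nu_f}{d\nu_g}(x)-2c\,\frac{d\nu_{f,g}}{d\nu_g}(x)+c^2=\frac{d\nu_f}{d\nu_g}(x)-c^2=0
\]
by the key identity, proving \Eq{negligible}; \Eq{negligible2} is an immediate weakening.

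Uniqueness is handled by the second statement of \Lem{lower2}: if another real $c'$ also worked, then $(c-c')(g(y)-g(x))=R'_x(y)-R_x(y)$ would satisfy \Eq{negligible} as well; but $g(x)\in[\inf_{K_{[x]_n}}g,\sup_{K_{[x]_n}}g]$ forces $\sup_{y\in K_{[x]_n}}|g(y)-g(x)|\ge\tfrac12\Osc_{K_{[x]_n}}g>0$, so $c=c'$. When $g\in\cH_m\subset\cH_*$, we have $H_n(g)=g$ for every $n\ge m$, so the conclusion of \Lem{lower1} holds trivially along the full sequence $n\ge m$, the conclusion of \Lem{lower2} extends to every such $n$, and the entire argument applies without passing to a subsequence, upgrading \Eq{negligible} to a true limit as $n\to\infty$. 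The only place I expect real care is needed is in selecting a common $\nu_g$-conull set on which the cellwise martingale ratios for all three measures $\nu_f,\nu_{f,g},\nu_g$ converge to their Radon--Nikodym derivatives simultaneously; this is a routine joint application of the Section~2 construction, but it must be arranged once and for all so that the pointwise identifications in the preceding display are valid at the same $x$.
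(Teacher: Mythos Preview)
Your proposal is correct and essentially parallels the paper's proof, with one genuine simplification in how the key identity is obtained. The paper builds an auxiliary linear map $\gamma\colon\cF\to L^2(K,\nu_g)$ by taking a c.o.n.s.\ $\{f_i\}$, invoking \Prop{squareroot} to write $d\nu_{f_i,f_j}/d\nu_g=\zeta^i\zeta^j$, and setting $\gamma(\sum a_if_i)=\sum a_i\zeta^i$; the derivative is then $a(x)=\gamma_f(x)\gamma_g(x)$, and the relation $\gamma_f\gamma_h=d\nu_{f,h}/d\nu_g$ (which in particular gives $a=d\nu_{f,g}/d\nu_g$, your definition) yields the vanishing of $d\nu_f/d\nu_g-2a\,d\nu_{f,g}/d\nu_g+a^2$. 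You bypass this apparatus entirely by applying the rank-one condition of \Defn{index}\,(ii) directly to the $2\times2$ matrix for $(f,g)$, whose $(2,2)$-entry is $1$ since $\nu_g$ is the reference measure; the vanishing determinant gives $d\nu_f/d\nu_g=(d\nu_{f,g}/d\nu_g)^2$ in one line. This is shorter and avoids \Prop{squareroot} and the c.o.n.s.\ scaffolding; the paper's $\gamma$ is a more structural object (a genuine gradient operator) but is not needed for the theorem itself.

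The remainder of your argument---the oscillation bounds via \Lem{upper} and \Lem{lower2}, the martingale-convergence identification of the cellwise ratios, the energy formula from \Eq{total}, and the piecewise-harmonic upgrade---matches the paper step for step. One minor difference: the paper proves uniqueness under the weaker \Eq{negligible2} (hence independent of the particular subsequence $\{n(k)\}$), whereas your uniqueness argument uses \Eq{negligible}; your version suffices for the theorem as stated, and your pointwise argument via $\sup_{y\in K_{[x]_n}}|g(y)-g(x)|\ge\tfrac12\Osc_{K_{[x]_n}}g$ is cleaner than the paper's more elaborate subsequence construction.
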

\begin{proof}
Take an arbitrary c.o.n.s.\ $\{f_i\}_{i=1}^\infty$ of $\cF$, and define $Z^{i,j}=d\nu_{f_i,f_j}/d\nu_g$ for $i,j\in\N$.
From \Prop{squareroot} and the assumption that the index is one, there exists a sequence $\{\zt^i\}_{i=1}^\infty$ of real-valued functions on $K$ such that $Z^{i,j}(x)=\zt^i(x)\zt^j(x)$ $\nu_g$-a.e.\,$x$.
For $f\in\cF$, $f$ is uniquely expressed as $\sum_{i=1}^\infty a_i f_i$ for some $(a_i)_{i\in\N}\in\ell_2$.
Define a map $\gm\colon \cF\to L^2(K,\nu_g)$ by $\gm(f)=\sum_{i=1}^\infty a_i \zt^i$ for $f=\sum_{i=1}^\infty a_i f_i$.
This is well-defined; indeed, for $f=\sum_{i=1}^k a_i f_i$ and $h=\sum_{i=1}^k b_if_i$ for some $k\in\N$,
\[
  \left(\sum_{i=1}^k a_i\zt^i(x)\right)\left(\sum_{j=1}^k b_j\zt^j(x)\right)
  =\sum_{i,j=1}^k a_ib_j Z^{i,j}(x)
  =\frac{d\nu_{f,h}}{d\nu_g}(x)
  \quad \mbox{for }\nu_g\mbox{-a.e.}\,x
\]
and
\[
  \int_K \frac{d\nu_{f,h}}{d\nu_g}(x)\,\nu_g(dx)
  =\nu_{f,h}(K)=2\cE(f,h),
\]
which imply that $\gm$ is not only well-defined as a bounded linear operator from $\cF$ to $L^2(K,\nu_g)$ but it also satisfies the relation
\begin{equation}\label{eq:gamma}
  \gm(f)\gm(h)=\frac{d\nu_{f,h}}{d\nu_g}\quad
  \nu_g\mbox{-a.e.}
\end{equation}
for every $f,h\in\cF$.
In particular, $|\gm(g)|=1$ $\nu_g$-a.e.
We write $\gm_f$ for $\gm(f)$.

Take $f\in\cF$ and define $a(x)=\gm_f(x)\gm_g(x)$.
We will show that we can take $a(x)$ as $\frac{df}{dg}(x)$.
For $x\in K$, define
\[
  R_x(y)=f(y)-f(x)-a(x)(g(y)-g(x)),\quad y\in K.
\]
Then, $R_x(\cdot)$ belongs to $\cF$, and we have
\begin{align}\label{eq:negl}
\frac{\nu_{R_x}(K_{[x]_n})}{\nu_g(K_{[x]_n})}
&=\frac{\nu_{f(\cdot)-a(x)g(\cdot)}(K_{[x]_n})}{\nu_g(K_{[x]_n})}\nonumber\\
&=\frac{\nu_f(K_{[x]_n})-2a(x)\nu_{f,g}(K_{[x]_n})+a(x)^2\nu_g(K_{[x]_n})}{\nu_g(K_{[x]_n})}\nonumber\\
&\stackrel{n\to\infty}{\longto} \frac{d\nu_f}{d\nu_g}(x)-2a(x)\frac{d\nu_{f,g}}{d\nu_g}(x)+a(x)^2\nonumber\\
&\qquad\quad \mbox{(for $\nu_g$-a.e.\,$x$, from the martingale convergence theorem)}\nonumber\\
&=\gm_f(x)^2-2a(x)\gm_f(x)\gm_g(x)+a(x)^2\qquad\mbox{(from \Eq{gamma})}\nonumber\\
&=0 \qquad \mbox{for }\nu_g\mbox{-a.e.\,}x.
\end{align}
Take a sequence $\{n(k)\}\uparrow\infty$ such that \Eq{convergent} holds.
From $R_x(x)=0$, \Lem{upper} and \Lem{lower2}, for $\nu_g$-a.e.\,$x$, for sufficiently large $k\in \N$,
\begin{equation}\label{eq:comparison}
\frac{\sup_{y\in K_{[x]_{n(k)}}}|R_x(y)|}{\Osc_{y\in K_{[x]_{n(k)}}} g(y)}
\le\frac{\Osc_{y\in K_{[x]_{n(k)}}}R_x(y)}{\Osc_{y\in K_{[x]_{n(k)}}} g(y)}
\le  \frac{c_6}{c_7}\sqrt{\frac{\nu_{R_x}(K_{[x]_{n(k)}})}{\nu_g(K_{[x]_{n(k)}})}}.
\end{equation}
Combining \Eq{negl} and \Eq{comparison}, we obtain \Eq{negligible}.
Since
\[
  \int_K\left(\frac{df}{dg}(x)\right)^2\,\nu_g(dx)
  = \int_K\gm_f(x)^2\,\nu_g(dx)
  =\int_K \frac{d\nu_f}{d\nu_g}\,d\nu_g
  =2\cE(f),
\]
we obtain \Eq{Ef}.

Next, we prove the uniqueness of $\frac{df}{dg}(x)$.
In fact, the weaker property \Eq{negligible2} does imply the uniqueness.
Suppose two functions $\widehat{\frac{df}{dg}}(x)$, $\widetilde{\frac{df}{dg}}(x)$ and remainder terms $\widehat R_x(y)$, $\widetilde R_x(y)$ satisfy \Eq{R} and \Eq{negligible2} with $\frac{df}{dg}(x)$ and $R_x(y)$ replaced by $\widehat{\frac{df}{dg}}(x)$, $\widetilde{\frac{df}{dg}}(x)$ and $\widehat R_x(y)$, $\widetilde R_x(y)$, respectively.
We can take an increasing sequence $\{n'(k)\}\uparrow\infty$ such that \Eq{convergent} holds with $\{n(k)\}$ replaced by $\{n'(k)\}$,
\[
  \lim_{k\to\infty}\frac{\sup_{y\in K_{[x]_{n'(2k)}}}|\widehat R_x(y)|}{\Osc_{y\in K_{[x]_{n'(2k)}}} g(y)}=0,
  \quad\mbox{and}\quad
   \lim_{k\to\infty}\frac{\sup_{y\in K_{[x]_{n'(2k+1)}}}|\widetilde R_x(y)|}{\Osc_{y\in K_{[x]_{n'(2k+1)}}} g(y)}=0.
\]
There exists $\frac{df}{dg}(x)$ for $\nu_g$-a.e.\,$x$ such that \Eq{R} and \Eq{negligible} hold with $n(k)$ replaced by $n'(k)$ in \Eq{negligible}.
We have
\begin{equation}\label{eq:difference}
  R_x(y)-\widehat R_x(y)
  =\left(\widehat{\frac{df}{dg}}(x)-\frac{df}{dg}(x)\right)(g(y)-g(x)),
  \quad y\in K.
\end{equation}
From \Lem{lower2}, for $\nu_g$-a.e.\,$x$, $\Osc_{y\in K_{[x]_{n}}}g(y)>0$ for every $n$.
Take such $x$ from $K\setminus V_*$.
For each $n\in\Z_+$, choose $y_n\in K_{[x]_n}$ such that $|g(y_n)-g(x)|\ge(1/2)\Osc_{y\in K_{[x]_n}}g(y)$.
From \Eq{difference}, for every $k\in\N$,
\begin{align*}
  \left| \widehat{\frac{df}{dg}}(x)-\frac{df}{dg}(x)\right|
  &=\left|\frac{R_x(y_{n'(2k)})-\widehat R_x(y_{n'(2k)})}{g(y_{n'(2k)})-g(x)}\right|\\
  &\le \frac{2\sup_{y\in K_{[x]_{n'(2k)}}}|R_x(y)|}{\Osc_{y\in K_{[x]_{n'(2k)}}} g(y)}
  + \frac{2\sup_{y\in K_{[x]_{n'(2k)}}}|\widehat R_x(y)|}{\Osc_{y\in K_{[x]_{n'(2k)}}} g(y)}.
\end{align*}
By letting $k\to\infty$, we have $\widehat{\frac{df}{dg}}(x)=\frac{df}{dg}(x)$ for $\nu_g$-a.e.\,$x$.
In the same way, we can prove that $\widetilde{\frac{df}{dg}}(x)=\frac{df}{dg}(x)$ for $\nu_g$-a.e.\,$x$.
Therefore, the uniqueness holds.

The last claim is obvious, since if $g\in \cH_m$ for some $m\in\Z_+$, then $H_n(g)=g$ for every $n\ge m$ and \Eq{convergent} holds by letting $n(k)=k$, $k\in\N$.
\end{proof}
\begin{remark}
\begin{enumerate}
\item 
  Pelander and Teplyaev obtained a result similar to  \Thm{main}~(\cite[Theorem~4.8]{PT08}); however, they proved the a.e.\ existence of derivatives  with respect to some self-similar measures.
  Since self-similar measures and energy measures are mutually singular in many cases (\cite{Hi05,HN06}), their result and ours are not comparable.

\item
  Even when $K$ is not a self-similar fractal, \Thm{main} should hold with suitable modification.
  Moreover, when the index $p$ is greater than one, it is expected that there exist suitable reference functions $g_1,\dotsc,g_p$ such that any function $f$ in $\cF$ has a Taylor expansion with respect to $g_1,\dotsc,g_p$ up to the first order.
  We leave such generalization for future studies.
\end{enumerate}
\end{remark}

In \Thm{main}, the reference function $g$ can be taken from a dense subset $\cFz$ of $\cF$; however, exactly what type of function can be chosen remains unresolved.
We will provide a partial solution to this problem.
Take $d$-dimensional level-$l$ Sierpinski gaskets as $K$ for $d\ge2$ and $l\ge2$ (see Figure~\ref{fig:sg}).
\begin{figure}[h]
\epsfig{file=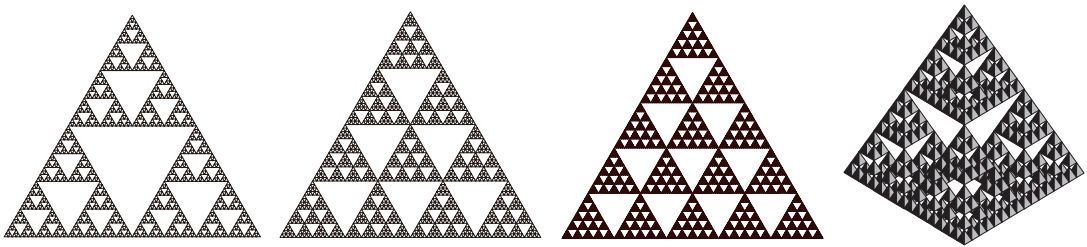, width=\textwidth}
\caption{two-dimensional level-$l$ Sierpinski gaskets $(l=2,4,5)$\newline and three-dimensional level-2 Sierpinski gasket.}
\label{fig:sg}
\end{figure}
Set $V_0$ consists of $(d+1)$ points denoted as $\{q_1,\dotsc,q_{d+1}\}$. 
We may assume that the index set $S$ of contraction maps $\{\psi_i\}_{i\in S}$ is a subset of $\N$, and for each $i\in\{1,\dotsc,d+1\}$, $q_i$ is a fixed point of $\psi_i$. 
There exists a canonical harmonic structure $(D,\bfr)$ since $K$ is a nested fractal, where $D=(D_{qq'})_{q,q'\in V_0}$ is provided by
\[
  D_{qq'}=\left\{\!\!\begin{array}{cl}-cd&\text{if }q=q'\\
  c& \text{if }q\ne q'\end{array}\right.
\]
for some $c>0$ and
$\bfr=(r_i)_{i\in S}$ satisfies $r_1=r_2=\dots=r_{d+1}=r$ for some $r\in(0,1)$.
We may assume $c=1$ without loss of generality.
Let $\mu$ be the normalized Hausdorff measure on $K$ and $(\cE,\cF)$, the Dirichlet form on $L^2(K,\mu)$ associated with $(D,\bfr)$.
We further assume that $(\cE,\cF)$ (or $(D,\bfr)$) is {\em nondegenerate}, namely, for every $i\in S$, the map $A_i\colon l(V_0)\to l(V_0)$ defined in \Eq{A} is bijective. In other words, there are no nonconstant harmonic functions that are constant on some small cells.
The author expects that this assumption is always true, although it has not yet been proved in general.
The following cases are nondegenerate:
\begin{enumerate}
\item $d\ge2$, $l=2$ (standard $d$-dimensional Sierpinski gaskets).
\item $d=2$, $2\le l\le50$.
\item $d=3$, $l=3$.
\end{enumerate}
In cases (1) and (3), the matrices $A_i$ are explicitly calculated and are proved to be invertible. In case~(2), the author checked the nondegeneracy by numerical computation with the aid of computers.
\begin{theorem}\label{th:sg}
Under the nondegeneracy condition, every nonconstant harmonic function $h$ belongs to $\cFz$, that is, its energy measure $\nu_h$ is a minimal energy-dominant measure.
In particular, for any nonconstant harmonic functions $h_1$ and $h_2$, $\nu_{h_1}$ and $\nu_{h_2}$ are mutually absolutely continuous.
\end{theorem}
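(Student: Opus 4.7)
The plan is to extract a rank-one decomposition from the index-one conclusion and reduce the theorem to a positivity statement for a linear pairing, then to combine self-similarity with the bijectivity of the local maps $A_i$ afforded by nondegeneracy. By \Thm{main0}, which applies since on any Sierpinski gasket each $q\in V_0$ is fixed by some $\psi_i$ and $K\setminus\{q\}$ is connected, the index of $(\cE,\cF)$ equals one. Fix a finite minimal energy-dominant measure $\nu$ and apply \Prop{squareroot} to the finite family $\{h_q\}_{q\in V_0}$ to produce Borel functions $\zeta^q\colon K\to\R$ with $Z^{q,q'}(x):=d\nu_{h_q,h_{q'}}/d\nu(x)=\zeta^q(x)\zeta^{q'}(x)$ $\nu$-a.e. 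Writing $\zeta(x):=(\zeta^q(x))_{q\in V_0}\in l(V_0)$ and using that $\bfone=\sum_q h_q\in\cF$ has zero energy, bilinearity of $\nu_{\cdot,\cdot}$ forces $(\bfone,\zeta(x))_{l(V_0)}=0$ $\nu$-a.e., i.e.\ $\zeta(x)\in\bfone^\perp$. For a harmonic function $h=\iota(u)$, the expansion $h=\sum_q u(q)h_q$ yields $d\nu_h/d\nu(x)=(u,\zeta(x))_{l(V_0)}^2$, so proving $\nu_h\in\cFz$ reduces to showing that $B_u:=\{x\in K\colon(u,\zeta(x))=0\}$ is $\nu$-null for every $u\in l(V_0)\setminus\R\bfone$.

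Next I would derive a cocycle for $\zeta$ and use it to study invariance of the ``bad set''. Combining \Prop{equivalence} with the identity $\psi_w^*h_q=\sum_{q'}(A_w)_{q',q}h_{q'}$ and rank-one uniqueness of the outer product, one obtains
\[
  \zeta(x)=\epsilon_w(x)\sqrt{\rho_w(x)}\,{}^tA_w\,\zeta(\psi_w^{-1}(x))\quad\mbox{on }K_w,
\]
where $\rho_w:=d\nu'_m/d\nu>0$ $\nu$-a.e.\ and $\epsilon_w(x)\in\{\pm 1\}$. Thus $(u,\zeta(x))=0$ on $K_w$ iff $(A_wu,\zeta(\psi_w^{-1}(x)))=0$, and in view of the mutual absolute continuity $\nu|_{K_w}\sim(\psi_w)_*\nu$ from \Prop{equivalence}, this passes to the equivalence $\nu(B_u\cap K_w)>0\iff\nu(B_{A_wu})>0$. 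Since nondegeneracy makes each $A_i$, and hence each $A_w$, a bijection of $l(V_0)$, we have $A_wu\notin\R\bfone$ for every $u\notin\R\bfone$, so the family $F:=\{u\in l(V_0)\setminus\R\bfone\colon\nu(B_u)>0\}$ is invariant under every $A_w$ and every $A_w^{-1}$.

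The final step, and the main obstacle, is to show $F=\emptyset$. Assuming $u\in F$, by the martingale convergence theorem applied to $1_{B_u}$ along $\{\cB_n\}$, one finds $x\in B_u$ with $\nu(B_u\cap K_{[x]_n})/\nu(K_{[x]_n})\to 1$; hence $u_n:=A_{[x]_n}u\in F$ for all large $n$ and, by the cocycle, $(u_n,\zeta(y_n))=0$ for every $n$, where $y_n:=\psi_{[x]_n}^{-1}(x)\in K$. Converting this into a contradiction is the difficult part: one must show that the products $\tilde B_{[x]_n}:=B_{x_n}\cdots B_{x_1}$, where $B_i:=PA_iP$ is a bijection of $\bfone^\perp$ by nondegeneracy, have dominant singular direction $\nu$-a.e.\ not lying in any fixed hyperplane $u^\perp$. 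For the Sierpinski gaskets this reduces to a Furstenberg--Oseledets-type statement: strict positivity of the top Lyapunov exponent and irreducibility of the semigroup generated by the $B_i$'s, both afforded by nondegeneracy, prevent the asymptotic direction from being trapped in any proper subspace. When the symmetry group of $V_0$ acts transitively (which holds for standard Sierpinski gaskets), one may alternatively exploit it to transfer the hypothetical ``bad'' $u$ across all of $l(V_0)\setminus\R\bfone$ and reach a contradiction with the fact that $\bigcap_{v\notin\R\bfone}B_v\subseteq\{\zeta=0\}$ is $\nu$-null by \Prop{zero}. Once $F=\emptyset$, every nonconstant harmonic $h$ belongs to $\cFz$, and the mutual absolute continuity of $\nu_{h_1}$ and $\nu_{h_2}$ for nonconstant $h_1,h_2$ then follows immediately from the uniqueness-up-to-equivalence property of minimal energy-dominant measures in \Defn{mdem}.
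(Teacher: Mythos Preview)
Your reduction is correct and elegant: the rank-one decomposition $Z^{q,q'}=\zeta^q\zeta^{q'}$, the observation $\zeta(x)\in\bfone^\perp$, the cocycle $\zeta(x)\propto{}^tA_w\zeta(\psi_w^{-1}(x))$, and the resulting $A_w$-invariance of the bad set $F$ are all sound. The gap is squarely in the final step, which you yourself flag as ``the main obstacle'' and do not actually carry out.

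The Furstenberg--Oseledets appeal does not go through as stated. Such results require an i.i.d.\ or at least stationary ergodic structure on the sequence of matrices $B_{x_1},B_{x_2},\dotsc$, which means a product-type measure on $\Sigma$. But the measure governing the sequence $[x]_n$ here is the minimal energy-dominant measure $\nu$, which is not self-similar in any simple sense (indeed, energy measures on p.c.f.\ fractals are typically singular with respect to self-similar measures), so you have no stationarity to invoke. Moreover, even granting a top Lyapunov exponent, you would need the \emph{direction} of ${}^tA_{[x]_n}\zeta(y_n)$ to avoid the hyperplane $u^\perp$, and since $y_n$ varies with $n$ this is not the standard Oseledets conclusion. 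The alternative symmetry argument is also incomplete: transitivity of the symmetry group on $V_0$ does not give transitivity on directions in $\bfone^\perp$, and there is no reason the $S_{d+1}$-orbit of a single $u\in F$ should exhaust enough of $\bfone^\perp$ to force $\zeta=0$.

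The paper's proof shares your density-point opening but then takes a completely different, deterministic route in place of random matrix theory. After zooming in and normalizing to obtain $g',h'\in\cH'$ with $\nu_{g'}\perp\nu_{h'}$, it exploits the explicit Perron--Frobenius structure of a \emph{single} $A_{q}$: the second eigenvalue $r$ is simple with eigenvectors $u_q$ (for ${}^tA_q$) and $\tilde v_q$ (for $A_q$) given by concrete formulas, and the key computable fact is $(u_{q'},\tilde v_q)_{l(V_0)}\ne0$ for \emph{all} $q,q'\in V_0$. Iterating $\psi_{q_i}^*$ drives any nonconstant harmonic function (after normalization) to $J(\iota(\tilde v_{q_i}))$; doing this twice forces both $g'$ and $h'$ to the same limit, contradicting $\nu_f\perp\nu_f$ for $f=J(\iota(\tilde v_{q_j}))$. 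This replaces your missing ergodic-theoretic input with a two-line eigenvector calculation specific to the Sierpinski gasket, and is where the actual content of the nondegeneracy hypothesis is cashed out.
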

For the proof, we prepare a series of lemmas.
For $i\in\{1,\dotsc,d+1\}$, define $h_i\in\cH$ so that $h_i(x)=\begin{cases}1&(x=q_i)\\0&(x\ne q_i)\end{cases}$ for $x\in V_0$.
Let $\nu=\sum_{i=1}^{d+1}\nu_{h_i}$.
\begin{lemma}\label{lem:sum}
Measure $\nu$ is a minimal energy-dominant measure.
\end{lemma}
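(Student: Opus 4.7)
The plan is to verify both conditions in \Defn{mdem} directly. Minimality, condition (ii), is essentially immediate: if a $\sg$-finite Borel measure $\nu'$ satisfies $\nu_f \ll \nu'$ for every $f \in \cF$, then in particular $\nu_{h_i} \ll \nu'$ for each $i=1,\dotsc,d+1$, and therefore $\nu = \sum_{i=1}^{d+1}\nu_{h_i} \ll \nu'$. The substantive content is the domination condition (i), namely $\nu_f \ll \nu$ for every $f \in \cF$. By \Lem{basic} and the density of $\cH_*$ in $\cF$, it suffices to establish this for $f \in \cH_m$ for every $m \in \Z_+$.

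So fix $m$, $f \in \cH_m$, and a Borel set $B$ with $\nu(B) = 0$; the goal is $\nu_f(B) = 0$. By \Lem{energymeas}~(i),
\[
  \nu_f(B) = \sum_{w \in W_m} r_w^{-1}\,\nu_{\psi_w^* f}(\psi_w^{-1}(B)).
\]
For each $w \in W_m$, $\psi_w^* f \in \cH$, so write $\psi_w^* f = \sum_{i=1}^{d+1} c_{w,i}\, h_i$. Iterating the triangle inequality for $\nu_{\cdot}(E)^{1/2}$ (which follows from \Eq{energy}) and squaring, one gets $\nu_{\psi_w^* f}(E) \le C_w \nu(E)$ for every Borel $E$, where $C_w$ depends only on the $(c_{w,i})_i$. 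The task therefore reduces to proving $\nu(\psi_w^{-1}(B)) = 0$ for each individual $w \in W_m$.

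This is the step where the nondegeneracy hypothesis is essential. Because each $A_i$ is bijective, so is the composition $A_w = A_{w_m}\dotsm A_{w_1}$, and the identity $\psi_w^* h_i = \sum_{j=1}^{d+1}(A_w)_{ji}\, h_j$ (a direct consequence of \Eq{A}) inverts to express each $h_i$ as a linear combination of $\{\psi_w^* h_j\}_{j=1}^{d+1}$. A second application of the same triangle-inequality argument then gives $\nu_{h_i}(E) \le C'_w \sum_{j=1}^{d+1} \nu_{\psi_w^* h_j}(E)$ for an appropriate constant $C'_w$. Since $\psi_w(K) \subset K_w$, we have $\psi_w^{-1}(B) = \psi_w^{-1}(B \cap K_w)$, and \Lem{energymeas}~(iii) yields
\[
  \nu_{\psi_w^* h_j}(\psi_w^{-1}(B)) = r_w\, \nu_{h_j}(B \cap K_w) \le r_w\, \nu(B) = 0.
\]
Summing over $i$ produces $\nu(\psi_w^{-1}(B)) = 0$ and closes the chain of estimates.

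The main obstacle is precisely the step relying on invertibility of $A_w$: without nondegeneracy, some $A_w$ may have a nontrivial kernel, so that $h_i$ need not lie in the linear span of $\{\psi_w^* h_j\}$, and a Borel set $B$ with $\nu(B) = 0$ could in principle satisfy $\nu(\psi_w^{-1}(B)) > 0$. The reduction breaks down at exactly that point, which explains why the structural hypothesis stated just before the lemma is built in.
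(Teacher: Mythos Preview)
Your proof is correct and uses the same ingredients as the paper's: density of $\cH_*$ together with \Lem{basic}, the nondegeneracy hypothesis (bijectivity of each $A_w$), and \Lem{energymeas}~(iii). The paper's argument is slightly more direct: rather than writing $\psi_w^* f$ in the basis $\{h_i\}$ and then separately proving $\nu(\psi_w^{-1}(B))=0$, it observes that bijectivity of $\psi_w^*\colon\cH\to\cH$ means $\{\psi_w^* h_1,\dotsc,\psi_w^* h_{d+1}\}$ already spans $\cH$, so one can write $\psi_w^* f$ directly as a combination of the $\psi_w^* h_i$; then a single application of \Lem{energymeas}~(iii) gives $\nu_f(B)\le c_9\,\nu(B)$ for $B\subset K_w$. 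Your two-step route (expand in $\{h_i\}$, then invert to control $\nu(\psi_w^{-1}(B))$) is logically equivalent but takes a small detour through what is essentially the content of \Prop{equivalence}.
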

\begin{proof}
Since $A_i\colon l(V_0)\to l(V_0)$ is bijective for every $i\in S$, $\psi_w^*\colon \cH\to \cH$ is also bijective for every $w\in W_*$.

Let $m\in\Z_+$ and $f\in\cH_m$.
For any $w\in W_m$, $\psi_w^* f\in\cH$, and the linear span of $\{\psi_w^* h_1,\dots,\psi_w^* h_{d+1}\}$ is equal to $\cH$.
Therefore, $\psi_w^* f$ is expressed as a linear combination of $\psi_w^* h_1,\dots,\psi_w^* h_{d+1}$.
This implies that there exists a constant $c_9>0$ such that
$\nu_{\psi_w^* f}\le c_9\sum_{i=1}^{d+1}\nu_{\psi_w^* h_i}$.
For any Borel subset $B$ of $K_w$, from \Lem{energymeas}~(iii),
\[
\nu_f(B)=\frac1{r_w}\nu_{\psi_w^* f}(\psi_w^{-1}(B))
\le \frac{c_9}{r_w}\sum_{i=1}^{d+1}\nu_{\psi_w^* h_i}(\psi_w^{-1}(B))
=c_9\sum_{i=1}^{d+1}\nu_{h_i}(B)
=c_9\nu(B).
\]
Therefore, $\nu_f$ on $K_w$ is absolutely continuous with respect to $\nu$ on $K_w$.
Since $w\in W_m$ is arbitrary, we obtain $\nu_f\ll \nu$.

Since $\cH_*$ is dense in $\cF$, we conclude from \Lem{basic} that $\nu$ is a minimal energy-dominant measure.
\end{proof}
\begin{lemma}\label{lem:existence}
There exists $g\in\cH$ such that $\nu_g$ is a minimal energy-dominant measure.
\end{lemma}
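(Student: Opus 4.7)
The plan is to realize $g$ as a generic linear combination of the basis harmonic functions $h_1,\dotsc,h_{d+1}$, which amounts to the finite-dimensional version of the argument for \Prop{dense} with Lebesgue measure on $\R^{d+1}$ playing the role of the Gaussian measure on $\ell_2$. This is possible precisely because $\cH$ is finite dimensional and $\{h_i\}$ already spans it.

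By \Lem{sum}, the measure $\nu:=\sum_{i=1}^{d+1}\nu_{h_i}$ is a minimal energy-dominant measure, and by construction $\sum_{i=1}^{d+1}Z^{i,i}(x)=1$ for $\nu$-a.e.\ $x$, where $Z^{i,j}(x):=(d\nu_{h_i,h_j}/d\nu)(x)$. In particular, the symmetric nonnegative definite matrix $Z(x)=(Z^{i,j}(x))_{i,j=1}^{d+1}$ is nonzero $\nu$-a.e., so $\ker Z(x)$ is a proper linear subspace of $\R^{d+1}$ for $\nu$-a.e.\ $x$. For $\bfa=(a_1,\dotsc,a_{d+1})\in\R^{d+1}$, set $g_\bfa:=\sum_{i=1}^{d+1}a_ih_i\in\cH$. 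By bilinearity of the mutual energy measures, $\nu_{g_\bfa}\ll\nu$ with
\[
\frac{d\nu_{g_\bfa}}{d\nu}(x)=\la \bfa,Z(x)\bfa\ra_{\R^{d+1}},
\]
so what is required is to find $\bfa$ such that $\bfa\notin\ker Z(x)$ for $\nu$-a.e.\ $x$.

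Consider the Borel set $X=\{(x,\bfa)\in K\times\R^{d+1}\mid \bfa\in\ker Z(x)\}$. For $\nu$-a.e.\ $x$, the $x$-slice $\ker Z(x)$ is a proper linear subspace of $\R^{d+1}$ and hence has Lebesgue measure zero; by Fubini's theorem, $X$ has measure zero for the product of $\nu$ and Lebesgue measure. Consequently, for Lebesgue-a.e.\ $\bfa\in\R^{d+1}$ the $\bfa$-slice $\{x\mid \bfa\in\ker Z(x)\}$ has $\nu$-measure zero; fixing any such $\bfa$ yields $d\nu_{g_\bfa}/d\nu>0$ $\nu$-a.e., which combined with $\nu_{g_\bfa}\ll\nu$ shows that $\nu$ and $\nu_{g_\bfa}$ are mutually absolutely continuous. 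Setting $g:=g_\bfa$ then gives an element of $\cH$ whose energy measure is minimal energy-dominant.

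I do not anticipate a serious obstacle. The only substantive ingredient beyond routine bookkeeping is the trace identity $\sum_i Z^{i,i}\equiv 1$ $\nu$-a.e., which takes the place of \Eq{nondeg} in the proof of \Prop{dense} and makes the Fubini step succeed. The remark following \Prop{dense}---that the averaging measure need only have full support and assign zero mass to every proper closed subspace---already anticipates that Lebesgue measure on a finite-dimensional space is adequate in this setting.
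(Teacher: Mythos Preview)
Your argument is correct and essentially identical to the paper's own proof: the paper also computes $d\nu_{g}/d\nu(x)=(\bfa,Z(x)\bfa)_{\R^{d+1}}$, observes that $Z(x)$ is nonzero $\nu$-a.e.\ (so its kernel is a proper subspace), and applies Fubini with Lebesgue measure on $\R^{d+1}$ to find a generic $\bfa$ for which the density is positive $\nu$-a.e. The paper even prefaces its proof with the remark that it is the simpler, finite-dimensional analogue of \Prop{dense}, exactly as you anticipated.
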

\begin{proof}
The proof is similar to that of \Prop{dense}, and it is simpler.
Set $Z^{i,j}(x)=(d\nu_{h_i,h_j}/d\nu)(x)$ for $i,j\in\{1,\dotsc,d+1\}$, and define $Z(x)=\left(Z^{i,j}(x)\right)_{i,j=1}^{d+1}$, $x\in K$.

Any $h\in\cH$ can be expressed uniquely as $h=\sum_{i=1}^{d+1}a_i h_i$ for some $\bfa={}^t(a_1,\dots,a_{d+1})\in\R^{d+1}$.
Then, 
\[
 \frac{d\nu_h}{d\nu}(x)=\sum_{i,j=1}^{d+1}a_ia_j\frac{d\nu_{h_i,h_j}}{d\nu}(x)
 =(\bfa,Z(x)\bfa)_{\R^{d+1}}
 \quad \nu\mbox{-a.e.\,}x.
\]
Since $Z(x)$ is not a zero matrix for $\nu$-a.e.\,$x$ from the definition of $\nu$, $\ker Z(x)$ is a proper subspace of $\R^{d+1}$.
Therefore, when $\lm$ is the Lebesgue measure on $\R^{d+1}$, for $\nu$-a.e.\,$x$, $(\bfa,Z(x)\bfa)_{\R^{d+1}}>0$ for $\lm$-a.e.\,$\bfa$.
From the Fubini theorem, for $\lm$-a.e.\,$\bfa$, $(\bfa,Z(x)\bfa)_{\R^{d+1}}>0$ for $\nu$-a.e.\,$x$.
In particular, for such $\bfa={}^t(a_1,\dots,a_{d+1})\in\R^{d+1}$, $g:=\sum_{i=1}^{d+1}a_i h_i$ satisfies $(d\nu_g/d\nu)(x)>0$ $\nu$-a.e.\,$x$.
Therefore, $\nu_g$ is a minimal energy-dominant measure.
\end{proof}
\begin{lemma}\label{lem:singularity}
Suppose that two sequences $\{u_n\}_{n=1}^\infty$ and $\{v_n\}_{n=1}^\infty$ in $\cF$ converge $u$ and $v$ in $\cF$, respectively.
If there exist a sequence of Borel sets $\{B_n\}_{n=1}^\infty$ of $K$ such that $\lim_{n\to\infty}\nu_{u_n}(K\setminus B_n)=0$ and $\lim_{n\to\infty}\nu_{v_n}(B_n)=0$, then $\nu_u\perp\nu_v$.
In particular, if $\nu_{u_n}\perp\nu_{v_n}$ for every $n$, then $\nu_u\perp\nu_v$.
\end{lemma}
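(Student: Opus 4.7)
The plan is to first transfer the hypothesis from the approximating sequences to $u$ and $v$ themselves, and then construct a single partition $K = A \sqcup (K\setminus A)$ separating $\nu_u$ and $\nu_v$ by a Borel--Cantelli argument along a subsequence. The key tool is inequality \Eq{energy}, which guarantees continuity of the map $f\mapsto \sqrt{\nu_f(\cdot)}$ in the $\cF$-norm uniformly in the Borel set.

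First I would apply \Eq{energy} to $u-u_n$ on $K\setminus B_n$ to obtain
\[
  \sqrt{\nu_u(K\setminus B_n)} \le \sqrt{\nu_{u_n}(K\setminus B_n)} + \sqrt{\nu_{u-u_n}(K\setminus B_n)} \le \sqrt{\nu_{u_n}(K\setminus B_n)} + \sqrt{2\cE(u-u_n)},
\]
and both terms tend to $0$ by hypothesis and by convergence in $\cF$. Analogously one gets $\nu_v(B_n)\to 0$. Next, extract a subsequence $\{n_k\}_{k\in\N}$ such that $\nu_u(K\setminus B_{n_k})<2^{-k}$ and $\nu_v(B_{n_k})<2^{-k}$ for every $k$.

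Now set $A=\limsup_{k\to\infty} B_{n_k}=\bigcap_{N=1}^\infty\bigcup_{k\ge N}B_{n_k}$. Since $\sum_{k}\nu_v(B_{n_k})<\infty$, the Borel--Cantelli-type estimate
\[
  \nu_v(A)\le \nu_v\!\left(\bigcup_{k\ge N}B_{n_k}\right)\le \sum_{k\ge N}\nu_v(B_{n_k})\xrightarrow[N\to\infty]{}0
\]
yields $\nu_v(A)=0$. On the other hand $K\setminus A=\bigcup_{N=1}^\infty\bigcap_{k\ge N}(K\setminus B_{n_k})$ is an increasing union, and for each $N$,
\[
  \nu_u\!\left(\bigcap_{k\ge N}(K\setminus B_{n_k})\right)\le \nu_u(K\setminus B_{n_N})<2^{-N},
\]
so letting $N\to\infty$ gives $\nu_u(K\setminus A)=0$. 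The decomposition $K=A\sqcup(K\setminus A)$ then shows $\nu_u\perp\nu_v$.

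For the ``in particular'' assertion, given $\nu_{u_n}\perp\nu_{v_n}$ one simply chooses for each $n$ a Borel set $B_n$ with $\nu_{u_n}(K\setminus B_n)=0$ and $\nu_{v_n}(B_n)=0$; the two limit conditions then hold trivially, and the main statement applies. I do not foresee a genuine obstacle here; the only point requiring some care is the subsequence extraction needed to make the Borel--Cantelli argument applicable to $\nu_v$ (while for $\nu_u$ a pointwise-in-$N$ bound on $K\setminus B_{n_N}$ already suffices, no summability needed).
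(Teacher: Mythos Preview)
Your proof is correct and follows essentially the same route as the paper: both use inequality \Eq{energy} to control $\nu_u(K\setminus B_n)$ and $\nu_v(B_n)$, extract a summable subsequence, and take $A=\limsup_k B_{n_k}$ as the separating set. The only cosmetic difference is that you first pass to the limits $u,v$ and then choose the subsequence, whereas the paper chooses the subsequence on the approximants $u_{n(m)},v_{n(m)}$ and passes to the limit inside each estimate; the resulting bounds and the Borel--Cantelli conclusion are identical.
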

\begin{proof}
Take a sequence $\{n(m)\}\uparrow\infty$ such that $\cE(v-v_{n(m)})\le2^{-m}$ and $\nu_{v_{n(m)}}(B_{n(m)})\le2^{-m}$ for every $m\in\N$.
Set
\[
  \tilde B_k=\bigcup_{m=k}^\infty B_{n(m)}\text{ for } k\in\N,\quad\mbox{and}\quad \tilde B=\bigcap_{k=1}^\infty \tilde B_k.
\]
Then, for any $k\in\N$ and $m\ge k$,
\begin{align*}
\sqrt{\nu_u(K\setminus\tilde B_k)}
&\le \sqrt{\nu_{u_{n(m)}}(K\setminus\tilde B_k)}
    + \sqrt{\nu_{u-u_{n(m)}}(K\setminus\tilde B_k)}
    \qquad \mbox{(from \Eq{energy})}\\
&\le \sqrt{\nu_{u_{n(m)}}(K\setminus B_{n(m)})}
    + \sqrt{ 2\cE(u-u_{n(m)})}\\
&\to 0 \qquad\text{as }m\to\infty.
\end{align*}
Therefore, $\nu_u(K\setminus\tilde B_k)=0$ for every $k\in\N$, which implies $\nu_u(K\setminus\tilde B)=0$.
On the other hand, for $m\in\N$,
\begin{align*}
\sqrt{\nu_{v}(B_{n(m)})}
&\le \sqrt{\nu_{v_{n(m)}}(B_{n(m)})}
    + \sqrt{\nu_{v-v_{n(m)}}(B_{n(m)})}\\
&\le 2^{-m/2}+ \sqrt{ 2\cE(v-v_{n(m)})}\\
&\le (1+\sqrt2)2^{-m/2}.
\end{align*}
Then, for $k\in\N$,
\[
  \nu_{v}(\tilde B_k)\le \sum_{m=k}^\infty(1+\sqrt2)^2 2^{-m}
  =(1+\sqrt2)^2 2^{1-k}.
\]
Therefore, $\nu_v(\tilde B)=0$.
This concludes that $\nu_u\perp\nu_v$.
\end{proof}
\begin{lemma}\label{lem:singular}
Let $f_1,f_2\in\cF$ and suppose $\nu_{f_1}\perp\nu_{f_2}$.
Then, for any $w\in W_*$, $\nu_{\psi_w^*f_1}\perp\nu_{\psi_w^*f_2}$.
\end{lemma}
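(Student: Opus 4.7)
The plan is to transport a singularity witness from $K_w$ back to $K$ through the bijection $\psi_w \colon K \to K_w$, using the scaling identity for energy measures (\Lem{energymeas}~(iii)). The intuition is that the energy measure of $\psi_w^* f$ is, up to the factor $r_w$, the pullback of the restriction $\nu_f|_{K_w}$ by $\psi_w$, so singularity of $\nu_{f_1}|_{K_w}$ and $\nu_{f_2}|_{K_w}$ transfers immediately.

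First, since $\nu_{f_1} \perp \nu_{f_2}$, I would fix a Borel set $B \subset K$ witnessing the singularity: $\nu_{f_1}(B) = 0$ and $\nu_{f_2}(K \setminus B) = 0$. Intersecting with $K_w$ preserves both vanishing statements, so $\nu_{f_1}(B \cap K_w) = 0$ and $\nu_{f_2}((K \setminus B) \cap K_w) = 0$.

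Second, because each $\psi_i$ is injective, $\psi_w$ is a (Borel) bijection from $K$ onto $K_w$. I would set
\[
C := \psi_w^{-1}(B \cap K_w) \subset K,
\]
and note the set-theoretic identity $\psi_w(K \setminus C) = K_w \setminus (B \cap K_w) = (K \setminus B) \cap K_w$, which uses injectivity of $\psi_w$. Applying \Lem{energymeas}~(iii) in the reformulated shape
\[
\nu_{\psi_w^* f}(A) = r_w\, \nu_f(\psi_w(A)),\qquad A \in \cB(K),
\]
(equivalent to the stated form since $\psi_w^{-1}(\psi_w(A)) = A$) to $f = f_1, A = C$ and to $f = f_2, A = K \setminus C$ yields respectively
\[
\nu_{\psi_w^* f_1}(C) = r_w\,\nu_{f_1}(B \cap K_w) = 0,\qquad \nu_{\psi_w^* f_2}(K \setminus C) = r_w\,\nu_{f_2}((K \setminus B) \cap K_w) = 0,
\]
which is precisely $\nu_{\psi_w^* f_1} \perp \nu_{\psi_w^* f_2}$.

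There is no real obstacle; the entire argument is a direct application of the scaling relation, and the only subtle point is making sure the preimage set $C$ is handled correctly under the bijection $\psi_w \colon K \to K_w$ (which uses injectivity of the $\psi_i$, an assumption already in force). For general $w = w_1\cdots w_m \in W_*$ the same argument works either directly or by iterating the one-step case $w \in S$.
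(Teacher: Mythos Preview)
Your proof is correct and essentially identical to the paper's: both take a singularity witness $B$ and apply \Lem{energymeas}~(iii) to show that $\psi_w^{-1}(B)$ (your $C$, since $\psi_w^{-1}(B)=\psi_w^{-1}(B\cap K_w)$) witnesses the singularity of $\nu_{\psi_w^*f_1}$ and $\nu_{\psi_w^*f_2}$. The only difference is cosmetic---the paper applies \Lem{energymeas}~(iii) in its stated form rather than the reformulated pushforward version you wrote.
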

\begin{proof}
There exists a Borel set $B$ of $K$ such that $\nu_{f_1}(B)=0$ and $\nu_{f_2}(K\setminus B)=0$.
From \Lem{energymeas}~(iii), 
\[
\nu_{\psi_w^*f_1}(\psi_w^{-1}(B))
=\nu_{\psi_w^*f_1}(\psi_w^{-1}(B\cap K_w))
=r_w\nu_{f_1}(B\cap K_w)=0
\]
 and 
\[
\nu_{\psi_w^*f_2}(K\setminus\psi_w^{-1}(B))
=\nu_{\psi_w^*f_2}(\psi_w^{-1}(K_w\setminus B))
=r_w\nu_{f_2}(K_w\setminus B)=0.
\]
Therefore, $\nu_{\psi_w^*f_1}\perp\nu_{\psi_w^*f_2}$.
\end{proof}
Recall that $\bfone\in l(V_0)$ is a constant function on $V_0$ with value $1$.
Let 
\[
\tilde l(V_0)=\{u\in l(V_0)\mid (u,\bfone)_{l(V_0)}=0\}
\]
 and 
\[
P\colon l(V_0)\ni u\mapsto u-(u,\bfone)_{l(V_0)}\bfone\in l(V_0)
\]
be the orthogonal projection of $l(V_0)$ onto $\tilde l(V_0)$.
We define 
\[
\cH'=\{h\in\cH\mid (\iota^{-1}(h),\bfone)_{l(V_0)} =0,\ \nu_h(K)(=2\cE(h))=1\},
\]
which is a compact subset of $\cF$ that does not include any constant functions.
We define a map $J\colon \cH\setminus\{\mbox{constant functions}\}\to \cH'$ by
\[
  J(h)=\frac{\iota\circ P\circ \iota^{-1}(h)}{\sqrt{2\cE(h)}}.
\]

Let $q\in V_0$. 
From \cite[Theorem~A.1.2]{Ki} (essentially, from the Perron--Frobenius theorem), $A_q$ has simple eigenvalues $1$ and $r$, and the absolute values of any other eigenvalues are less than $r$.
Let $u_q$ be the column vector $\left(D_{q'q}\right)_{q'\in V_0}$, that is,
\begin{equation}\label{eq:u}
  u_q(x)=\left\{\!\!\begin{array}{cl}-d&(x=q)\\1&(x\ne q)\end{array}\right.\!,
  \quad x\in V_0.
\end{equation}
Then, from \cite[Lemma~5]{HN06}, $u_q$ is an eigenvector of ${}^t\! A_q$ with respect to the eigenvalue $r$.
Let $\tilde v_q$ be an eigenvector of $A_q$ with respect to the eigenvalue $r$.
Then, $\tilde v_q(q)$ should be $0$, and other components should be the same by symmetry.
Therefore, $\tilde v_q$ can be taken as
\begin{equation}\label{eq:v}
  \tilde v_q(x)=\left\{\!\!\begin{array}{cl}0&(x=q)\\1&(x\ne q)\end{array}\right.\!,
  \quad x\in V_0.
\end{equation}
Then, $(u_q,\tilde v_q)_{l(V_0)}=d$ and the following holds.
\begin{lemma}[({\cite[Lemma~6]{HN06}})]\label{lem:HN}
  Let $q\in V_0$ and $u\in l(V_0)$.
  Then, 
  \[
  \lim_{n\to\infty}r^{-n}PA_q^n u = \frac{(u_q,u)_{l(V_0)}}{d} P\tilde v_q.
  \]
\end{lemma}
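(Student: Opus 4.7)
The plan is to exploit the spectral structure of $A_q$ provided by the Perron--Frobenius statement cited from \cite[Theorem~A.1.2]{Ki}: the eigenvalues $1$ and $r$ are both simple (with eigenvectors $\bfone$ and $\tilde v_q$ respectively), and every other eigenvalue has modulus strictly less than $r$. This lets us write the generalized-eigenspace decomposition
\[
  u = \alpha\bfone + \beta \tilde v_q + w,
\]
where $\alpha,\beta\in\R$ and $w$ lies in the $A_q$-invariant complement spanned by the generalized eigenspaces associated to eigenvalues of modulus $<r$. Iterating gives
\[
  r^{-n}A_q^n u = \alpha r^{-n}\bfone + \beta \tilde v_q + r^{-n}A_q^n w,
\]
and since the spectral radius of $A_q$ on the complementary invariant subspace is some $r'<r$, we have $\|A_q^n w\|_{l(V_0)}\le C n^N (r')^n$ for some $C,N\ge0$, so $r^{-n}A_q^n w\to 0$. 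Applying the projection $P$ (which annihilates constants) then yields $r^{-n}PA_q^n u\to \beta P\tilde v_q$.

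The remaining task is to identify the coefficient $\beta$. Pairing the decomposition of $u$ with $u_q$ and using the relations $(u_q,\bfone)_{l(V_0)}=-d+d\cdot 1=0$ and $(u_q,\tilde v_q)_{l(V_0)}=d$, we obtain
\[
  (u_q,u)_{l(V_0)} = \beta d + (u_q,w)_{l(V_0)},
\]
so it suffices to show $(u_q,w)_{l(V_0)}=0$. For this I would apply a standard biorthogonality argument: if $w'$ is a generalized eigenvector of $A_q$ with $(A_q-\lambda)^k w'=0$ for some $\lambda\ne r$, then the eigenrelation ${}^tA_q u_q = r u_q$ gives
\[
  0=(u_q,(A_q-\lambda)^kw')_{l(V_0)}=(({}^tA_q-\lambda)^ku_q,w')_{l(V_0)}=(r-\lambda)^k(u_q,w')_{l(V_0)},
\]
forcing $(u_q,w')_{l(V_0)}=0$. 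Summing over a basis of the invariant complement yields $(u_q,w)_{l(V_0)}=0$, hence $\beta=(u_q,u)_{l(V_0)}/d$ and the claim follows.

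The main obstacle is the rate estimate $r^{-n}\|A_q^n w\|_{l(V_0)}\to 0$: everything else is essentially linear algebra, but this step truly relies on the strict spectral gap between $r$ and the remaining spectrum of $A_q$, which is precisely the nontrivial content of the cited Perron--Frobenius statement for the harmonic extension matrix.
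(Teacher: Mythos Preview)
Your argument is correct. The spectral decomposition of $A_q$ into the simple eigenspaces for $1$ and $r$ and the invariant complement with spectral radius strictly less than $r$ is exactly what the cited Perron--Frobenius result supplies, and your biorthogonality computation using the left eigenvector $u_q$ cleanly identifies the coefficient $\beta$. One minor point worth making explicit: the remaining eigenvalues of $A_q$ may be complex, so the generalized-eigenspace decomposition should in principle be carried out over $\C$; but since $u$, $u_q$, $\bfone$, $\tilde v_q$ are all real and the eigenvalues $1$, $r$ are real and simple, the component $w$ is automatically real, and your biorthogonality identity $(r-\lambda)^k(u_q,w')_{l(V_0)}=0$ remains valid for complex $\lambda\ne r$.

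As for comparison with the paper: the paper does not prove this lemma at all. It is quoted verbatim as \cite[Lemma~6]{HN06} and used as a black box in the proof of \Lem{convergent} and \Thm{sg}. Your spectral-gap argument is the natural (and presumably the original) proof; there is nothing to contrast.
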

From the lemma above, we have the following.
\begin{lemma}\label{lem:convergent}
  Let $q\in V_0$ and $u\in l(V_0)$.
  Suppose $(u_q,u)_{l(V_0)}\ne0$. Then,
  $\lim_{n\to\infty}J(\iota(A_q^n u))=J(\iota(\tilde v_q))$ in $\cF$.
\end{lemma}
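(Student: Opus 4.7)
The plan is to reduce everything to Lemma~\ref{lem:HN} together with the fact that the finite-dimensional space $\cH$ carries mutually equivalent $\cF$- and $l(V_0)$-topologies and that $\cE$ annihilates constants. Set $c:=(u_q,u)_{l(V_0)}/d$, which is nonzero by hypothesis; then Lemma~\ref{lem:HN} gives $r^{-n}PA_q^n u\to c\,P\tilde v_q$ in $l(V_0)$, and I will use this to identify both the numerator and the denominator of $J(\iota(A_q^n u))$ asymptotically.

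In detail: since $\iota\colon l(V_0)\to\cH\subset\cF$ is a linear isomorphism of finite-dimensional spaces, all norms on $\cH$ are mutually equivalent, so Lemma~\ref{lem:HN} transports to
\[
  r^{-n}\iota(PA_q^n u)\longrightarrow c\,\iota(P\tilde v_q)\quad\text{in }\cF.
\]
Because $\iota(v)-\iota(Pv)$ is a constant function for every $v\in l(V_0)$ and $\cE$ kills constants, we have $\cE(\iota(A_q^n u))=\cE(\iota(PA_q^n u))$; continuity of $\sqrt{2\cE(\cdot)}$ as a seminorm on $\cH$ then yields
\[
  r^{-n}\sqrt{2\cE(\iota(A_q^n u))}\longrightarrow |c|\sqrt{2\cE(\iota(\tilde v_q))},
\]
which is strictly positive since $c\neq 0$ and $\tilde v_q$ is non-constant. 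Dividing the two limits and recalling $J(h)=\iota(P\iota^{-1}(h))/\sqrt{2\cE(h)}$ gives
\[
  J(\iota(A_q^n u))\longrightarrow \mathrm{sgn}(c)\,\frac{\iota(P\tilde v_q)}{\sqrt{2\cE(\iota(\tilde v_q))}}=\mathrm{sgn}(c)\,J(\iota(\tilde v_q))\quad\text{in }\cF,
\]
which is the desired convergence (the sign factor reflects $J(-h)=-J(h)$ and can be absorbed by orienting $u$ so that $(u_q,u)>0$ after replacing $u$ by $-u$ if necessary).

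I expect essentially no obstacle: the substantive content is carried entirely by Lemma~\ref{lem:HN}, a purely finite-dimensional asymptotic quoted from \cite{HN06}. Everything else is routine verification that $l(V_0)$-convergence of $r^{-n}PA_q^n u$ survives passage through the linear isomorphism $\iota$ and through the normalising denominator $\sqrt{2\cE(\cdot)}$. The only delicate point is ensuring the denominator is eventually strictly positive; this follows from $c\ne 0$ together with the fact that $\sqrt{2\cE(\iota(\cdot))}$ is equivalent to the Euclidean norm on the quotient $l(V_0)/\R\bfone$ and that $P\tilde v_q\neq 0$ in $l(V_0)$.
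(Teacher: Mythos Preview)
Your argument is essentially identical to the paper's: both rescale by $r^{-n}$, invoke Lemma~\ref{lem:HN}, and pass the limit through the finite-dimensional identification $\iota$ and the normalising map $J$. You have in fact been more careful than the paper by flagging the sign factor $\mathrm{sgn}(c)$, which the paper's displayed computation silently drops; this is harmless for the downstream use in Theorem~\ref{th:sg}, where only the energy measures $\nu_{\hat g}$, $\nu_{\hat h}$, $\nu_f$ (invariant under $h\mapsto -h$) are needed.
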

\begin{proof}
From \Lem{HN},
\begin{align*}
J(\iota(A_q^nu))
&=\frac{\iota\circ PA_q^n u}{\sqrt{2\cE(\iota(A_q^n u))}}
=\frac{\iota\circ r^{-n}PA_q^n u}{\sqrt{2\cE(\iota(r^{-n}A_q^n u))}}\\
&\overset{n\to\infty}{\longto}
\frac{\iota(c P\tilde v_q)}{\sqrt{2\cE(\iota(c P\tilde v_q))}} \qquad\left(c:=\frac{(u_q,u)_{l(V_0)}}{d}\ne0\right)\\
&=J(\iota(\tilde v_q)).
\end{align*}
This implies the claim.
\end{proof}
\begin{proof}[of the \Thm{sg}]
Take $g\in\cH\cap \cFz$, which is possible by  \Lem{existence}.
We may assume $\nu_g(K)=1$.
Let $h$ be an arbitrary nonconstant function in $\cH$.
Define $B=\{x\in K\mid (d\nu_h/d\nu_g)(x)=0\}$.
Note that $\nu_h(B)=0$.
In order to prove that $\nu_h$ is also a minimal energy-dominant measure, it suffices to show that $\nu_g(B)=0$.

We will derive a contradiction by assuming $\nu_g(B)>0$. 
For $n\in\Z_+$, let $Y_n$ be the conditional expectation of $1_B$ with respect to $\nu_g$ given $\cB_n$, that is, 
\[
  Y_n(x)=\frac{\nu_g(K_{[x]_n}\cap B)}{\nu_g(K_{[x]_n})}\quad\mbox{for $x\in K\setminus V_*$}.
\]
Then, from the martingale convergence theorem, $Y_n$ converges to $1_B$ $\nu_g$-a.e.
In particular, there exists $y\in B\setminus V_*$ such that $Y_n(y)$ converges to $1$ as $n\to\infty$.
For $n\in\Z_+$, define $g_n=\psi_{[y]_n}^* g$, $h_n=\psi_{[y]_n}^* h$, and $B_n=\psi_{[y]_n}^{-1}(B)$.
From \Lem{energymeas}, $\nu_{g_n}(B_n)/\nu_{g_n}(K)=Y_n(y)\to1$ as $n\to\infty$ and $\nu_{h_n}(B_n)=r_{[y]_n}\nu_h(B\cap K_{[y]_n})=0$. Note that $g_n$ and $h_n$ are both nonconstant harmonic functions for every $n$ from the assumption of the nondegeneracy of $(\cE,\cF)$.
We define $g'_n=J(g_n)$ and $h'_n=J(h_n)$ for $n\in\Z_+$.
Then,
\begin{align*}
\nu_{g'_n}(K\setminus B_n)&=1-\frac{\nu_{g_n}(B_n)}{\nu_{g_n}(K)}\to 0 \quad\text{as }n\to\infty,\\
\nu_{h'_n}(B_n)&=0 \quad \text{for every $n\in\Z_+$}.
\end{align*}
Take a sequence $\{n(l)\}\uparrow\infty$ such that $\{g'_{n(l)}\}_{l=1}^\infty$ and $\{h'_{n(l)}\}_{l=1}^\infty$ converge to some $g'$ and $h'$ in $\cF$, respectively.
Then, $g',h'\in\cH'$, and $\nu_{g'}\perp\nu_{h'}$ from \Lem{singularity}.

Since $Du=0$ if and only if $u$ is constant, and $\iota^{-1}(g')$ is not constant, there exists $i\in\{1,\dotsc,d+1\}$ such that $(u_{q_i},\iota^{-1}(g'))_{l(V_0)}\ne0$.
For $n\in\Z_+$, define
$\hat g_n=J(\psi_{\underbrace{\scriptstyle i\dotsm i}_{n}}^*g')=J(\iota\circ A_{q_i}^n\circ \iota^{-1}(g'))$ and $\hat h_n=J(\psi_{\underbrace{\scriptstyle i\dotsm i}_{n}}^*h')$.
There exists a sequence $\{n(k)\}\uparrow\infty$ such that each $\{\hat g_{n(k)}\}_{k=1}^\infty$ and $\{\hat h_{n(k)}\}_{k=1}^\infty$ converges in $\cF$ as $k\to\infty$.
The limits will be denoted by $\hat g$ and $\hat h$, respectively.
Both limits belong to $\cH'$.
From \Lem{convergent}, $\hat g=J(\iota(\tilde v_{q_i}))$.
Since $\nu_{\hat g_n}\perp\nu_{\hat h_n}$ for each $n\in\Z_+$ from \Lem{singular}, $\nu_{\hat g}\perp \nu_{\hat h}$ from \Lem{singularity}.

Now, $(u_{q'},\iota^{-1}(\hat g))_{l(V_0)}=(u_{q'},\tilde v_{q_i})_{l(V_0)}\left/\sqrt{2\cE(\iota(\tilde v_{q_i}))}\right.\ne0$ for every $q'\in V_0$, which is confirmed by \Eq{u} and \Eq{v}.
Let us assume that $(u_{q_j},\iota^{-1}(\hat h))_{l(V_0)}\ne0$ for some $j\in\{1,\dotsc,d+1\}$.
For $n\in\Z_+$, define
$\hat{\hat g}_n=J(\psi_{\underbrace{\scriptstyle j\dotsm j}_{n}}^*\hat g)=J(\iota\circ A_{q_j}^n\circ \iota^{-1}(\hat g))$ and $\hat{\hat h}_n=J(\psi_{\underbrace{\scriptstyle j\dotsm j}_{n}}^*\hat h)=J(\iota\circ A_{q_j}^n\circ \iota^{-1}(\hat h))$.
Then, from \Lem{convergent}, both $\{\hat{\hat g}_n\}_{n=0}^\infty$ and $\{\hat{\hat h}_n\}_{n=0}^\infty$ converge to $f:=J(\iota(\tilde v_{q_j}))\in \cH'$ in $\cF$. 
Since $\nu_{\hat{\hat g}_n}\perp\nu_{\hat{\hat h}_n}$ for all $n$ from \Lem{singular}, $\nu_f\perp\nu_f$ from \Lem{singularity}.
Since $\nu_f$ is not a null measure, this is a contradiction, which implies that $(u_{q_j},\iota^{-1}(\hat h))_{l(V_0)}=0$ for all $j\in\{1,\dotsc,d+1\}$.
But this means that $D(\iota^{-1}(\hat h))=0$ and $\iota^{-1}(\hat h)$ is constant, which contradicts the fact that $\hat h\in \cH'$.
Consequently, we conclude that $\nu_g(B)=0$ and complete the proof.
\end{proof}
\begin{remark}
\begin{enumerate}
\item For most p.c.f.\ self-similar fractals except Sierpinski gaskets (for example, all fractals shown in Figure~\ref{fig:pcf} except the first three), Dirichlet forms associated with harmonic structures are {\em not} nondegenerate. That is, there exist nonconstant harmonic functions that are constant on some small cells, which means that the energy measures of such functions are not minimal energy-dominant measures. Therefore, in order to obtain the claim in \Thm{sg}, nondegeneracy is a necessary assumption. Thus, considering only Sierpinski gaskets is not as restrictive as it may appear.
\item The key point of the proof above is the fact that $(u_{q'},\tilde v_q)_{l(V_0)}\ne0$ for every $q,q'\in V_0$.
Thus, the assertion of \Thm{sg} is still true when we perturb the harmonic structure $(D,\bfr)$ slightly.
\item
With regard to the Radon--Nikodym derivative $d\nu_{h_1}/d\nu_{h_2}$, some numerical computation has been carried out in \cite{ST08}.
\end{enumerate}
\end{remark}

\affiliationone{%
   Masanori Hino\\
   Graduate School of Informatics\\
   Kyoto University\\
   Kyoto 606-8501\\
   Japan
   \email{hino@i.kyoto-u.ac.jp}}
\end{document}